\newtheorem{defi}{Definition}[section]
\newtheorem{theo}{Theorem}[section]
\newtheorem{lem}{Lemma}[section]
\newtheorem{prop}{Proposition}[section]
\newtheorem{cor}{Corollary}[prop]
\newtheorem{con}{Conjecture}[section]
\newcommand{\Grz}{\mbox{\textbf{Grz}}}
\newcommand{\KM}{\mbox{\textbf{KM}}}
\newcommand{\KMt}{\textbf{KM}_{\tau}}
\newcommand{\Int}{\mbox{\textbf{Int}}}
\newcommand{\GL}{\mbox{\textbf{GL}}}
\newcommand{\Intau}{\textbf{Int}_{\tau}}
\newcommand{\Intaut}{\textbf{Int}_{\tau\sim}}
\newcommand{\Lan}{\mbox{$\mathcal{L}$}}
\newcommand{\Lana}{\mbox{$\mathcal{L}_{a}$}}
\newcommand{\Lantau}{\mbox{$\mathcal{L}_{\tau}$}}
\newcommand{\Lantaut}{\mbox{$\mathcal{L}_{\tau\sim}$}}
\newcommand{\Var}{\textbf{\textit{Var}}}
\newcommand{\fm}{\mathfrak{F}}
\newcommand{\de}[1]{\delta#1}
\newcommand{\Ho}{\mathbf{H}}
\newcommand{\Su}{\mathbf{S}}
\newcommand{\Pro}{\mathbf{P}}
\newcommand{\true}{\bm{\top}}
\newcommand{\fA}{\mbox{$\mathfrak{A}$}}
\newcommand{\fB}{\mbox{$\mathfrak{B}$}}
\newcommand{\fC}{\mbox{$\mathfrak{C}$}}
\newcommand{\cA}{\mbox{$\mathcal{A}$}}
\newcommand{\cD}{\mbox{$\mathcal{D}$}}
\newcommand{\cE}{\mbox{$\mathcal{E}$}}
\newcommand{\cV}{\mbox{$\mathcal{V}$}}
\newcommand{\direct}{\mbox{$\overset{\ra}{\fA}$}}
\newcommand{\bd}[1]{\mbox{$\delta[#1]$}}
\newcommand{\SA}{\mathcal{S}_{\mathfrak{A}}}
\newcommand{\SB}{\mathcal{S}_{\mathfrak{B}}}
\newcommand{\HA}[1]{\textsf{H}(#1)}
\newcommand{\dirlim}{\mbox{$\overset{\ra}{\fA}$}}
\newcommand{\ta}{\tau}
\newcommand{\om}{\mbox{$\omega$}}
\newcommand{\ome}{\mbox{$\omega$}}
\newcommand{\ve}{\vee}
\newcommand{\we}{\wedge}
\newcommand{\ra}{\rightarrow}
\newcommand{\Ra}{\Rightarrow}
\newcommand{\bn}{\mbox{$\sim$}}
\newcommand{\sbe}{\subseteq}%%%inclusion
\newcommand{\fsub}{\Subset}%%%finite subset of a set
\newcommand{\suba}{\preccurlyeq}%%% subalgebra
\newcommand{\pack}{\mbox{$\vartriangleleft$}}
\newcommand{\hA}[1]{h_{\mathfrak{A}}(#1)}
\newcommand{\hB}[1]{h_{\mathfrak{B}}(#1)}
\newcommand{\on}{\boldsymbol{1}}
\newcommand{\ze}{\boldsymbol{0}}
\newcommand{\astar}{\mbox{$a^{\ast}$}}
\newcommand{\set}[2]{\{#1 ~|~#2\}}
\newcommand{\noinA}[1]{h_{\mathfrak{A}}(\overline{#1})}
\newcommand{\noinB}[1]{h_{\mathfrak{B}}(\overline{#1})}
\begin{abstract}
%Every paper must contain an abstract. 

%This paper, read together with its source \texttt{example3.tex}, gives a model of how to format papers for submission to \textit{Studia Logica}.  
%You can use \textit{some italic words} in the abstract. 

%And a new paragraph is possible.
%\end{abstract}
\begin{document}
	\title[On one embedding]{On one embedding of Heyting algebras}

	\author[A. Muravitsky]{Alexei Muravitsky}
	\address{Louisiana Scholars' College\\
Northwestern State University\\
Natchitoches, U.S.A.} 
\email{alexeim@nsula.edu}

\keywords{the G\"{o}del-McKinsey-Tarski embedding theorem, the lattice of normal extensions of S4, the lattice of intermediate logics,
	Heyting algebra, S4-algebra}
\subjclass[2010]{Primary: 03B45, Secondary: 03B55, 03G10}	
	
	\maketitle

%\abstract{The paper is devoted to an algebraic interpretation of Kuznetsov's %theorem which establishes the assertoric equipollence of intuitionistic and %proof-intuitionistic propositional calculi. Given a Heyting algebra, we %define an enrichable Heyting algebra, in which the former algebra is %embedded. Moreover, we show that both algebras generate one and the same %variety of Heyting algebras. This algebraic result is equivalent to the %Kuznetsov theorem. The proposed construction of the enrichable %``counterpart'' of a given Heyting algebra allows one to observe that some %properties of the original algebra are preserved by this embedding in the %counterpart algebra.\\

%	{\small {2010 Mathematics Subject Classification:} Primary: 03B45,
%		Secondary: 03B55, 03G10}
%	\keywords{the G\"{o}del-McKinsey-Tarski embedding theorem, the lattice of %normal extensions of S4, the lattice of intermediate logics,
%		Heyting algebra, S4-algebra.}}

\begin{abstract}
		The paper is devoted to an algebraic interpretation of Kuznetsov's theorem which establishes the assertoric equipollence of intuitionistic and proof-intuitionistic propositional calculi. Given a Heyting algebra, we define an enrichable Heyting algebra, in which the former algebra is embedded. Moreover, we show that both algebras generate one and the same variety of Heyting algebras. This algebraic result is equivalent to the Kuznetsov theorem. The proposed construction of the enrichable ``counterpart'' of a given Heyting algebra allows one to observe that some properties of the original algebra are preserved by this embedding in the counterpart algebra.
\end{abstract}

%\bigskip{\it Dedicated to the memory of Alexander Kuznetsov, 1926--1984}

\medskip

\section{The history of one question}
The present paper is about an algebraic interpretation of the main theorem of~\cite{kuz85b} by Alexander Kuznetsov. This theorem reads:
\begin{equation*}
\Int + A\vdash B ~\Longleftrightarrow ~\KM + A\vdash
B, \tag{\emph{Kuznetsov's Theorem}}
\end{equation*}
where $\Int$ and $\KM$ are intuitionistic propositional calculus and proof-intuitionistic calculus, respectively, and $A$ and $B$ are assertoric (i.e. modality-free) propositional formulas. This might seem not very impressive,
if we would not know that Kuznetsov's Theorem was one of the two key properties which helped establish~\cite{km86} that the lattices of the normal extensions of $\Grz$ (Grzegorczyk logic), of $\GL$ (provability logic) and of the aforementioned logics $\Int$ and $\KM$ are connected by the following commutative diagram:
\[
\xymatrix{
	\text{NE}\KM \ar@<1ex>[r]^\tau \ar@<1ex>[r];[]^\rho
	\ar[d]_{\lambda} &\text{NE}\GL\ar[d]^{\mu}\\
	\text{NE}\Int \ar@<1ex>[r]^\sigma  \ar@<1ex>[r];[]^{\sigma^{-1}} &\text{NE}\Grz	
}
\]
where $\tau$ and $\rho$ are lattice isomorphisms and inverses of one another,\footnote{The definitions of $\tau$ and $\rho$ can be found in~\cite{mur85} or in~\cite{mur14a}, section 7.4.8.} $\lambda$ and $\mu$ are meet epimorphisms\footnote{See definitions in~\cite{km86} or in~\cite{mur14a}, section 7.4.8.} and $\sigma$ denotes the Blok-Esakia isomorphism. Kuznetsov's Theorem is responsible for $\lambda$ to be a meet epimorphism which makes the above diagram commute. If one seeks to find a relationship between modal propositional systems on classical and intuitionistic bases, a diagram like that, we believe, gives a right view.\footnote{Kuznetsov's Theorem was generalized in~\cite{mur15b}, Proposition 4.2. As a consequence of this generalization, the above diagram has recently been extended; cf.~\cite{mur15a}.} 
\subsection{Syntactic background}\label{S:syntactic}
We will, at first, be dealing with \textit{formulas} (alias \textit{terms}) of two propositional languages, $\Lana$ and $\Lan_{\square}$. The language $\Lana$ is grounded on a denumerable set $\Var$ of propositional variables and the logical constants: $\wedge,\vee,\ra$ and $\neg$. Unspecified $\Lana$-formulas will be denoted by $A,~B,\ldots$ We obtain $\Lan_{\square}$ by adding modality $\square$ to the logical constants of $\Lana$.  Regarding the sets of $\Lana$- and $\Lan_{\square}$-formulas as algebras, we obtain the formula algebras $\fm_a$ and $\fm_{\square}$, respectively.  In Section~\ref{S:localization-proof-theoretic}, we will introduce two more extensions of $\Lana$.
Let $\Lan$ be a propositional language which is an extension (not necessarily proper) of $\Lana$. A homomorphism of a formula algebra $\fm_{\mathcal{L}}$ into $\fm_{\mathcal{L}}$ is called a \textit{substitution}. Metavariables for $\Lan$-formulas (or $\Lan$-terms) are $\alpha, \beta,\ldots$. As usual, we denote:
\[
\alpha\leftrightarrow\beta:=(\alpha\rightarrow\beta)\wedge(\beta\rightarrow\alpha).
\]

Now $\Int$ can be defined as an $\Lana$-system given by any suitable axioms for intuitionistic propositional logic
(see, e.g.,~\cite{church1956}, {\S} 26) and two inference rules, (uniform) substitution and modus ponens. $\KM$ is defined as an $\Lan_{\square}$-system by the axioms and inference rules of $\Int$ plus the following three formulas:
\begin{equation}\label{E:KM-axioms}
p\ra\square p, ~~~(\square p\ra p)\ra p, ~~~\square p\ra(q\vee(q\ra p)),
\end{equation}
where $p$ and $q$ are two distinct variables of $\Var$.\footnote{This axiomatization of $\KM$ differs from the original one; see~\cite{mur14a}, section 7.4.1.}

\subsection{Semantic background}
We assume that the reader is familiar with the notion of Heyting algebra and with the basic properties of those algebras. (See, e.g.,~\cite{rs70}, where those algebras are call pseudo-Boolean.)
\begin{defi}[$\KM$-algebra]\label{D:KM-algebra}
	An algebra $\fA=\langle\cA,\wedge,\vee,\ra,\square,\ze,\on\rangle$ is said to be a {\em$\KM$}-algebra if $\langle\cA,\wedge,\vee,\ra,\ze,\on\rangle$ is a Heyting algebra (the \textbf{Heyting reduct} of $\fA$), with a least and greatest elements $\ze$ (the zero) and $\on$ (the unit), respectively,
	and the unary operation $\square$ satisfies the following conditions:
	{\em\[
	\begin{array}{cl}
	(\text{a}) & x\le \square x,\\
	(\text{b}) & \square x\ra x\le x,\\
	(\text{c}) & \square x\le y\vee(y\ra x).
	\end{array}
	\]}
	The universe $\cA$ will often be denoted by $|\fA|$.
\end{defi}

As was noted in~\cite{mur90}, if $\fA$ is a $\KM$-algebra, the operation $\square$ is defined in $\fA$ uniquely. This gives rise to the following definition.
\begin{defi}[enrichable element, enrichable Heyting algebra]\label{D:enrichable}
	Let $\fA$ be a Heyting algebra and $a,b\in|\fA|$. We say that $b$ enriches $a$ or $a$ is enriched with $b$ if the following conditions are satisfied in $\fA$:
	{\em\[
	\begin{array}{cl}
	(\text{a}) & a\le b,\\
	(\text{b}) & b\ra a=a,\\
	(\text{c}) & b\le x\vee(x\ra a),~\textit{for any $x\in|\fA|$}.
	\end{array}
	\]}
	A Heyting algebra is called enrichable if every element of this algebra is enrichable.
\end{defi}
\begin{prop}\label{P:unique}
	Let $\fA$ be a Heyting algebra and $a\in|\fA|$. If $a$ is enrichable in $\fA$, it can be enriched with only one element.
\end{prop}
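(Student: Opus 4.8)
The plan is to prove uniqueness by a short antisymmetry argument, the only real idea being a well-chosen instantiation of the universally quantified clause (c) in Definition~\ref{D:enrichable}. Suppose that both $b_1$ and $b_2$ enrich $a$ in $\fA$. It suffices to establish $b_1 \le b_2$, since then interchanging the roles of $b_1$ and $b_2$ yields the reverse inequality, and antisymmetry of the lattice order gives $b_1 = b_2$.

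To obtain $b_1 \le b_2$, I would apply clause (c) for $b_1$ to the particular element $x = b_2$, which is legitimate precisely because (c) holds for \emph{every} $x \in |\fA|$. This gives $b_1 \le b_2 \vee (b_2 \ra a)$. I then simplify the right-hand side using the hypotheses on the competing element $b_2$: clause (b) for $b_2$ gives $b_2 \ra a = a$, and clause (a) for $b_2$ gives $a \le b_2$, whence $b_2 \vee a = b_2$. Chaining these, $b_1 \le b_2 \vee (b_2 \ra a) = b_2 \vee a = b_2$, as required.

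The entire argument hinges on recognizing that (c) may be fed the rival enriching element; once this substitution is made, everything collapses through clauses (a) and (b). Consequently I expect no genuine obstacle here — the statement is essentially the algebraic shadow of the uniqueness of $\square$ in a $\KM$-algebra remarked upon after Definition~\ref{D:KM-algebra}. The only point demanding care is bookkeeping about which element each clause is invoked for, namely clause (c) for $b_1$ and clauses (a), (b) for $b_2$ (and then the symmetric swap).
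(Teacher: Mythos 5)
Your proof is correct and is essentially identical to the paper's own argument: both instantiate clause (c) for one enriching element at the rival element, then collapse via $b_2 \ra a = a$ and $a \le b_2$ to get $b_1 \le b_2 \vee (b_2 \ra a) = b_2 \vee a = b_2$, concluding by symmetry. No gap and no difference in approach.
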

\begin{proof}
	For contradiction, we suppose that elements $b$ and $b^\prime$ enrich $a$, that is the properties (\text{a})--(\text{c}) of Definition~\ref{D:enrichable} are true for $b$ and $b^\prime$.
	Then we obtain:
	\[
	b\leq b^{\prime}\ve (b^{\prime}\ra a)=b^{\prime}\ve a=b^{\prime}.
	\]
	
	Similarly, we get $b^{\prime}\leq b$.
\end{proof}

The next observation uses the notion of a dense element of Heyting algebra; see definition, e.g., in~\cite{rs70}, chapter IV, {\S} 5.
\begin{prop}\label{P:negation} If $a$ is enriched in
	{\fA} with $b$, then the latter is dense; that is $\neg b=\ze$.
\end{prop}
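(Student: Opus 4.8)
The plan is to show directly that $\neg b = \ze$ by unpacking the definition of negation in a Heyting algebra, namely $\neg b = b \ra \ze$, and proving that this element equals $\ze$. The natural strategy is to exploit condition (c) of Definition~\ref{D:enrichable}, which says $b \le x \ve (x \ra a)$ for \emph{every} $x \in |\fA|$, by choosing a clever instantiation of $x$. The obvious candidate to try first is $x = \neg b = b \ra \ze$, since substituting the very element we want to control back into the universal condition is the standard trick for squeezing out an equation.

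First I would set $x := \neg b$ in condition (c), obtaining
\[
b \le \neg b \ve (\neg b \ra a).
\]
Then I would meet both sides with $\neg b$. On the left, $b \we \neg b = \ze$ by the defining property of Heyting negation. On the right I would use distributivity of $\we$ over $\ve$ (valid in any Heyting algebra) to rewrite $\neg b \we (\neg b \ve (\neg b \ra a))$; since $\neg b \we \neg b = \neg b$, this should collapse so that the inequality forces a relationship pinning down $\neg b$. The aim is to arrive at $\neg b \le \ze$, which together with $\ze \le \neg b$ gives $\neg b = \ze$.

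The step I expect to be the main obstacle is managing the term $\neg b \we (\neg b \ra a)$ and confirming it reduces to something that, combined with the $b \we \neg b = \ze$ computation, actually yields $\neg b \le a$ or directly $\neg b = \ze$. Here I would lean on property (b), $b \ra a = a$, and on the standard Heyting identity $\neg b \we (\neg b \ra a) \le a$ (an instance of $u \we (u \ra v) \le v$). If I can show $\neg b \le a$, then combining with (a), $a \le b$, gives $\neg b \le b$, whence $\neg b = \neg b \we b = \ze$. So the crux is to derive $\neg b \le a$ from the instantiated condition (c); once that inequality is in hand, the conclusion follows by a short chain using (a) and the absorption of $\neg b$ against $b$. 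I would double-check each Heyting-algebra identity invoked against the basic properties assumed in~\cite{rs70}.
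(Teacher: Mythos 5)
Your overall skeleton is right at the ends --- the target inequality $\neg b\le a$, followed by ``(a) gives $\neg b\le b$, hence $\neg b=\neg b\we b=\ze$'' --- is exactly how the paper finishes. But the route you propose to reach $\neg b\le a$ does not work, and you never actually close it. Instantiating condition (c) at $x=\neg b$ gives $b\le \neg b\ve(\neg b\ra a)$; meeting both sides with $\neg b$ then produces
\[
\ze \;=\; b\we\neg b \;\le\; \neg b\we\bigl(\neg b\ve(\neg b\ra a)\bigr)
\;=\;\neg b\,\ve\,\bigl(\neg b\we(\neg b\ra a)\bigr),
\]
and an inequality with $\ze$ on the small side is vacuous --- it pins down nothing. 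The inequality points the wrong way for your purpose, so no amount of distributing or invoking $u\we(u\ra v)\le v$ will extract $\neg b\le a$ from it: using $\neg b\we(\neg b\ra a)\le a$ to conclude $\neg b\le a$ would require $\neg b\le\neg b\ra a$, which is \emph{equivalent} to $\neg b\le a$, so that fallback is circular. In short, you have correctly identified the crux but left it unproved, and the tool you chose (condition (c)) is a dead end.

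The missing observation is that condition (c) is not needed at all: since $\ze\le a$ and $\ra$ is monotone in its second argument, $\neg b = b\ra\ze \le b\ra a$, and condition (b) gives $b\ra a=a$, hence $\neg b\le a$ immediately. This is precisely the paper's proof: $\neg b\le b\ra a \Leftrightarrow \neg b\le a \Rightarrow \neg b\le b \Leftrightarrow \neg b\le\ze$, using only (a) and (b). If you repair your attempt by replacing the instantiation of (c) with this one-line monotonicity step, the rest of your argument goes through verbatim.
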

\begin{proof} Indeed,
	\[
	\begin{array}{rl}
	\neg b\leq b\ra a & \Leftrightarrow \neg b\leq a\\
	& \Rightarrow \neg b\leq b\\
	& \Leftrightarrow \neg b\leq\ze.
	\end{array}
	\]
\end{proof}

We note that not all Heyting algebras are enrichable. For instance, the least element of a chain of type $1+\omega^{\ast}$ is not enrichable. However, every finite Heyting algebra is enrichable; cf.~\cite{mur14a}, Proposition 15. Thus any variety of Heyting algebras contains enrichable members.

It follows directly from Kuznetsov's Theorem that any variety of Heyting algebras is generated by those algebras in it that are enrichable. (Cf.~\cite{kuz85b}, Corollary 1.) In fact, the last statement is equivalent to Kuznetsov's Theorem.\footnote{The two are equivalent not just because both are true, but deductively equivalent in a higher order logic.} However, Kuznetsov states another equivalent of his Theorem:
\begin{quote}
	Every Heyting algebra $\fA$ is a subalgebra (up to isomorphism) of some enrichable Heyting algebra in the variety generated by $\fA$.\\ (Cf.~\cite{kuz85b}, Corollary 2.\label{Kuznetsov's-Corollary-2}) 
\end{quote}
The last observation was pointed out to Kuznetsov by the author and its proof can be found, e.g., in~\cite{mur08}, Remark 3. Thus, according to this observation, for any Heyting algebra $\fA$, there is an enrichable Heyting algebra $\fB$ such that
\begin{center}
	\begin{tabular}{cl}
		(A) &$\fA$ is embedded into $\fB$ and\\
		(B) &$\fA$ and $\fB$ generate one and the same variety.
	\end{tabular}
\end{center}

What do we know about $\fB$, besides its existence? According to Remark 3 of~\cite{mur08}, if $K$ is the class of all enrichable algebras of the variety generated by $\fA$, then $\fA\in\Su\Ho\Pro(K)$. Grounding only on the last membership, that is, not having any transparent algebraic construction of $\fB$, it is hardly possible to answer some natural questions about properties which can be preserved in $\fB$. For instance, grounding only on this membership, we do not know whether $\fB$ can be countable, providing that $\fA$ is; or whether $\fB$ can be subdirectly irreducible, if $\fA$ is. 

In the remaining part of the paper, we show, given a Heyting algebra $\fA$, how to define such $\fB$ that the properties (\text{A})--(\text{B}) are fulfilled. In fact, one possible candidate for $\fB$ has already been proposed in~\cite{mur86}, where we constructed algebra $\direct$ (see definition in Section~\ref{S:algebraLimitA}) which possesses the property (A).\footnote{The algebra $\direct$ was employed in~\cite{mur86} to prove the separation property for the proof-intuitionistic calculus. Also, this algebra was used in our proof of  the interpolation property for $\KM$; cf.~\cite{mur14a}, Section 7.4.7.}\\

Let $\Lan$ be an extension of $\Lana$. Given $\Lan$-algebras  {\fA} and {\fB}, we write
\[
\fA\suba\fB
\]
if {\fA}  is a subalgebra (up to isomorphism) of  {\fB}.

We conclude this section with the following definition.
\begin{defi}[valuation, logic of algebra]
Let $\Lan$ be a propositional language which is an extension of $\Lana$ and $\fA$ be an $\Lan$-expansion of Heyting algebra. Any homomorphism $v:\fm_{\mathcal{L}}\ra\fA$ is called a valuation (in $\fA$). The logic of algebra $\fA$ is the set
\[
L(\fA)=\set{A\in\fm_{\mathcal{L}}}{v(A)=\on,~\text{for any valuation $v$ in $\fA$}}.
\]
\end{defi}
Given a language $\Lan$, an $\Lan$-algebra $\fA$ and any nonempty set $\Gamma$ of $\Lan$-formulas, we denote
\[
\fA\models\Gamma
\]
if $\Gamma\sbe L(\fA)$. And if $A$ is an $\Lan$-formula, we write
\[
\Gamma\not\models A,
\]
if there is an $\Lan$-algebra $\fA$ such that $\fA\models\Gamma$ and $\fA\not\models A$. Finally, we use
\[
\Gamma\models A
\]
in the usual sense:
\[
\fA\models\Gamma\Longrightarrow\fA\models A.
\]
\subsection{The structure of the present paper}
In Section~\ref{S:algebraLimitA}, given a Heyting algebra $\fA$, we define an algebra $\direct$ which will play in the sequel the role of $\fB$ in the above conditions (A) and (B). We show (referring chiefly to~\cite{mur86}) that $\direct$ satisfies (A). Also, we demonstrate  some preservation properties over transition from $\fA$ to $\dirlim$ and state the main theorem (Theorem~\ref{T:main-theorem}). In Section~\ref{S:reducibility}, we find a sufficient condition (Corollary~\ref{C:reducibility-4}) for the main theorem. This leads us to the idea of \textit{one-element enrichment}. In Section~\ref{S:localization-algebraic} we develop an algebraic view on one-element enrichment and in Section~\ref{S:localization-proof-theoretic-0} a proof-theoretic view on it. In Section~\ref{S:connecting-viewpoints} we connect these viewpoints; in the end of that section we explain what remains to be done to complete the proof of the main theorem. We are taking a decisive step in our proof in Section~\ref{S:Stone-embedding-properties-main}.
In Section~\ref{S:completing-proof} we make a final effort to complete the proof of Theorem~\ref{T:main-theorem}. Thus, as the reader can see, this paper is devoted to the proof of one theorem. In Section~\ref{S:conclusion} we formulate open questions about properties which $\direct$ may have, providing that $\fA$ possesses them.

\section{Algebra $\direct$}\label{S:algebraLimitA}
In this paper we deal mostly with Heyting algebras or algebras whose assertoric reduct is a Heyting algebra. When confusion is unlikely, the word ``Heyting'' will often be omitted. The following algebraic notions and facts  will be presupposed. The main references in this section are \cite{rs70},~\cite{gra79},~\cite{gor98}, and \cite{mur86}. We start with the following notions and notations.

\begin{itemize}
	\item Given a Heyting algebra $\fA$ and $X\subseteq|\fA|$, we denote by $[X)_{\mathfrak{A}}$ the filter of $\fA$ generated by the set $X$; it is well know that $[X)_{\mathfrak{A}}=\set{x}{y\le x,~\text{for some $y\in X$}}$.
	\item Given a Heyting algebra $\fA$, $\SA$ denotes both the set of all prime filters of $\fA$ and the poset $(\SA,\sbe)$.\footnote{We avoid
		the term \emph{Stone space} in this paper, because topology plays no part in
		our consideration.} The filters of an algebra $\fA$ are called $\fA$-\textbf{\textit{filters}}.
	\item Given an algebra $\fA$, $\HA{\SA}$ is the (Heyting) algebra of all upward sets of $\SA$. It is well known that
	the signature operations of $\HA{\SA}$ are defined as follows:
	\[
	\begin{array}{l}
	X\we Y=X\cap Y,\\
	X\ve Y=X\cup Y,\\
	X\ra Y=\set{F\in\SA}{\forall F^{\prime}\in\SA.~(F\sbe
		F^{\prime}\&F^{\prime}\in X)\Ra
		F^{\prime}\in Y},\\
	\neg X=X\ra\emptyset,\\
	\ze=\emptyset~\text{and}~\on=\SA.
	\end{array}
	\]
	\item Given an algebra $\fA$, the Stone embedding $h_{\mathfrak{A}}: \fA\ra \HA{\SA}$  is defined as follows: $h_{\mathfrak{A}}(x)=\set{F\in\SA}{x\in F}$. The isomorphic image of $\fA$ w.r.t. $h_{\mathfrak{A}}$ is denoted by $h_{\mathfrak{A}}[\fA]$. Thus $h_{\mathfrak{A}}[\fA]\suba\HA{\SA}$. Also, we denote
	\[
	h_{\mathfrak{A}}(\overline{x})=\set{F\in\SA}{x\not\in F},
	\]
	for any $x\in|\fA|$.
	Further, we will find the usefulness of the set
	\[
	\max\noinA{x},
	\]
	which consists of the maximal elements, if any, of the set $\noinA{x}$.\\
	
%We  will occasionally omit the subscript `$\fA$' in %`$h_{\mathfrak{A}}$' and write simply `$h$',  if confusion is %unlikely.
\end{itemize}

Now we will outline an algebraic construction defined in~\cite{mur86},  \S1.

We start with the definition of operation $\de$ ~in $\HA{\SA}$:  
\[
\de{X}=\set{F\in\SA}{(\forall F^{\prime}\in\SA)(F\subset
	F^{\prime}\Ra F^{\prime}\in X)}.
\]
In particular, for any particular $a\in\fA$,
\begin{equation}\label{E:delta-for-element}
\de{\hA{a}}=\set{F\in\SA}{(\forall F^{\prime}\in\SA)(F\subset
	F^{\prime}\Ra a\in F^{\prime})}.
\end{equation}

In the sequel, we will need the following two observations.
\begin{prop}[\cite{mur86}, Lemma 1]\label{P:delta-h(x)-1}
Let $\fA$ be a Heyting algebra. For any $x\in|\fA|$, $\de{\hA{x}}=\hA{x}\cup\max\noinA{x}$.	
\end{prop}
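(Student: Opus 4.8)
The plan is to prove the set equality $\de{\hA{x}} = \hA{x} \cup \max\noinA{x}$ by establishing containment in both directions, working directly from the definitions of $\de$, $\hA{x}$, and $\noinA{x}$ on the poset $(\SA, \sbe)$.

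First I would recall the relevant definitions explicitly. A prime filter $F$ lies in $\de{\hA{x}}$ iff every prime filter $F'$ with $F \subset F'$ (proper inclusion) satisfies $x \in F'$, i.e. $F' \in \hA{x}$. A filter $F$ lies in $\hA{x}$ iff $x \in F$, and in $\noinA{x}$ iff $x \notin F$; thus $\max\noinA{x}$ consists of those $F$ with $x \notin F$ that are maximal among such filters under inclusion. I would then verify each inclusion.

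\emph{The inclusion $\hA{x} \cup \max\noinA{x} \sbe \de{\hA{x}}$.} If $F \in \hA{x}$, then $x \in F$; since any $F' \supseteq F$ also contains $x$, in particular every $F' \supset F$ satisfies $x \in F'$, so $F \in \de{\hA{x}}$. If instead $F \in \max\noinA{x}$, take any prime filter $F'$ with $F \subset F'$. By maximality of $F$ in $\noinA{x}$, the filter $F'$ cannot satisfy $x \notin F'$, so $x \in F'$, i.e. $F' \in \hA{x}$; hence $F \in \de{\hA{x}}$.

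\emph{The inclusion $\de{\hA{x}} \sbe \hA{x} \cup \max\noinA{x}$.} Suppose $F \in \de{\hA{x}}$ but $F \notin \hA{x}$, so $x \notin F$ and $F \in \noinA{x}$; I must show $F$ is maximal in $\noinA{x}$. If $F$ were not maximal, there would be a prime filter $F'' \in \noinA{x}$ with $F \subset F''$, meaning $x \notin F''$. But $F \in \de{\hA{x}}$ forces $x \in F''$ for every prime $F' \supset F$, a contradiction; hence $F \in \max\noinA{x}$. The one point requiring care — and the step I expect to be the main obstacle — is the existence of a maximal witness: in an infinite algebra one cannot take maximality for granted, and the argument tacitly relies on the fact that among prime filters not containing $x$ and containing $F$, a maximal one need not be produced here because I only need to conclude that $F$ \emph{itself} is maximal from the $\de$-condition; so no Zorn's lemma invocation is actually needed in this direction. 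The genuine subtlety is simply keeping the strict versus non-strict inclusions straight, since $\de$ quantifies over \emph{proper} extensions $F \subset F'$ while membership in $\hA{x}$ is about $F$ itself, and the two cases in the forward inclusion split precisely along whether $x \in F$ or not.
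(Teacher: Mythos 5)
Your proof is correct, and it is the standard argument: the paper itself gives no proof of this proposition, citing it as Lemma~1 of~\cite{mur86}, and the definitional two-inclusion verification you carry out (with the case split on whether $x\in F$, and proper inclusions handled correctly in the $\de$-condition) is exactly what that lemma amounts to. Your closing observation is also accurate: no Zorn-type existence of maximal elements is needed anywhere, since in the hard direction you only show that $F$ itself is maximal in $\noinA{x}$, so the equality holds purely order-theoretically even when $\max\noinA{x}$ is empty.
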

\begin{prop}[\cite{mur86}, Lemmas 3 and 4]\label{P:delta-h(x)-2}
	For any $x\in|\fA|$, the element $h_{\mathfrak{A}}(x)$ is enriched in {\em $\HA{\SA}$} with the element $\de{h_{\mathfrak{A}}(x)}$. Moreover, 
	if an element $a\in|\fA|$ is enriched in {\fA} with an element $\astar$,
	then $\de{h_{\mathfrak{A}}(a)}=h(\astar)$.
\end{prop}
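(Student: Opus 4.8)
The plan is to handle the two assertions separately, writing $X=h_{\mathfrak{A}}(x)$ and $B=\de{X}$ throughout, and to verify for the first assertion that $B$ is an upward set satisfying conditions (a)--(c) of Definition~\ref{D:enrichable} relative to $X$ in $\HA{\SA}$. Two of the three conditions fall out of the definition of $\de$ alone. For condition (a), $X\sbe B$: if $F\in X$ and $F\sbe F'$ then $F'\in X$ because $X$ is an upset, so in particular every proper extension of $F$ lies in $X$, whence $F\in B$; a similar one-line check shows $B$ is itself an upset. For condition (c), fix an arbitrary upset $Y$ and $F\in B$; if $F\in Y$ there is nothing to prove, and if $F\notin Y$ then any $F'\in Y$ with $F\sbe F'$ satisfies $F\subsetneq F'$ (since $Y$ is upward closed and $F\notin Y$), so $F'\in X$ by the definition of $B=\de X$. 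This says exactly $F\in Y\ra X$, hence $F\in Y\cup(Y\ra X)$. Note these two arguments work for an arbitrary upset $X$.

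The real content of the first assertion sits in condition (b), $B\ra X=X$. The inclusion $X\sbe B\ra X$ is automatic in any Heyting algebra, being equivalent to $X\we B\le X$. For the reverse I will use the specific shape $X=h_{\mathfrak{A}}(x)$: given $F\notin X$, i.e. $F\in\noinA{x}$, I want $F'$ with $F\sbe F'$, $F'\in B$, and $F'\notin X$, which witnesses $F\notin B\ra X$. This is where a Zorn argument enters: in the poset of prime filters that contain $F$ and omit $x$, the union of any chain is again a prime filter omitting $x$, so $F$ extends to a maximal element $F^{\ast}$ of $\noinA{x}$. Every proper extension of $F^{\ast}$ contains $x$, so $F^{\ast}\in B$ (equivalently $F^{\ast}\in\max\noinA{x}\sbe B$ by Proposition~\ref{P:delta-h(x)-1}), while $F^{\ast}\notin X$. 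I regard this maximal-prime-filter step as the technical heart of the first assertion.

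For the ``moreover'' part, suppose $\astar$ enriches $a$ in $\fA$. By Proposition~\ref{P:delta-h(x)-1} it suffices to prove $h_{\mathfrak{A}}(\astar)=h_{\mathfrak{A}}(a)\cup\max\noinA{a}$. For $\sbe$, take $F$ with $\astar\in F$; if $a\in F$ then $F\in h_{\mathfrak{A}}(a)$, and if $a\notin F$ I show $F$ is maximal in $\noinA{a}$: for a proper prime extension $F'\supsetneq F$ pick $y\in F'\setminus F$, apply enrichment condition (c), $\astar\le y\ve(y\ra a)$, together with primeness of $F$ to get $y\ra a\in F\sbe F'$, and conclude $a\in F'$ from $y\we(y\ra a)\le a$. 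For $\supseteq$, the case $a\in F$ uses only $a\le\astar$ (condition (a)); the case $F\in\max\noinA{a}$ is the crucial one. Assuming $\astar\notin F$, I form the filter $[F\cup\{\astar\})_{\mathfrak{A}}$ and use enrichment condition (b), $\astar\ra a=a$, to see that it still omits $a$ (if $f\we\astar\le a$ for some $f\in F$ then $f\le a$, forcing $a\in F$). Since this filter properly contains $F$ and omits $a$, the prime-filter separation theorem yields a prime filter $F'\supsetneq F$ with $a\notin F'$, contradicting maximality of $F$; hence $\astar\in F$.

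The main obstacle is twofold, and in both places it amounts to manufacturing prime filters with prescribed membership. In the first assertion it is the Zorn argument producing, above a given filter, a maximal prime filter omitting $x$; in the ``moreover'' part it is the dual construction in the $\supseteq$ direction, where enrichment conditions (b) and (c) are precisely what guarantees that enlarging $F$ by $\astar$ does not introduce $a$, so that separation delivers the contradicting prime filter. Once these two extension facts are available, the remaining steps are the routine Heyting-algebra manipulations indicated above.
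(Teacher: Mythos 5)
Your proof is correct and complete: conditions (a) and (c) of Definition~\ref{D:enrichable} for arbitrary upsets, the Zorn extension of a prime filter omitting $x$ to a maximal element of $\noinA{x}$ for condition (b), and, in the ``moreover'' part, the use of enrichment condition (c) with primeness for one inclusion and of condition (b) with the prime-filter separation theorem for the other, all check out. The paper itself gives no proof here --- it cites Lemmas 3 and 4 of \cite{mur86} --- and your reconstruction, resting on Proposition~\ref{P:delta-h(x)-1} exactly as the paper's framework intends, is the standard argument those cited lemmas encapsulate.
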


We will be using the following notation:
\[
h_{\mathfrak{A}}(\fA)=\set{h_{\mathfrak{A}}(x)}{x\in|\fA|}
~\text{and}~\Delta_{\mathfrak{A}}=\set{\de{h_{\mathfrak{A}}(x)}}{x\in|\fA|}.
\]
Then, we denote by $\bd{\fA}$ the subalgebra of $\HA{\SA}$ generated by $h_{\mathfrak{A}}(\fA)\cup\Delta_{\mathfrak{A}}$.\\

Next, given an algebra {\fA}, we define a denumerable sequence of
algebras as follows:
\[
\begin{array}{l}
\fA_{0}=\fA,\\
\fA_{i+1}=\bd{\fA_{i}}\quad (i<\om).\\
\end{array}
\]

Along with the sequence $\{\fA_{i}\}_{i<\om}$, we also have the
embeddings:
\[
\begin{array}{l}
\varphi_{ii}:\fA_{i}\ra\fA_{i}, i<\om,\quad\text{(the identity
	embedding
	on $\fA_{i}$)}\\
\varphi_{i(i+1)}:\fA_{i}\ra\fA_{i+1},i<\om,\quad\text{(Stone
	embedding
	$h_{\mathfrak{A}_i}:\fA_{i}\ra\bd{\fA_{i}}$)}\\
\varphi_{ij}=\varphi_{i(i+1)}\circ\varphi_{(i+1)(i+2)}\circ\ldots\circ\varphi_{(j-1)j},
\text{ where $i<j$}.
\end{array}
\]

Thus the sequence $\{\fA_{i}\}_{i<\om}$ along with the embeddings
$\varphi_{ij}$, $i\leq j$, form a direct family~\cite{gra79}. Let
{\dirlim} be the direct limit of this family.

We remind the reader that the carrier of {\dirlim} consists of the
equivalence classes on $\bigcup\set{\cA_i}{i<\ome}$:
\[
|x|=\set{y}{y\equiv x},
\]
for any $x\in\bigcup\set{\cA_i}{i<\ome}$. Here the equivalence
$x\equiv y$, where $x\in\cA_i$ and $y\in\cA_j$, means that either
$i\leq j$ and $\varphi_{ij}(x)=y$, or $i\geq j$ and
$\varphi_{ji}(y)=x$.

Obviously, for any $x,y\in\cA_i$,
\begin{equation}\label{E:equality}
|x|=|y|~\Longleftrightarrow~x=y.
\end{equation}

Next we define:
\[
|x|\odot|y|=|\varphi_{ij}(x)\odot y|,
\]
where $\odot\in\{\we,\ve,\ra\}$, $x\in\fA_i$, $y\in\fA_j$, and
$i\leq j$. (In case $j\leq i$, we define
$|x|\odot|y|=|x\odot\varphi_{ji}(y)|$.)

Naturally, we also define:
\[
\neg|x|=|\neg x|.
\]
(Cf.~\cite{gra79} or \cite{gor98}.)

Further, we define: For $x\in\cA_i$ and $y\in\cA_j$,
\[
|x|\leq|y|~\Longleftrightarrow~\text{either $i\le j$
and $\varphi_{ij}(x)\leq_j y$, or $j\le i$ and $x\le_i\varphi_{ji}(y)$},
\]
where $\le_i$ and $\le_J$ are the lattice partial orderings in the algebras $\fA_i$ and $\fA_j$, respectively.

It is easy to  see that
\begin{center}
	$|x|\leq|y|$ \emph{is the lattice partial order in} {\dirlim}.
\end{center}

If we denote the unit and zero of $\fA_i$ by $\on_i$ and $\ze_i$,
respectively, then
\begin{center}
	$\set{\on_i}{i<\ome}$ \emph{is the unit of} {\dirlim}
\end{center}
and
\begin{center}
	$\set{\ze_i}{i<\ome}$ \emph{is the zero of} {\dirlim}
\end{center}

Indeed, it is obvious that $\on_i\equiv\on_j$ and
$\ze_i\equiv\ze_j$. Thus $|\ze_0|\leq|x|\leq|\on_0|$.

Since each $\fA_i$ is a Heyting algebra, we arrive at the first observation.

\begin{prop}
	{\dirlim} is a Heyting algebra.
\end{prop}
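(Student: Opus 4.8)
The plan is to exploit two facts in combination: the defining properties of a Heyting algebra can all be expressed by identities, and a direct limit over a directed index set has the feature that every finite tuple of its elements admits representatives inside a single member $\fA_i$ of the family. Granting these, the statement reduces to the hypothesis that each $\fA_i$ is already a Heyting algebra, and almost nothing beyond bookkeeping remains.

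First I would record a \emph{common-index lemma}: given finitely many elements $|x_1|,\dots,|x_n|$ of $\dirlim$ with representatives $x_k\in\fA_{i_k}$, put $i=\max_k i_k$; then each class is also represented by $\varphi_{i_k i}(x_k)\in\fA_i$, so all of them have representatives in one and the same $\fA_i$. This is immediate from the directedness of $\{\fA_i\}_{i<\om}$ together with the transitivity $\varphi_{jk}\circ\varphi_{ij}=\varphi_{ik}$ of the connecting embeddings.

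Next I would observe that, because every connecting map $\varphi_{ij}$ is an embedding of Heyting algebras (it preserves $\we,\ve,\ra,\ze,\on$ and reflects the order), the operations and the order of $\dirlim$, once restricted to classes with representatives in a fixed $\fA_i$, coincide with those of $\fA_i$ transported through $x\mapsto|x|$. Concretely, for $x,y\in\fA_i$ one has $|x|\odot|y|=|x\odot y|$ for $\odot\in\{\we,\ve,\ra\}$ and $|x|\le|y|\iff x\le y$ in $\fA_i$; combined with the facts, already established above, that $\set{\ze_i}{i<\om}$ and $\set{\on_i}{i<\om}$ are the bottom and top of $\dirlim$, this exhibits the set of $\fA_i$-classes as a copy of $\fA_i$ sitting inside $\dirlim$.

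Finally I would verify the Heyting-algebra axioms. Since Heyting algebras form a variety, they are axiomatized by a set of identities, so it suffices to check each such identity $s\approx t$ in $\dirlim$: evaluating $s$ and $t$ at arbitrary classes involves only finitely many arguments, the common-index lemma supplies representatives in a single $\fA_i$, and because the map $x\mapsto|x|$ preserves all operations on those representatives the identity in $\dirlim$ collapses to the same identity evaluated in $\fA_i$, which holds. In order-theoretic terms the same one-level reduction yields the residuation $|z|\we|x|\le|y|\iff|z|\le|x|\ra|y|$ directly. I do not anticipate a genuine obstacle here; the only delicate point is the well-definedness of the operations and of $\le$ on equivalence classes, that is, their independence of the chosen representatives, and this rests on the compatibility $\varphi_{jk}\circ\varphi_{ij}=\varphi_{ik}$ of the connecting maps, which has effectively been secured already in the construction above.
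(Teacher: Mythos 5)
Your proposal is correct and follows essentially the same route as the paper: the paper's proof is a one-line appeal to the definition of $\dirlim$, the fact that each $\varphi_{ij}$ is an embedding, and the equality $|x|=|y|\Leftrightarrow x=y$ for representatives in a common $\fA_i$, with a footnote invoking exactly the fact you use, namely that varieties (hence Heyting algebras) are closed under direct limits. You have simply spelled out the standard details --- the common-index reduction, well-definedness on equivalence classes, and the collapse of each identity to a single $\fA_i$ --- which the paper leaves implicit.
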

\emph{Proof}~follows from the definition of {\dirlim} and the fact that
each $\varphi_{ij}$ is an embedding. Also, we have to use
(\ref{E:equality}).\footnote{This is also a consequence of a more general property:  Any variety is closed under
	formation of direct limits; cf.~\cite{gor98}, Theorem 1.2.9. In Section~\ref{S:reducibility} we will refer to this Theorem again.}
\\

\begin{prop}\label{P:A_i-embedding}
	Each~$\fA_i$ is embedded into {\dirlim}.
\end{prop}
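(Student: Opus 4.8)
The plan is to show that each $\fA_i$ embeds into $\dirlim$ via the canonical map $x\mapsto|x|$ that sends an element of $\fA_i$ to its equivalence class. The statement is essentially the universal property of direct limits in the category of Heyting algebras, so the work is to verify that this canonical map $\psi_i:\fA_i\ra\dirlim$ defined by $\psi_i(x)=|x|$ is a Heyting-algebra embedding. First I would check that $\psi_i$ is a homomorphism: for $x,y\in\fA_i$ and $\odot\in\{\we,\ve,\ra\}$ we have $\psi_i(x\odot y)=|x\odot y|$, while the defining equation of the operations on $\dirlim$ (taking both arguments in the same index $i$, so $\varphi_{ii}$ is the identity) gives $|x|\odot|y|=|\varphi_{ii}(x)\odot y|=|x\odot y|$; similarly $\psi_i(\neg x)=|\neg x|=\neg|x|$, and the constants are handled by $\on_i\equiv\on_0$ and $\ze_i\equiv\ze_0$, already observed above, so $\psi_i(\on_i)$ and $\psi_i(\ze_i)$ are the unit and zero of $\dirlim$.

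Next I would verify injectivity, which is the only part requiring a small argument rather than unwinding definitions. Suppose $\psi_i(x)=\psi_i(y)$, i.e. $|x|=|y|$ with $x,y\in\fA_i$. By (\ref{E:equality}) this already forces $x=y$, so $\psi_i$ is injective. Alternatively, arguing directly from the definition of the equivalence, $x\equiv y$ with both in $\fA_i$ means $\varphi_{ii}(x)=y$, and since $\varphi_{ii}$ is the identity on $\fA_i$ this is just $x=y$. Either way injectivity is immediate. Combined with the homomorphism property, $\psi_i$ is an embedding, which is exactly the assertion $\fA_i\suba\dirlim$.

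The main obstacle, such as it is, is a bookkeeping point rather than a genuine difficulty: one must make sure the operations on $\dirlim$ are well defined on equivalence classes (independence of the chosen representatives) before invoking them, since $\psi_i$ lands in a quotient. This is guaranteed by each $\varphi_{ij}$ being a homomorphism together with the compatibility $\varphi_{jk}\circ\varphi_{ij}=\varphi_{ik}$ of the transition maps, which is how the direct family was set up. Granting that the direct-limit operations are well defined (a standard fact for direct families of algebras, cf.~\cite{gra79} or~\cite{gor98}), the verification above goes through verbatim, and the proposition follows.
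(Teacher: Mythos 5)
Your proposal is correct and matches the paper's own proof: both use the canonical map $x\mapsto|x|$ and reduce injectivity to the equivalence \eqref{E:equality}, with the homomorphism property read off from the definition of the operations on $\direct$ (the paper merely states this as clear, while you spell it out, and it verifies the trivial kernel $|x|=|\on_i|\Rightarrow x=\on_i$ where you check injectivity directly, an immaterial difference). Nothing further is needed.
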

\begin{proof}
	It is clear that the map
	\[
	\varphi_i:x\mapsto|x|,
	\]
	where $x\in\fA_i$ and $i<\ome$, is an embedding of $\fA_i$ into
	{\dirlim}. Indeed, $\varphi_i(x)=|\on_i|$ means $|x|=|\on_i|$. Then
	we apply (\ref{E:equality}).
\end{proof}

Also, we observe the following.

\begin{prop}\label{P:countable}
	If~{\fA} is countable, then {\dirlim} is countable as well.
\end{prop}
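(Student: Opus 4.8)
The plan is to show that countability of $\fA$ propagates up the tower $\{\fA_i\}_{i<\om}$, and that a countable union of countable sets is countable. The key observation is that each step in the construction only adds countably many new elements when starting from a countable algebra.

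\medskip

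First I would establish the crucial lemma: if $\fA_i$ is countable, then $\fA_{i+1}=\bd{\fA_i}$ is countable. To see this, recall that $\bd{\fA_i}$ is the subalgebra of $\HtA{\SA[i]}$ generated by $h_{\mathfrak{A}_i}(\fA_i)\cup\Delta_{\mathfrak{A}_i}$. Since $\fA_i$ is countable, the image $h_{\mathfrak{A}_i}(\fA_i)=\set{h_{\mathfrak{A}_i}(x)}{x\in|\fA_i|}$ is countable, and likewise $\Delta_{\mathfrak{A}_i}=\set{\de{h_{\mathfrak{A}_i}(x)}}{x\in|\fA_i|}$ is countable, because both are indexed by the countable set $|\fA_i|$. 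Hence the generating set $h_{\mathfrak{A}_i}(\fA_i)\cup\Delta_{\mathfrak{A}_i}$ is countable. A subalgebra generated by a countable subset of any algebra with finitely many (here, finitary) operations is itself countable: it is the union over $n<\om$ of the sets of elements obtainable by at most $n$ applications of the Heyting operations $\we,\ve,\ra,\neg$ to the generators, and each such stage is countable. Thus $\fA_{i+1}$ is countable.

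\medskip

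By induction starting from $\fA_0=\fA$ (countable by hypothesis), every $\fA_i$ is countable. Since $\om$ is countable and each $\cA_i=|\fA_i|$ is countable, the disjoint union $\bigcup\set{\cA_i}{i<\ome}$ is a countable union of countable sets, hence countable. The carrier of $\dirlim$ consists of equivalence classes $|x|$ on this union, so there are at most countably many of them; therefore $\dirlim$ is countable.

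\medskip

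The main obstacle I anticipate is the lemma that $\bd{\fA_i}$ is countable, and specifically making sure the generating set is genuinely countable. The subtle point is not the subalgebra-generation step (which is the standard finitary-algebra argument) but rather the fact that $\Delta_{\mathfrak{A}_i}$ is indexed by $|\fA_i|$ rather than by the potentially uncountable set $\SA[i]$ of prime filters. It is important to note that $\bd{\fA_i}$ is \emph{not} all of $\HtA{\SA[i]}$, which could well be uncountable even when $\fA_i$ is countable; we only take the subalgebra generated by the countably many elements $h_{\mathfrak{A}_i}(x)$ and $\de{h_{\mathfrak{A}_i}(x)}$ for $x\in|\fA_i|$. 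Once this is clear, the argument reduces to the routine facts that countable unions of countable sets are countable and that finitely-generated-at-each-stage closures preserve countability.
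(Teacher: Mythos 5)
Your proof is correct and follows essentially the same route as the paper, whose proof is just the one-line assertion that each $\fA_i$ is countable; you have supplied the details it leaves implicit, in particular the key point that the generating set $h_{\mathfrak{A}_i}(\fA_i)\cup\Delta_{\mathfrak{A}_i}$ is indexed by the countable carrier $|\fA_i|$ rather than by the possibly uncountable set of prime filters, together with the standard facts that finitary generation and countable unions preserve countability.
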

\begin{proof}
	Since each $\fA_i$ is countable, {\dirlim} is also countable.
\end{proof}

\begin{prop}\label{P:sub-irr}
	If {\fA} is subdirectly irreducible, so are each $\fA_i$ and
	{\dirlim}.
\end{prop}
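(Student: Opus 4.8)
The plan is to use the standard characterization that a Heyting algebra is subdirectly irreducible precisely when it has a \emph{second-greatest element}, i.e.\ an element $c<\on$ with $x\le c$ for every $x<\on$; equivalently, the principal filter generated by such a $c$ is the least nontrivial filter, which (under the congruence--filter correspondence for Heyting algebras) is the monolith. Since $\fA_{i+1}=\bd{\fA_i}$ and $\dirlim$ is the direct limit of the family $\{\fA_i\}$, I would split the argument into two independent parts: first, that the passage $\fA\mapsto\bd\fA$ preserves the existence of a second-greatest element (which, by induction on $i$ with base case the hypothesis on $\fA=\fA_0$, yields subdirect irreducibility of every $\fA_i$); second, that the direct limit inherits one.

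For the inductive step I would prove the generic claim: if $\fA$ is subdirectly irreducible with second-greatest element $c$, then $\bd\fA$ is subdirectly irreducible. First I would note that $\{\on\}$ is the least prime filter of $\fA$. It is contained in every filter, and it is prime because $\on$ is join-irreducible in a subdirectly irreducible algebra (if $\on=x\ve y$ with $x,y<\on$, then $x,y\le c$, whence $\on\le c$, a contradiction); it is proper since $\fA$ is nontrivial, i.e.\ $\ze\ne\on$. Thus $\{\on\}$ is the least element (root) of the poset $\SA$. Next I would compute $\hA{c}=\SA\setminus\{\{\on\}\}$: indeed $c\notin\{\on\}$, while any other prime filter $F$ contains some $x<\on$, hence contains $c$ since $x\le c$ and $F$ is upward closed. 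Because $\{\on\}$ is the root, every proper (i.e.\ $\subsetneq\SA$) upward-closed subset of $\SA$ omits $\{\on\}$ and is therefore contained in $\hA{c}$. As $\hA{c}\in h_{\mathfrak{A}}(\fA)\sbe\bd\fA$ while $\on=\SA\in\bd\fA$, this shows that $\hA{c}$ is the second-greatest element of $\bd\fA$, so $\bd\fA$ is subdirectly irreducible. Crucially, this computation identifies the Stone image of the second-greatest element of $\fA_i$ with that of $\fA_{i+1}$: writing $c_i$ for the second-greatest element of $\fA_i$, we get $\varphi_{i(i+1)}(c_i)=c_{i+1}$.

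For the direct limit I would exploit exactly this compatibility, since an arbitrary direct limit of subdirectly irreducible algebras need not be subdirectly irreducible. From $\varphi_{i(i+1)}(c_i)=c_{i+1}$ and composition we get $\varphi_{0j}(c_0)=c_j$, so $c_0\equiv c_j$ for all $j$ and the classes $c_i$ determine a single element $|c_0|$ of $\dirlim$. I claim it is second-greatest. Given $|y|<|\on_0|$ with $y\in\fA_j$, injectivity of the embedding $\varphi_j$ forces $y<\on_j$; subdirect irreducibility of $\fA_j$ then gives $y\le c_j$, and applying $\varphi_j$ yields $|y|\le|c_j|=|c_0|$. Since $|c_0|<|\on_0|$, the element $|c_0|$ is the second-greatest element of $\dirlim$, which is consequently subdirectly irreducible.

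The order-theoretic facts about prime filters are routine; the step requiring care is the inductive one, specifically checking that $\hA{c}$ remains second-greatest after passing to the \emph{generated} subalgebra $\bd\fA$ (and not merely in the ambient $\HA{\SA}$) and that it is literally the Stone image of $c$ — this is what makes the second-greatest elements propagate coherently up the tower and glue in the limit. I do not expect the extra generators in $\Delta_{\fA}$ to cause any difficulty, since every element of $\bd\fA$ below the unit is automatically a proper upward-closed subset of $\SA$ and is thus already dominated by $\hA{c}$.
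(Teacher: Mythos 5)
Your proposal is correct and follows essentially the same route as the paper's proof: both rest on the pre-top (second-greatest) element characterization of subdirect irreducibility, show that the Stone image of the pre-top element is again pre-top in $\HA{\SA}$ and hence in $\bd{\fA}$, propagate this by induction along the tower via the maps $\varphi_{0i}$, and glue the compatible pre-top elements into a pre-top element of $\dirlim$. Your write-up merely fills in details the paper leaves implicit (that $\{\on\}$ is the least prime filter, that $\hA{c}=\SA\setminus\{\{\on\}\}$, and the explicit compatibility $\varphi_{i(i+1)}(c_i)=c_{i+1}$), all of which check out.
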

\begin{proof}
	Let $\ome$ be the pre-top element of {\fA}. Algebra $\HA{\SA}$
	is subdirectly irreducible, for $h(\ome)$ is a pre-top element in
	it. Therefore, $\bd{\fA}$ is subdirectly irreducible. By induction,
	we conclude that each $\fA_i$ is subdirectly irreducible as well.
	
	To continue,  we first observe that $\varphi_{0i}(\ome)$ is the
	pre-top element of $\fA_i$. We denote the latter element by
	$\ome_i$.
	
	Next assume that $|\ome|\leq|x|$ and $x\in\fA_i$. Then
	$\varphi_{0i}(\ome)\leq x$, which implies that either
	$x=\varphi_{0i}(\ome)$ or $x=\on_i$. Therefore,
	$\set{\ome_i}{i<\ome}$ is a pre-top element of {\dirlim}.
\end{proof}

We want to show that {\dirlim} is enrichable. We will do it by employing the following lemma.

\begin{lem}[\cite{mur86}, Corollary 2]\label{L:1}
	If an element $x\in|\fA_i|$ is enriched with an element $y\in|\fA_i|$,
	then $|x|$ is enriched with $|y|$ in {\dirlim}.
\end{lem}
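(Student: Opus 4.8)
The plan is to verify the three conditions (a)--(c) of Definition~\ref{D:enrichable} for the pair $|x|,|y|$ in $\dirlim$. Conditions (a) and (b) are immediate. Since the map $\varphi_i:z\mapsto|z|$ is an embedding of $\fA_i$ into $\dirlim$ (Proposition~\ref{P:A_i-embedding}), it preserves the order and all Heyting operations; thus $x\le y$ yields $|x|\le|y|$, and $y\ra x=x$ yields $|y|\ra|x|=|y\ra x|=|x|$. The genuine work is condition (c), which is universally quantified over the entire carrier of $\dirlim$, whereas the hypothesis only gives it for $x,y$ tested against elements of $\fA_i$.

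The key auxiliary fact I would isolate first is that the Stone embedding $\varphi_{i(i+1)}=h_{\mathfrak{A}_i}$ preserves enrichment: if $y$ enriches $x$ in $\fA_i$, then $\varphi_{i(i+1)}(y)$ enriches $\varphi_{i(i+1)}(x)$ in $\fA_{i+1}=\bd{\fA_i}$. To see this, apply Proposition~\ref{P:delta-h(x)-2} to $\fA_i$: the element $h_{\mathfrak{A}_i}(x)$ is enriched in $\HA{\mathcal{S}_{\mathfrak{A}_i}}$ with $\de{h_{\mathfrak{A}_i}(x)}$, and since $x$ is enriched with $y$, this enriching element equals $h_{\mathfrak{A}_i}(y)$. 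Both $h_{\mathfrak{A}_i}(x)$ and $h_{\mathfrak{A}_i}(y)$ lie in the subalgebra $\fA_{i+1}$. Enrichment is inherited by subalgebras: conditions (a) and (b) survive the restriction of the operations, and condition (c), being universal, can only get easier when its quantifier ranges over the smaller carrier $|\fA_{i+1}|$. Hence $h_{\mathfrak{A}_i}(x)$ is enriched with $h_{\mathfrak{A}_i}(y)$ already in $\fA_{i+1}$. Iterating this single step, $\varphi_{ik}(x)$ is enriched with $\varphi_{ik}(y)$ in $\fA_k$ for every $k\ge i$.

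With this in hand I would settle condition (c) by the usual ``everything happens at a finite stage'' feature of the direct limit. Let $w\in|\dirlim|$ be arbitrary; write $w=|u|$ with $u\in\fA_j$ and put $k=\max(i,j)$. Because $\varphi_{ik}(y)$ enriches $\varphi_{ik}(x)$ in $\fA_k$, condition (c) holds in $\fA_k$ for every element of $\fA_k$, in particular for $\varphi_{jk}(u)$:
\[
\varphi_{ik}(y)\le\varphi_{jk}(u)\vee\big(\varphi_{jk}(u)\ra\varphi_{ik}(x)\big).
\]
Translating this single inequality back to $\dirlim$ through the embedding $\varphi_k$ and the definition of the operations and order there (all computed after lifting the arguments to the common level $k$), we obtain $|y|\le|u|\vee(|u|\ra|x|)=w\vee(w\ra|x|)$. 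As $w$ was arbitrary, condition (c) holds in $\dirlim$, which completes the verification that $|y|$ enriches $|x|$.

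I expect the only real obstacle to be the preservation-of-enrichment step under $h_{\mathfrak{A}_i}$, namely ensuring that the universal clause (c) is not lost when passing from $\fA_i$ into its Stone image and then into the subalgebra $\fA_{i+1}$; Proposition~\ref{P:delta-h(x)-2} is exactly what rescues this, pinning the enriching element down as $h_{\mathfrak{A}_i}(y)\in|\fA_{i+1}|$. Everything else is routine direct-limit bookkeeping.
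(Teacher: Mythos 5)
Your proof is correct. The paper itself does not prove Lemma~\ref{L:1} but imports it from \cite{mur86} (Corollary 2), and your argument---transporting the enrichment through each Stone embedding via Proposition~\ref{P:delta-h(x)-2} (which pins the enriching element down as $h_{\mathfrak{A}_i}(y)\in|\fA_{i+1}|$) together with the observation that enrichment passes to subalgebras containing the pair, then discharging the universal clause (c) by lifting an arbitrary $w=|u|$ to a common finite stage $k=\max(i,j)$ and pushing the resulting inequality through the limit embedding $\varphi_k$---is precisely the standard argument underlying that citation.
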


\begin{prop}\label{P:directA-enrichable}
	Algebra {\dirlim} is enrichable.
\end{prop}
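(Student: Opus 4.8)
The plan is to reduce enrichability of $\direct$ to the enrichability already available one level up in the tower $\{\fA_i\}$, via the Stone embedding, and then transport it to the limit using Lemma~\ref{L:1}. An arbitrary element of $\direct$ has the form $|x|$ for some $x\in|\fA_i|$ and some $i<\om$, so it suffices to exhibit, for each such $x$, an element of $\direct$ that enriches $|x|$.

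First I would pass from level $i$ to level $i+1$. Under the Stone embedding $\varphi_{i(i+1)}=h_{\fA_i}$ the element $x$ is sent to $h_{\fA_i}(x)\in|\fA_{i+1}|$, and by Proposition~\ref{P:delta-h(x)-2} this image is enriched in $\HA{\mathcal{S}_{\fA_i}}$ with $\de{h_{\fA_i}(x)}$. Both $h_{\fA_i}(x)$ and $\de{h_{\fA_i}(x)}$ lie in $\fA_{i+1}=\bd{\fA_i}$, since the latter is by definition generated by $h_{\fA_i}(\fA_i)\cup\Delta_{\fA_i}$.

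The one point that needs care is that enrichment, as formulated in Definition~\ref{D:enrichable}, must hold inside the subalgebra $\fA_{i+1}$ rather than merely inside the ambient algebra $\HA{\mathcal{S}_{\fA_i}}$. Conditions (a) and (b) involve only the two designated elements together with $\ra$, which the Heyting subalgebra inherits, so they transfer verbatim. Condition (c) is universally quantified over the carrier; since it holds for every element of $\HA{\mathcal{S}_{\fA_i}}$, it a fortiori holds for every element of the smaller carrier $|\fA_{i+1}|$, the operations $\ve$ and $\ra$ again being inherited. Hence $h_{\fA_i}(x)=\varphi_{i(i+1)}(x)$ is enriched with $\de{h_{\fA_i}(x)}$ in $\fA_{i+1}$ itself.

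Finally I would apply Lemma~\ref{L:1} at level $i+1$: the enrichment of $\varphi_{i(i+1)}(x)$ by $\de{h_{\fA_i}(x)}$ inside $\fA_{i+1}$ yields that $|\varphi_{i(i+1)}(x)|$ is enriched with $|\de{h_{\fA_i}(x)}|$ in $\direct$. Since $\varphi_{i(i+1)}(x)\equiv x$, we have $|\varphi_{i(i+1)}(x)|=|x|$, so $|x|$ is enriched in $\direct$. As $|x|$ was arbitrary, every element of $\direct$ is enrichable, which is the claim. The only genuine subtlety is the descent of condition (c) to the subalgebra discussed above; it is harmless precisely because (c) is a $\forall$-statement and restricting the range of the quantifier to a subset preserves truth.
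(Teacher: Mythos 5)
Your proof is correct and takes essentially the same route as the paper's: both rest on Proposition~\ref{P:delta-h(x)-2} to get that $h_{\mathfrak{A}_i}(x)$ is enriched with $\de{h_{\mathfrak{A}_i}(x)}$ in $\fA_{i+1}$, and then transport this enrichment to the limit via Lemma~\ref{L:1}. Your explicit verification that enrichment descends from $\HA{\mathcal{S}_{\mathfrak{A}_i}}$ to the subalgebra $\fA_{i+1}$ --- conditions (a)--(b) involving only the designated pair and the inherited $\ra$, and (c) being a universal statement whose quantifier range only shrinks --- simply spells out a step the paper leaves implicit.
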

\begin{proof}
	Let $x\in|\fA_i|$. Then $h_{\mathfrak{A}_i}(x)$ is enriched with $\de h_{\mathfrak{A}_i}(x)$ in
	$\fA_{i+1}$. It remains to apply Lemma~\ref{L:1}.
\end{proof}

In this paper we aim to prove the following theorem.
\begin{theo}\label{T:main-theorem}
	Given a Heyting algebra $\fA$, the algebras $\fA$ and $\direct$ generate one and the same variety. In other words, $\fA$ and $\direct$ determine one and the same equational theory, that is $L(\fA)=L(\direct)$.
\end{theo}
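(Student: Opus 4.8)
The plan is to establish the equality $L(\fA)=L(\direct)$ as two inclusions, which amount to saying that each of $\fA$ and $\direct$ lies in the variety generated by the other. One inclusion is immediate. Taking $i=0$ in Proposition~\ref{P:A_i-embedding} gives $\fA\suba\direct$, and since a subalgebra validates every identity valid in the ambient algebra, we get $L(\direct)\sbe L(\fA)$. Hence all the work lies in the reverse inclusion $L(\fA)\sbe L(\direct)$, which is precisely the statement that $\direct$ belongs to the variety $\var{\fA}$ generated by $\fA$.

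To prove $\direct\in\var{\fA}$ I would first reduce it to a statement about the single building block $\bd{\cdot}$. Since $\direct$ is the direct limit of the family $\{\fA_i\}_{i<\om}$ and every variety is closed under direct limits (the footnote reference to~\cite{gor98}, Theorem 1.2.9), it suffices to show that each $\fA_i\in\var{\fA}$. This I would prove by induction on $i$. The base case is trivial because $\fA_0=\fA$. For the inductive step, assume $\fA_i\in\var{\fA}$; then $\var{\fA_i}\sbe\var{\fA}$, so it is enough to show $\fA_{i+1}=\bd{\fA_i}\in\var{\fA_i}$. Thus the whole theorem reduces to a single claim, uniform in the algebra: for every Heyting algebra $\fB$ one has $\bd{\fB}\in\var{\fB}$.

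To attack $\bd{\fB}\in\var{\fB}$, recall that $\bd{\fB}$ is the subalgebra of $\HA{\SB}$ generated by $h_{\mathfrak{B}}(\fB)$ together with the enrichment elements $\Delta_{\mathfrak{B}}=\set{\de{h_{\mathfrak{B}}(x)}}{x\in|\fB|}$, where by Proposition~\ref{P:delta-h(x)-2} each $\de{h_{\mathfrak{B}}(x)}$ enriches $h_{\mathfrak{B}}(x)$. The strategy, carried out in Section~\ref{S:reducibility}, is to isolate a sufficient condition for membership in $\var{\fB}$ and to reduce it, via Corollary~\ref{C:reducibility-4}, to the idea of \emph{one-element enrichment}: adjoining a single enriching element to an algebra already in $\var{\fB}$ keeps the result inside $\var{\fB}$. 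Granting this, $\bd{\fB}$ is recovered from $\fB$ by iterating one-element enrichments and closing under the class operators that preserve $\var{\fB}$, yielding $\bd{\fB}\in\var{\fB}$.

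The main obstacle is exactly the one-element enrichment claim, for it is here that Kuznetsov's theorem is genuinely used. Nothing order-theoretic forces an enriched algebra to satisfy the identities of $\fB$: the adjoined element is a new element of $\HA{\SB}$, dense by Proposition~\ref{P:negation}, and one must verify that no identity of $\fB$ is broken by its presence. My plan for this is to pass between the algebraic description of enrichment developed in Section~\ref{S:localization-algebraic} and the proof-theoretic one in Section~\ref{S:localization-proof-theoretic-0}, matching them in Section~\ref{S:connecting-viewpoints}. The decisive step, in Section~\ref{S:Stone-embedding-properties-main}, is to analyse how the Stone embedding $h_{\mathfrak{B}}$ and the operation $\de$ act on a single adjoined element, so that any failure of an identity in the one-element enrichment can be pulled back to a failure already witnessed in $\fB$ itself. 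Assembling these pieces in Section~\ref{S:completing-proof} gives $\bd{\fB}\in\var{\fB}$, and with it the theorem.
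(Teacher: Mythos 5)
Your overall skeleton is exactly the paper's: the easy inclusion $L(\direct)\sbe L(\fA)$ from Proposition~\ref{P:A_i-embedding}, the reduction via closure of varieties under direct limits to the single claim $L(\fB)=L(\bd{\fB})$ for every Heyting algebra $\fB$ (this is Proposition~\ref{P:reducibility-1} and Corollary~\ref{C:reducibility-2}), and the further reduction to one-element enrichment via Corollary~\ref{C:reducibility-4}. One gloss in your third step is too quick, though: recovering $\bd{\fB}$ by ``iterating one-element enrichments and closing under class operators'' does not work as stated, because $\bd{\fB}$ is generated by infinitely many new elements $\de{h_{\mathfrak{B}}(x)}$; the paper needs the directed family $\left(\de{[\fA_X]}\right)_{X\Subset|\fA|}$ with direct limit $\de{[\fA]}$ (Proposition~\ref{P:reducubility-3}) together with the isomorphism $\de{[\fA_{\lbrace a,b\rbrace}]}\cong\de{[\de{[\fA_a]}_{h(b)}]}$ from~\cite{mur90} to run the induction on finite generating sets.

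The genuine gap sits at the point you yourself call the main obstacle: you give no argument for $L(\fA)=L(\de{[\fA_a]})$, and both mechanisms you gesture at are off. First, Kuznetsov's Theorem cannot be ``genuinely used'' off the shelf: it concerns $\Int$, $\KM$ and modality-free formulas, whereas what is needed here is preservation of identities in the language $\Lantau$ containing a constant $\tau$ naming the \emph{specific} element $a$; since $\tau$ is not subject to substitution, the localized statement is neither an instance nor an easy consequence of the original theorem. The paper must prove a $\tau$-relativized analog from scratch: the $\Lantau$-equipollence of $\KMt$ and $\Intau$ (Proposition~\ref{P:equipollence}, via the rank-reduction Lemmas~\ref{P:key-lemma-one} and~\ref{P:key-lemma-two}), completeness of $\KMt$ with respect to $\tau\bn$-expansions (Proposition~\ref{P:KMt-completeness}), and then Corollary~\ref{C:L(A_tau)=L(B_tau)}, whose proof additionally needs the congruence extension property and the preservation under $\Ho$ and $\Pro$ of the first-order sentence expressing enrichability of $\tau$. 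Second, your ``decisive step'' --- pulling a failed identity in the enrichment back to a failure witnessed in $\fB$ --- is not what the Stone-embedding analysis of Section~\ref{S:Stone-embedding-properties-main} delivers, and it is unclear how it could work directly, since a valuation into $\de{[\fA_a]}$ takes values outside the image of $h_{\mathfrak{A}}$ and has no evident preimage in $\fA$. What that analysis actually proves is a uniqueness statement (Conjecture~\ref{conjecture}): if $\fA_{\tau_{a}}\pack\fB_{\tau_{a}}$ and $(a,\astar)$ is an $\cE$-pair in $\fB$, then $\fB\cong\de{[\fA_a]}$, the key computation being $\varphi^{-1}(\de{h_{\mathfrak{A}}(a)})=h_{\mathfrak{B}}(\astar)$ (Proposition~\ref{P:axiliary} and Corollary~\ref{C:meet-F_a-1}). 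Only by combining this isomorphism with the proof-theoretically obtained same-logic packed extension does one get $L(\fA)=L(\de{[\fA_a]})$; without that identification, your outline leaves the crucial semantic transfer unestablished.
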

\section{Reduction to one-element enrichment}\label{S:reducibility}
The sense of the term \textit{one-element enrichment} should become clear at the end of this section.
\begin{prop}\label{P:reducibility-1}
	Given an algebra $\fA$, the following conditions are equivalent$:$
\[
\begin{array}{cl}
(\emph{a}) &L(\fA)=L(\direct);\\
(\emph{b}) &L(\fA_i)=L(\fA_{i+1}),~\text{for all $i\ge 0$}.
\end{array}
\]
\end{prop}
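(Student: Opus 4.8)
The plan is to identify $L(\direct)$ with the intersection $\bigcap_{i<\ome}L(\fA_i)$ and then exploit the fact that the sets $L(\fA_i)$ form a descending chain. First I would record two monotonicity facts coming directly from the embeddings. Since $\varphi_{i(i+1)}$ embeds $\fA_i$ into $\fA_{i+1}$, every valuation in $\fA_i$ becomes, after composing with $\varphi_{i(i+1)}$, a valuation in $\fA_{i+1}$; as $\varphi_{i(i+1)}$ is injective, validity of a formula throughout $\fA_{i+1}$ forces its validity throughout $\fA_i$, i.e. $L(\fA_{i+1})\sbe L(\fA_i)$. Thus $\{L(\fA_i)\}_{i<\ome}$ is descending with $L(\fA)=L(\fA_0)$ at its top. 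The same reasoning applied to the embeddings $\varphi_i:\fA_i\ra\direct$ of Proposition~\ref{P:A_i-embedding} gives $L(\direct)\sbe L(\fA_i)$ for every $i$, hence $L(\direct)\sbe\bigcap_i L(\fA_i)$.

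The key step is the reverse inclusion $\bigcap_i L(\fA_i)\sbe L(\direct)$, and this is where the real content lies. I would use the defining feature of the direct limit: every element of $\direct$ has the form $|x|$ with $x\in|\fA_i|$ for some $i$. Hence a valuation $v$ in $\direct$ assigns to the finitely many variables occurring in a given formula $A$ values $|x_1|,\ldots,|x_n|$ with $x_k\in|\fA_{i_k}|$; setting $j=\max_k i_k$ and pushing each $x_k$ forward along $\varphi_{i_k j}$, all these values are represented in the single stage $\fA_j$. Because $\varphi_j$ is an embedding, the operations of $\direct$ restricted to $\varphi_j[\fA_j]$ are exactly those of $\fA_j$, so $v(A)=|w|$, where $w$ is the value of $A$ under the corresponding valuation in $\fA_j$. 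If $A\in\bigcap_i L(\fA_i)$, then $w=\on_j$ and $v(A)=|\on_j|$ is the unit of $\direct$; since $v$ was arbitrary, $A\in L(\direct)$. (Alternatively, one may invoke that varieties are closed under direct limits, so $\direct$ lies in the variety generated by $\{\fA_i\}$, whose equational theory is $\bigcap_i L(\fA_i)$.) Combined with the previous paragraph this yields $L(\direct)=\bigcap_i L(\fA_i)$.

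Finally I would combine these observations. For (b)$\Ra$(a): if $L(\fA_i)=L(\fA_{i+1})$ for all $i$, the descending chain collapses, so $\bigcap_i L(\fA_i)=L(\fA_0)$ and therefore $L(\direct)=L(\fA_0)=L(\fA)$, which is (a). For (a)$\Ra$(b): from $L(\fA)=L(\direct)=\bigcap_i L(\fA_i)$ and the chain inequalities $L(\fA_0)\supseteq L(\fA_i)\supseteq\bigcap_i L(\fA_i)=L(\fA_0)$, we get $L(\fA_i)=L(\fA_0)$ for every $i$, whence $L(\fA_i)=L(\fA_{i+1})$, i.e. (b). The only genuinely non-formal point is the reverse inclusion in the middle paragraph; everything else is bookkeeping with the descending chain, so I expect the identification of $L(\direct)$ with the intersection to be the main obstacle.
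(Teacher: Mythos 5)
Your proof is correct and follows essentially the same route as the paper: both directions rest on the embedding chain $\fA\suba\fA_i\suba\direct$ together with the fact that $\direct$ satisfies every identity holding in all the stages $\fA_i$. The only difference is packaging: where the paper cites Gorbunov's Theorem 1.2.9 ($\direct$ is a subalgebra of an ultraproduct of the $\fA_i$'s, hence lies in the variety they generate), you prove the needed inclusion $\bigcap_{i}L(\fA_i)\sbe L(\direct)$ by hand via the finite-stage argument, which is precisely the standard proof that varieties are closed under direct limits --- the alternative you yourself note in parentheses.
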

\begin{proof}
	Suppose (a) is true. Since $\fA\suba\fA_i\suba\direct$, we get (b). Now assume that (b). Then each $\fA_i$ generates one and the same variety. By virtue of~\cite{gor98}, Theorem 1.2.9, $\direct$ is a subalgebra of ultraproduct of some of $\fA_i$'s and hence generates the same variety.
\end{proof}
\begin{cor}\label{C:reducibility-2}
	A sufficient condition for Theorem~\ref{T:main-theorem}
	is that for any Heyting algebra $\fA$, $L(\fA)=L(\bd{\fA})$.
\end{cor}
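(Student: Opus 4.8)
The plan is to reduce the Corollary directly to clause~(b) of Proposition~\ref{P:reducibility-1}. First I would record that every member of the sequence $\{\fA_i\}_{i<\om}$ is a Heyting algebra: $\fA_0=\fA$ is one by assumption, and for each $i\ge 0$ the algebra $\fA_{i+1}=\bd{\fA_i}$ is, by its very construction, a subalgebra of the Heyting algebra $\HA{\mathcal{S}_{\fA_i}}$, hence itself a Heyting algebra. This observation is what makes the hypothesis of the Corollary applicable not merely to $\fA$ but to each $\fA_i$.

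Next I would invoke the assumed identity --- that $L(\fB)=L(\bd{\fB})$ holds for \emph{every} Heyting algebra $\fB$ --- and specialize it to $\fB:=\fA_i$. Since $\fA_{i+1}=\bd{\fA_i}$ by definition, this yields $L(\fA_i)=L(\bd{\fA_i})=L(\fA_{i+1})$ for every $i\ge 0$, which is exactly clause~(b) of Proposition~\ref{P:reducibility-1}. Finally, the equivalence (a)$\Leftrightarrow$(b) of that Proposition turns clause~(b) into clause~(a), namely $L(\fA)=L(\direct)$, which is the conclusion of Theorem~\ref{T:main-theorem}.

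I do not expect any genuine obstacle in this argument; its content is entirely bookkeeping. The single point deserving explicit mention is the uniform applicability of the hypothesis along the whole tower $\fA_0,\fA_1,\fA_2,\dots$, which is legitimate precisely because each $\fA_i$ is itself a Heyting algebra. All of the real difficulty is deferred to establishing the hypothesis $L(\fA)=L(\bd{\fA})$ in the first place, and that is exactly the task the remaining sections of the paper are arranged to carry out.
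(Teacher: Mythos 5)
Your proof is correct and follows exactly the paper's own route: apply the hypothesis along the tower $\fA_0,\fA_1,\fA_2,\dots$ to obtain clause~(b) of Proposition~\ref{P:reducibility-1}, then use the equivalence (a)$\Leftrightarrow$(b). Your explicit remark that each $\fA_i$ is itself a Heyting algebra (being a subalgebra of $\HA{\mathcal{S}_{\fA_i}}$) is a small but welcome addition that the paper leaves implicit.
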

\begin{proof}
Indeed, if 	$L(\fA)=L(\bd{\fA})$, for any algebra $\fA$, then starting from an algebra $\fA=\fA_0$, we obtain the condition (b) of Proposition~\ref{P:reducibility-1}.
\end{proof}

In the sequel, we will be using the following notation.
\begin{itemize}
	\item Given two sets $X$ and $Y$,
	\[
	X\fsub Y
	\]
	denotes that $X$ is a finite subset of $Y$.
\item Let $\fA$ be an algebra and $X\fsub |\fA|$. We denote by $\de{[\fA_X]}$ and by
$\de{[\fA_a]}$, if $X=\lbrace a\rbrace$, the subalgebra of
$\de{[\fA]}$ generated by $|h_{\mathfrak{A}}(\fA)|\cup\set{\de{h_{\mathfrak{A}}(x)}}{x\in X}$. 
\end{itemize}
\begin{prop}\label{P:reducubility-3}
	Given an algebra $\fA$, if for any $X,Y\fsub|\fA|$, $L(\de{[\fA_X]})=L(\de{[\fA_Y]})$, then $L(\fA)=L(\bd{\fA})$ and, hence, $L(\fA)=L(\direct)$.
\end{prop}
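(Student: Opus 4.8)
The plan is to establish the two inclusions $L(\bd{\fA})\sbe L(\fA)$ and $L(\fA)\sbe L(\bd{\fA})$ separately, the first being free and the second resting on a finiteness argument. First I would record the structural fact that, since $h_{\mathfrak{A}}$ is a Heyting embedding, its image $h_{\mathfrak{A}}(\fA)$ is itself a subalgebra of $\HA{\SA}$ isomorphic to $\fA$. As this image is precisely the generating set of $\de{[\fA_{\emptyset}]}$, we get $\de{[\fA_{\emptyset}]}=h_{\mathfrak{A}}(\fA)\cong\fA$, hence $L(\de{[\fA_{\emptyset}]})=L(\fA)$. Moreover, for every $X\fsub|\fA|$ there is a chain $\fA\cong\de{[\fA_{\emptyset}]}\suba\de{[\fA_X]}\suba\bd{\fA}$, and since a subalgebra validates at least the identities of the larger algebra, this yields $L(\bd{\fA})\sbe L(\de{[\fA_X]})\sbe L(\fA)$ with no appeal to the hypothesis. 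In particular the inclusion $L(\bd{\fA})\sbe L(\fA)$ is immediate.

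For the reverse inclusion I would argue by contraposition, using that $\bd{\fA}$ is generated by $h_{\mathfrak{A}}(\fA)\cup\Delta_{\mathfrak{A}}$. Suppose $A\notin L(\bd{\fA})$, so some valuation $v:\fm_a\ra\bd{\fA}$ satisfies $v(A)\neq\on$. The value $v(A)$ depends only on the finitely many variables occurring in $A$, and each element of $\bd{\fA}$ is a term built from finitely many of the generators $h_{\mathfrak{A}}(x)$ and $\de{h_{\mathfrak{A}}(x)}$. Collecting the finitely many generators needed to express those finitely many values, the ones of the form $\de{h_{\mathfrak{A}}(x)}$ involve $x$ ranging over some finite set $X_{0}\fsub|\fA|$. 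Hence the relevant values all lie in $\de{[\fA_{X_{0}}]}$; extending $v$ arbitrarily on the remaining variables gives a valuation into $\de{[\fA_{X_{0}}]}$ refuting $A$, so $A\notin L(\de{[\fA_{X_{0}}]})$. Now the hypothesis, applied with $X=X_{0}$ and $Y=\emptyset$, gives $L(\de{[\fA_{X_{0}}]})=L(\de{[\fA_{\emptyset}]})=L(\fA)$, whence $A\notin L(\fA)$. This proves $L(\fA)\sbe L(\bd{\fA})$, and together with the first part $L(\fA)=L(\bd{\fA})$.

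For the concluding clause I would pass to the tower $\fA=\fA_{0}\suba\fA_{1}\suba\cdots$ with $\fA_{1}=\bd{\fA}$ and invoke Proposition~\ref{P:reducibility-1}: what has just been shown is exactly $L(\fA_{0})=L(\fA_{1})$, the base instance of condition (b). To reach $L(\fA)=L(\direct)$ one needs $L(\fA_{i})=L(\fA_{i+1})$ at every level, i.e. the same conclusion for each $\fA_{i}$ in place of $\fA$. Since the $\de$-construction and the finiteness argument above are uniform, the displayed step applies verbatim to $\fA_{i}$ once one knows that the finite partial enrichments of $\fA_{i}$ share a single logic; granting this, Proposition~\ref{P:reducibility-1} delivers $L(\fA)=L(\direct)$. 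I expect the genuine obstacle to sit precisely here: the finiteness/factorization step at a single level is routine, whereas verifying that the hypothesis propagates up the tower — equivalently, that the finite partial enrichments of every $\fA_{i}$, not merely of $\fA$, have a common logic — is the delicate point, and is presumably where the later reduction to one-element enrichment (holding for all Heyting algebras) must be brought to bear.
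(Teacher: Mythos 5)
Your argument is correct and is essentially the paper's own: the paper notes that $\left(\de{[\fA_X]}\right)_{X\fsub|\fA|}$, ordered by inclusion, is a directed family whose direct limit is $\bd{\fA}$, and cites Gorbunov's Theorem 1.2.9 to conclude $L(\bd{\fA})=L(\de{[\fA_{\emptyset}]})=L(\fA)$; your contrapositive finiteness argument --- a refuting valuation touches only finitely many generators, hence factors through some $\de{[\fA_{X_{0}}]}$ --- is precisely that citation unpacked, resting on the same identification $\de{[\fA_{\emptyset}]}=h_{\mathfrak{A}}(\fA)\cong\fA$. Your diagnosis of the final clause is also accurate: the paper concludes by ``applying Corollary~\ref{C:reducibility-2}'', whose sufficient condition is quantified over \emph{all} Heyting algebras, and the single-algebra hypothesis of the proposition indeed does not by itself yield $L(\fA_{i})=L(\fA_{i+1})$ above level one. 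This is harmless in context, because the proposition is only ever invoked, in Corollary~\ref{C:reducibility-4}, under a hypothesis holding for every algebra and every element, where the isomorphism $\de{[\fA_{\lbrace a,b\rbrace}]}\cong\de{[\de{[\fA_{a}]}_{h(b)}]}$ of~\cite{mur90}, Lemma 5, supplies exactly the propagation up the tower that you anticipated would be needed.
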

\begin{proof}
	We notice that $\left(\de{[\fA_X]}\right)_{X\Subset|\mathfrak{A}|}$ along with identity maps is a directed family, the direct limit of which is $\de{[\fA]}$.\footnote{Compare with~\cite{gra79}, \S21, Lemma 3.} Thus, by virtue of~\cite{gor98}, Theorem 1.2.9, $L(\de{[\fA]})=L(\fA)$ (since $\fA=\de{[\fA_\emptyset]}$). Then, we apply Corollary~\ref{C:reducibility-2}.
\end{proof}
\begin{cor}\label{C:reducibility-4}
	A sufficient condition for Theorem~\ref{T:main-theorem} is that, given a Heyting algebra $\fA$, for any $a\in|\fA|$, $L(\fA)=L(\de{[\fA_a]})$. 
\end{cor}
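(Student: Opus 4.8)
The plan is to derive Corollary~\ref{C:reducibility-4} from Proposition~\ref{P:reducubility-3} by reducing the two-variable equality of logics there to the single-variable condition stated here. So I would assume the hypothesis of the corollary, namely that $L(\fA)=L(\de{[\fA_a]})$ for every $a\in|\fA|$, and aim to verify the hypothesis of Proposition~\ref{P:reducubility-3}, namely $L(\de{[\fA_X]})=L(\de{[\fA_Y]})$ for all $X,Y\fsub|\fA|$.

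First I would observe that it suffices to show $L(\de{[\fA_X]})=L(\fA)$ for every finite $X\fsub|\fA|$, since then any two such logics coincide, both being equal to $L(\fA)$. (Note $\fA=\de{[\fA_\emptyset]}$, so the case $X=\emptyset$ is trivial and anchors the induction.) Next I would argue by induction on the cardinality of $X$. The base case $|X|=0$ or $|X|=1$ is handled: for a single element $a$ the hypothesis $L(\fA)=L(\de{[\fA_a]})$ is exactly what is given. For the inductive step, I would want to pass from $\de{[\fA_X]}$ with $X=X_0\cup\{a\}$ to $\de{[\fA_{X_0}]}$; the intuition is that adjoining one more enriching element $\de{h_{\mathfrak{A}}(a)}$ to the generators should not change the generated variety, precisely because the inductive hypothesis guarantees that each such single adjunction preserves the logic.

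The main obstacle I anticipate is that the hypothesis $L(\fA)=L(\de{[\fA_a]})$ speaks about adjoining $\de{h_{\mathfrak{A}}(a)}$ to the \emph{full} base algebra $\fA$ (equivalently to $h_{\mathfrak{A}}(\fA)$), whereas the inductive step needs an analogous statement relative to the intermediate algebra $\de{[\fA_{X_0}]}$, whose generators already include several $\delta$-images. So one must either reformulate the single-element hypothesis so that it applies uniformly to each algebra in the chain, or exploit that all these algebras sit inside $\bd{\fA}\suba\HA{\SA}$ and generate the same variety as $\fA$. The cleanest route is probably to note that the chain of subalgebras obtained by adjoining the elements of $X$ one at a time, ordered by inclusion, is a directed family whose direct limit is $\de{[\fA_X]}$, and then to invoke~\cite{gor98}, Theorem 1.2.9 (as in Proposition~\ref{P:reducubility-3}) together with the hypothesis to conclude that each link in the chain preserves the generated variety.

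Alternatively, and perhaps more directly, I would simply remark that the family $\bigl(\de{[\fA_a]}\bigr)_{a\in|\fA|}$ already covers enough: since $\de{[\fA]}$ is the direct limit of the $\de{[\fA_X]}$ and each $\de{[\fA_a]}$ generates $\var{\fA}$ by hypothesis, the join $\de{[\fA_X]}=\bigvee_{a\in X}\de{[\fA_a]}$ (the subalgebra generated by finitely many of these) generates a variety contained in $\var{\fA}$ and containing it, hence equal to it. Either way, once every $L(\de{[\fA_X]})=L(\fA)$ is established, the premise of Proposition~\ref{P:reducubility-3} holds and that proposition delivers $L(\fA)=L(\direct)$, which is exactly Theorem~\ref{T:main-theorem}. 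I expect the write-up to be short, with the inductive bookkeeping and the correct application of the direct-limit theorem being the only genuinely delicate points.
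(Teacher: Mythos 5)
You have the right skeleton---reduce to Proposition~\ref{P:reducubility-3} by showing $L(\de{[\fA_X]})=L(\fA)$ for all $X\fsub|\fA|$ by induction on $|X|$---and you correctly locate the obstacle, but neither of your two routes closes it, and the paper's proof does so with a specific structural fact you never supply. First, the hypothesis must be read as universally quantified over \emph{all} Heyting algebras (the paper's proof opens ``Suppose for any algebra $\fA$ and any $a\in|\fA|$\dots''), not, as you read it, for the one fixed $\fA$; this is essential because the inductive step applies the hypothesis to the new algebra $\de{[\fA_a]}$ with the element $h_{\mathfrak{A}}(b)$, yielding $L(\de{[\fA_a]})=L(\de{[\de{[\fA_a]}_{h(b)}]})$. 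But $\de{[\de{[\fA_a]}_{h(b)}]}$ lives inside $\HA{\mathcal{S}_{\de{[\fA_a]}}}$, over the prime filters of the \emph{intermediate} algebra, while the target $\de{[\fA_{\lbrace a,b\rbrace}]}$ lives inside $\HA{\SA}$. The missing ingredient is the isomorphism $\de{[\fA_{\lbrace a,b\rbrace}]}\cong\de{[\de{[\fA_a]}_{h(b)}]}$, which the paper imports as Lemma~5 of~\cite{mur90}; without it, the single-element hypothesis says nothing about $\de{[\fA_{X_0\cup\lbrace a\rbrace}]}$. Your first route's appeal to~\cite{gor98}, Theorem 1.2.9 is vacuous at this point: the direct limit of a finite chain of subalgebras is just its top member, and that theorem cannot show that ``each link in the chain preserves the logic''---that is precisely what needs proving.

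Your second route rests on a false principle. From the hypothesis that each $\de{[\fA_a]}$ generates $\var{\fA}$ you conclude that the subalgebra $\de{[\fA_X]}$ generated by their union generates a variety ``contained in $\var{\fA}$.'' No such closure holds: a subalgebra of a big algebra generated by subalgebras lying in a variety $\cV$ need not lie in $\cV$. For instance, inside the four-element chain the two three-element subchains are subalgebras, each generating the variety of the three-element chain, yet together they generate the whole four-element chain, which lies outside that variety. Note also that the easy inclusion is $L(\de{[\fA_X]})\sbe L(\fA)$ (since $\fA$ embeds in $\de{[\fA_X]}$); the hard inclusion $L(\fA)\sbe L(\de{[\fA_X]})$, equivalently $\de{[\fA_X]}\in\var{\fA}$, is exactly what the corollary asserts, so your argument there is circular. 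To repair the proof, replace both routes by the paper's induction: apply the universally quantified one-element hypothesis to $\de{[\fA_a]}$ and use the isomorphism of~\cite{mur90}, Lemma~5, to pull the result back to $\de{[\fA_{\lbrace a,b\rbrace}]}$, then iterate and finish with Proposition~\ref{P:reducubility-3}.
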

\begin{proof}
	Suppose for any algebra $\fA$ and any $a\in|\fA|$, $L(\fA)=L(\de{[\fA_a]})$.
Let $a,b\in|\fA|$. By virtue of~\cite{mur90}, Lemma 5, the algebras $\de{[\fA_{\lbrace a,b\rbrace}]}$ and $\de{[\de{[\fA_a]}_{h(b)}]}$ are isomorphic. This implies that $L(\de{[\fA_a]})=L(\de{[\fA_{\lbrace a,b\rbrace}]})$. By induction, we conclude that for any $X\fsub|\fA|$, $L(\fA)=L(\de{[\fA_X]})$. It remains to apply Proposition~\ref{P:reducubility-3}.	
\end{proof}

In the next section, we show that the enrichabilty of an element $a$ of an algebra $\fA$ is equivalent to the existence of a unary operation associated with $a$. Unlike the unary operation $\square$ of Definition~\ref{D:KM-algebra} which ensures the enrichabilty of all elements of algebra $\fA$, the new operation associated with $a$ guarantees the enrichabilty of just $a$. We call this treatment of one-element enrichment \emph{algebraic}.

\section{One-element enrichment from an algebraic viewpoint}\label{S:localization-algebraic}
In this section, we will treat each pair
$(a,\astar)$, where $\astar$ enriches $a$, as an element of a binary
relation. The main reference in this section is~\cite{gra79}, {\S}13 and {\S}28.

\begin{defi}[{\cE}-pair, relation {\cE}]
	Given an algebra {\fA} and $a,\astar\in|\fA|$, $(a,\astar)$ is an
	\cE-pair $($in {\fA}$)$ if $a$ is enriched with $\astar$ in {\fA}.
	Then, we define\emph{:}
	\[
	\cE_{\mathfrak{A}}=\set{(a,\astar)}{(a,\astar)\mbox{ is an \cE-pair in \fA}}.
	\]
	We will drop the subscript `$\fA$' and write simply `$\cE$' when confusion is unlikely. 
\end{defi}

We note that for any Heyting algebra, its relation {\cE} is never
empty, for $(\on,\on)$ is an \cE-pair. Also, if $\om$ is the pre-top
element of a subdirectly irreducible algebra, then $(\om,\on)$ is an
\cE-pair in this algebra.
\begin{defi}[\bn-negation]\label{D:bn-negation}
	A unary operation $\bn x$ in a Heyting algebra is called tilde-negation $($or $\sim$-negation for short$)$ if the following identities hold:
	{\em\[
	\begin{array}{cl}
	(\text{a}) &x\ra y\le \bn y\ra\bn x;\\
	(\text{b}) &x\we\bn x\le\bn\on;\\
	(\text{c}) &\bn\ze\le x\ve\bn x;\\
	(\text{d}) &\bn\ze\ra\bn\on\le\bn\on,~\textit{or equivalently}~\bn\ze\ra\bn\on=\bn\on.
	\end{array}
	\]}
Sometimes, it will be convenient, instead of $\bn x$, to  write $t(x)$ (perhaps with a subscript at `$\bn$' and `$t$').	
\end{defi}

It is obvious that in any Heyting algebra with $\bn$-negation, the following quasi-identity holds:
\begin{equation}\label{E:tilde-quasi-idenity}
x\le y \Longrightarrow \bn y\le\bn x.
\end{equation}

Before we show how a \bn-negation can be defined in a Heyting algebra, we will prove some properties of this operation.
\begin{prop}\label{P:bn-properties}
	The following properties hold in any Heyting algebra with \bn-negation.
	\[
	\begin{array}{cl}
	(\emph{a}) & \bn\on\le\bn x\le\bn\ze;\\
	(\emph{b}) & \bn x\we\bn\bn x=\bn\on;\\
	(\emph{c}) & \bn x\ve\bn\bn x=\bn\ze;\\
	(\emph{d}) & \bn\bn\ze=\bn\on;\\
	(\emph{e}) & \bn\bn\on=\bn\ze;\\
	(\emph{f}) & \bn x\leftrightarrow
	\bn\bn x=\bn\on;\\
	(\emph{g}) & x\we\bn x\le\bn\bn x\le x\ve\bn x;\\
	(\emph{h}) &\bn\ze\le x\Longrightarrow\bn x=\bn\on;\\
	(\emph{i}) & \bn x=(x\ra\bn\on)\we\bn\ze;\\
	(\emph{j}) & \bn(x\ve y)=\bn x\we\bn y;\\
	(\emph{k}) & ([\bn\on,\bn\ze],\we,\ve,\bn)~\text{is
		a Boolean algebra with complementation \bn};\\
	(\emph{l}) & \bn\bn\bn x=\bn x.
	\end{array}
	\]
\end{prop}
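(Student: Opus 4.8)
The plan is to verify the twelve properties (a)--(l) of $\bn$-negation directly from the four defining identities (a)--(d) of Definition~\ref{D:bn-negation}, together with the quasi-identity~(\ref{E:tilde-quasi-idenity}) and standard Heyting-algebra manipulations. I would not attempt to prove all twelve in isolation; instead I would order them so that each later item is fed by earlier ones, thereby keeping every individual argument short. The overall strategy is to first pin down the extreme values $\bn\on$ and $\bn\ze$ and establish that $\bn$ maps into the interval $[\bn\on,\bn\ze]$, then to show $\bn x$ and $\bn\bn x$ behave as Boolean complements of one another inside that interval, and finally to extract the closed form~(i) from which the remaining facts follow almost mechanically.

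First I would prove~(a). The inequality $\bn\on\le\bn x$ is immediate from the quasi-identity~(\ref{E:tilde-quasi-idenity}) applied to $x\le\on$, and $\bn x\le\bn\ze$ follows likewise from $\ze\le x$. Next I would establish~(b), $\bn x\we\bn\bn x=\bn\on$: using~(b) of Definition~\ref{D:bn-negation} with $x$ replaced by $\bn x$ gives $\bn x\we\bn\bn x\le\bn\on$, and the reverse inequality is part~(a) (since $\bn\on$ is below both conjuncts). The dual~(c), $\bn x\ve\bn\bn x=\bn\ze$, is the harder of the two: the bound $\bn\ze\le\bn x\ve\bn\bn x$ comes from~(c) of the definition applied to $\bn x$, while $\bn x\ve\bn\bn x\le\bn\ze$ follows from~(a). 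The special cases~(d) $\bn\bn\ze=\bn\on$ and~(e) $\bn\bn\on=\bn\ze$ should then fall out by substituting $x=\ze$ and $x=\on$ into (b)/(c) and simplifying with~(a), together with the normalization identity~(d) of the definition. Property~(f) is just a restatement that $\bn x$ and $\bn\bn x$ are equal in the quotient, i.e.\ that $\bn x\leftrightarrow\bn\bn x=\bn\on$; I expect it to follow by combining (b) and (c) to show $\bn x$ and $\bn\bn x$ are complements in the bounded distributive lattice $[\bn\on,\bn\ze]$, forcing the biconditional down to the bottom $\bn\on$ of that interval.

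For~(g) I would sandwich $\bn\bn x$ between $x\we\bn x$ and $x\ve\bn x$, using~(a) of the definition ($x\le\square$-style reasoning) and the monotonicity facts already in hand. Property~(h) is a direct consequence of~(a): if $\bn\ze\le x$ then, by~(a), $\bn x\le\bn\ze\le\cdots$ forces $\bn x\le\bn\on$, and the reverse is again~(a). The pivotal step is~(i), the closed form $\bn x=(x\ra\bn\on)\we\bn\ze$; here I would show the two inequalities separately, using the residuation property of $\ra$ together with~(b) of the definition for $\le$ and~(a) together with the Boolean-complement relations for $\ge$. Once~(i) is available, the distributive law~(j) $\bn(x\ve y)=\bn x\we\bn y$ follows by distributing $\ra$ over $\ve$ in the antecedent; property~(k), that $([\bn\on,\bn\ze],\we,\ve,\bn)$ is a Boolean algebra, is then the formal packaging of (a)--(c) and (f), asserting that the restricted lattice is distributive (inherited from $\fA$), bounded by $\bn\on,\bn\ze$, and complemented by $\bn$; and~(l) $\bn\bn\bn x=\bn x$ follows by applying $\bn$ to the idempotent behaviour established in (k) or directly from~(b)/(c).

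The main obstacle I anticipate is~(i): deriving the explicit term $(x\ra\bn\on)\we\bn\ze$ requires combining residuation with all four defining identities simultaneously, and getting both inequalities is delicate because the naive bounds overshoot the interval $[\bn\on,\bn\ze]$. I would expect to lean heavily on the normalization identity~(d), $\bn\ze\ra\bn\on=\bn\on$, to control the interaction between the implication and the top endpoint $\bn\ze$. Everything after~(i) I regard as routine, so the proof effort should concentrate on (c) and (i), with (a), (b) and the monotonicity quasi-identity as the reusable lemmas underpinning the rest.
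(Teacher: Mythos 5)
Your route coincides with the paper's almost item for item: (a) from the monotonicity quasi-identity \eqref{E:tilde-quasi-idenity}, (b) and (c) by instantiating Definition~\ref{D:bn-negation}.b,c at $\bn x$ and bounding the other side with (a), the special cases (d)/(e), the closed form (i) as the pivot, and (j)--(l) harvested from (i) and the Boolean structure of $[\bn\on,\bn\ze]$ in (k). One spot in your sketch, however, would fail as literally written: in (f) you claim that the complement relations (b)/(c) alone ``force the biconditional down to the bottom'' of the interval. They do not. The correct reduction, as in the paper, is $(\bn x\ra\bn\bn x)\we(\bn\bn x\ra\bn x)=(\bn x\ve\bn\bn x)\ra(\bn x\we\bn\bn x)=\bn\ze\ra\bn\on$, and at this point item (f) is \emph{exactly} the normalization identity, Definition~\ref{D:bn-negation}.d; without it the claim is false in general, since complementation in a bounded distributive lattice gives only $q\ra p$, which can strictly exceed $p$ (already in a Boolean algebra, where $q\ra p=\neg q\ve p$). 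So the dependency you anticipated for (i) actually belongs to (f).

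Conversely, (i) is less delicate than you feared and does not need Definition~\ref{D:bn-negation}.d at all: the $\le$ half is, as you say, residuation of Definition~\ref{D:bn-negation}.b plus (a), while the $\ge$ half comes from Definition~\ref{D:bn-negation}.a instantiated at $y=\bn\on$, giving $(x\ra\bn\on)\we\bn\bn\on\le\bn x$, after which (e) rewrites $\bn\bn\on$ as $\bn\ze$ --- this is the paper's argument, and your appeal to ``(a) together with the Boolean-complement relations'' amounts to the same thing. A small bonus on your side: your sandwich for (g), namely $x\we\bn x\le\bn\on\le\bn\bn x\le\bn\ze\le x\ve\bn x$ (Definition~\ref{D:bn-negation}.b, then (a) twice, then Definition~\ref{D:bn-negation}.c), is actually shorter than the paper's chain, which detours through $\bn\bn\ze$ and $\bn\bn\on$ via \eqref{E:tilde-quasi-idenity}. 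With the (f) dependency repaired, the proposal is sound and essentially the paper's proof.
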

\begin{proof}
	(\text{a}): Since $x\ra\on\le\bn\on\ra\bn x$, we derive $\bn\on\le\bn x$. On the other hand, beginning with
	$\ze\ra\bn x\le \bn x\ra\bn\ze$, we obtain $\bn x\le\bn\ze$.
	
	(b): From (a) just proved we have:
	$\bn\on\le\bn x$ and $\bn\on\le\bn\bn x$ and hence $\bn\on\le\bn x\we\bn\bn x$. And by virtue of Definition~\ref{D:bn-negation}.b, we get $\bn x\we\bn\bn x=\bn\on$.
	
	(c): According to (a) above, $\bn x\le\bn\ze$ and $\bn\bn x\le\bn\ze$ and hence $\bn x\ve\bn\bn x\le\bn\ze$. Then, with help of Definition~\ref{D:bn-negation}.c, we get
	$\bn x\ve\bn\bn x=\bn\ze$.
	
	(d): Using (a) above twice and, then, (b), we obtain:
	\[
	\bn\on\le\bn\bn\ze=\bn\ze\we\bn\bn\ze=\bn\on.
	\]
	
	(e): We use (c) and (a) twice to obtain:
	\[
	\bn\ze=\bn\on\ve\bn\bn\on\le\bn\bn\on\le\bn\ze.
	\]
	
	(f): Using (c) and (b) above and Definition~\ref{D:bn-negation}.d, we get:
	\[
	\begin{array}{rl}
	\bn x\leftrightarrow\bn\bn x
	&=(\bn x\rightarrow\bn\bn x)\we(\bn\bn x\rightarrow\bn x)\\
	&=(\bn x\rightarrow\bn x\we\bn\bn x)
	\we(\bn\bn x\rightarrow\bn x\we\bn\bn x)\\
	&=(\bn x\ve\bn\bn x)\rightarrow
	(\bn x\we\bn\bn x)\\
	&=\bn\ze\rightarrow\bn\on~~~[\text{(c) and (b)}]\\
	&=\bn\on.~~~[\text{Definition~\ref{D:bn-negation}.d}]
	\end{array}
	\]
	
	(g): We obtain:
	\[
	\begin{array}{rl}
		x\we\bn x &\le\bn\on=\bn\bn\ze~~~[\text{Definition~\ref{D:bn-negation}.b and (d)}]\\
		&\le\bn\bn x~~~[\text{\eqref{E:tilde-quasi-idenity} twice}]\\
		&\le\bn\bn\on=\bn\ze\le x\ve\bn x.~~~[\text{\eqref{E:tilde-quasi-idenity} twice, (e) and Definition~\ref{D:bn-negation}.c}]
	\end{array}
	\]
	
	(h): Using \eqref{E:tilde-quasi-idenity}, (a) and (d), we receive:
	\[
	\bn\ze\le x\Longrightarrow
	\bn\on\le\bn x\le\bn\bn\ze=\bn\on.
	\]
	
	(i): In virtue of Definition~\ref{D:bn-negation}.b and (a), we have:
	$\bn x\le(x\ra\bn\on)\we\bn\ze$.
	
	In virtue of Definition~\ref{D:bn-negation}.a, we have:
	$(x\ra\bn\on)\we\bn\bn\on\le\bn x$. Then, we use (e) to get $(x\ra\bn\on)\we\bn\ze\le\bn x$.
	
	(j): With help of (i), we get: 
	\[
	\begin{array}{rl}
	\bn(x\ve y) &=
	(x\ve y\ra\bn\on)\we\bn\ze\\
	&=(x\ra\bn\on)\we(y\ra\bn\on)\we\bn\ze\\
	&=\bn x\we\bn y.
	\end{array}
	\]
	
	(k): First of all, we note that $[\bn\on,\bn\ze]$ is a distributive bounded lattice. Let us take any $x\in[\bn\on,\bn\ze]$. According to (a),
	$\bn x\in[\bn\on,\bn\ze]$. According to Definition~\ref{D:bn-negation}.b and \ref{D:bn-negation}.c, $x\we\bn x=\bn\on$ and $x\ve\bn x=\bn\ze$. Therefore, $\bn x$ is a complement of $x$ in $[\bn\on,\bn\ze]$.
	
	(l) From~\eqref{E:tilde-quasi-idenity}, we derive that $\bn x\in[\bn\on,\bn\ze]$. Then, we apply (k).
\end{proof}

\begin{cor}\label{C:bn-gives-pair}
	Given a \bn-negation, $(\bn\on,\bn\ze)$ is an $\cE$-pair. Hence $\neg\bn\ze=\ze$.
\end{cor}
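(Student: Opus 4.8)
The plan is to verify directly that the pair $(\bn\on,\bn\ze)$ satisfies the three defining conditions (a)--(c) of Definition~\ref{D:enrichable} for $\bn\ze$ to enrich $\bn\on$, and then to read off $\neg\bn\ze=\ze$ from Proposition~\ref{P:negation}. In other words, I would take $a=\bn\on$ and $\astar=\bn\ze$ and check that $\astar$ enriches $a$.

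First I would check condition (a), $\bn\on\le\bn\ze$; this is immediate from Proposition~\ref{P:bn-properties}(a), which gives $\bn\on\le\bn x\le\bn\ze$ for every $x$, in particular at $x=\on$ or $x=\ze$. Condition (b), $\bn\ze\ra\bn\on=\bn\on$, is nothing but Definition~\ref{D:bn-negation}(d), so nothing new is required there.

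The only condition calling for an argument is (c), namely $\bn\ze\le x\ve(x\ra\bn\on)$ for all $x$. Here I would combine two earlier facts: Definition~\ref{D:bn-negation}(c) gives $\bn\ze\le x\ve\bn x$, while Proposition~\ref{P:bn-properties}(i) expresses $\bn x=(x\ra\bn\on)\we\bn\ze$, whence $\bn x\le x\ra\bn\on$. Monotonicity of $\ve$ then yields $x\ve\bn x\le x\ve(x\ra\bn\on)$, and chaining the two inequalities gives $\bn\ze\le x\ve(x\ra\bn\on)$, as needed. With (a)--(c) in hand, $\bn\ze$ enriches $\bn\on$, so $(\bn\on,\bn\ze)$ is an $\cE$-pair.

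Finally, the concluding equality $\neg\bn\ze=\ze$ follows at once by applying Proposition~\ref{P:negation} to the enriching element $\bn\ze$, since that proposition asserts precisely that any element enriching another is dense. I expect no genuine obstacle here: each step is a direct appeal to a previously established property, and the only place involving any computation at all is the two-line verification of condition (c).
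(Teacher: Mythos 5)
Your proposal is correct and takes essentially the same approach as the paper: both verify conditions (a)--(c) of Definition~\ref{D:enrichable} for the pair $(\bn\on,\bn\ze)$ directly and then obtain $\neg\bn\ze=\ze$ from Proposition~\ref{P:negation}. The only cosmetic difference is in condition (c), where the paper derives $\bn x\le x\ra\bn\on$ from Definition~\ref{D:bn-negation}.b by residuation, while you read the same inequality off Proposition~\ref{P:bn-properties}.i.
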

\begin{proof}
	Indeed, from Proposition~\ref{P:bn-properties}.a, we derive that $\bn\on\le\bn\ze$. And Definition~\ref{D:bn-negation}.d gives us $\bn\ze\ra\bn\on\le\bn\on$. Further, in virtue of Definitions~\ref{D:bn-negation}.c and \ref{D:bn-negation}.b, we obtain that
	$\bn\ze\le x\ve(x\ra\bn\on)$.
	
	The equality $\neg\bn\ze=\ze$ follows from Proposition~\ref{P:negation}
\end{proof}
\begin{cor}
	Given a Heyting algebra $\fA$, two \bn-negations $\bn_{1}$ and $\bn_{2}$ are equal in $\fA$ if and only if $\bn_{1}\on=\bn_{2}\on$.
\end{cor}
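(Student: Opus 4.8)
The forward implication is immediate: if $\bn_1$ and $\bn_2$ coincide as operations on $\fA$, then in particular $\bn_1\on=\bn_2\on$. The substance lies in the converse, so I would assume $\bn_1\on=\bn_2\on$ and aim to prove $\bn_1 x=\bn_2 x$ for every $x\in|\fA|$.

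The plan rests on the normal form for a $\bn$-negation recorded in Proposition~\ref{P:bn-properties}.(i), namely $\bn x=(x\ra\bn\on)\we\bn\ze$. This exhibits each operation $\bn_i$ as completely determined by just the two values $\bn_i\on$ and $\bn_i\ze$. Hence it suffices to show that the hypothesis $\bn_1\on=\bn_2\on$ already forces $\bn_1\ze=\bn_2\ze$; once both equalities are in hand, substituting them into the normal form yields $\bn_1 x=(x\ra\bn_1\on)\we\bn_1\ze=(x\ra\bn_2\on)\we\bn_2\ze=\bn_2 x$ for all $x$, as desired.

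To force $\bn_1\ze=\bn_2\ze$ I would appeal to uniqueness of enrichment rather than to any direct Heyting computation. By Corollary~\ref{C:bn-gives-pair}, for each $i$ the pair $(\bn_i\on,\bn_i\ze)$ is an $\cE$-pair, i.e.\ $\bn_i\ze$ enriches $\bn_i\on$. Under the assumption $\bn_1\on=\bn_2\on$, both $\bn_1\ze$ and $\bn_2\ze$ therefore enrich one and the same element. But Proposition~\ref{P:unique} states that an enrichable element can be enriched by only one element, so $\bn_1\ze=\bn_2\ze$, which closes the argument.

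I do not anticipate a genuine obstacle. The only real step of insight is recognizing that $\bn\ze$ is pinned down by $\bn\on$ not through a syntactic manipulation of identities (d),(e),(i)---which by themselves appear to yield only tautologies when one tries to extract $\bn\ze$ from $\bn\on$---but through the uniqueness of the enrichment supplied by combining Corollary~\ref{C:bn-gives-pair} with Proposition~\ref{P:unique}.
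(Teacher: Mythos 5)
Your proof is correct and follows exactly the paper's own route: reduce the problem via Proposition~\ref{P:bn-properties}.i to showing $\bn_1\ze=\bn_2\ze$, then combine Corollary~\ref{C:bn-gives-pair} with the uniqueness of enrichment (Proposition~\ref{P:unique}). Your write-up is in fact slightly more explicit than the paper's, which leaves the final substitution into the normal form implicit.
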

\begin{proof}
	Assume that $\bn_{1}\on=\bn_{2}\on$. In view of Proposition~\ref{P:bn-properties}.i, we need to show that  $\bn_{1}\ze=\bn_{2}\ze$. According to Corollary~\ref{C:bn-gives-pair}, both pairs $(\bn_{1}\on,\bn_{1}\ze)$ and
	$(\bn_{2}\on,\bn_{2}\ze)$ belong to $\cE_{\mathfrak{A}}$. In virtue of Proposition~\ref{P:unique}, $\bn_{1}\ze
	=\bn_{2}\ze$. The converse is obvious.
\end{proof}

Now we give an example of how a \bn-negation can be defined in a Heyting algebra.
\begin{prop}\label{P:bn-negation-1}
	Given a Heyting algebra {\fA}, if $(a,\astar)$ is an \cE-pair, then
	the operation
	\[
	\bn x=(x\ra a)\we\astar
	\]
	is a \bn-negation in {\fA} so
	that $a=\bn\on$ and $\astar=\bn\ze$.
\end{prop}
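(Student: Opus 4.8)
The plan is to verify the four defining identities of Definition~\ref{D:bn-negation} directly for the proposed operation $\bn x=(x\ra a)\we\astar$, after first pinning down the two boundary values $\bn\on$ and $\bn\ze$. Computing these values, since $\on\ra a=a$ and, by clause~(a) of the enrichment, $a\le\astar$, we get $\bn\on=(\on\ra a)\we\astar=a\we\astar=a$; likewise $\ze\ra a=\on$ gives $\bn\ze=\on\we\astar=\astar$. This already settles the concluding assertions $a=\bn\on$ and $\astar=\bn\ze$, and it is the only place where clause~(a) of the $\cE$-pair is invoked.

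Next I would dispose of identities (a), (b) and (d). Identities (a) and (b) rely only on the Heyting structure. For (a), by residuation it suffices to show $(x\ra y)\we(y\ra a)\we\astar\le(x\ra a)\we\astar$; the $\astar$-factor survives on both sides, and $(x\ra y)\we(y\ra a)\le x\ra a$ is merely transitivity of implication, i.e. $x\we(x\ra y)\we(y\ra a)\le a$. For (b), modus ponens gives $x\we(x\ra a)\le a$, whence $x\we\bn x\le a\we\astar\le a=\bn\on$. Identity (d) is immediate from clause~(b) of the enrichment: $\bn\ze\ra\bn\on=\astar\ra a=a=\bn\on$.

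The crux is identity (c), namely $\bn\ze\le x\ve\bn x$, that is $\astar\le x\ve((x\ra a)\we\astar)$; this is the only step genuinely using clause~(c) of the $\cE$-pair, $\astar\le x\ve(x\ra a)$. I expect this to be the main obstacle, because here $\astar$ is buried inside a meet on the right-hand side rather than appearing at top level. I would unfold the right-hand side by distributivity, writing $x\ve((x\ra a)\we\astar)=(x\ve(x\ra a))\we(x\ve\astar)$, and then check that $\astar$ lies below each of the two factors: below $x\ve(x\ra a)$ by the enrichment clause~(c), and below $x\ve\astar$ trivially. Consequently $\astar$ lies below their meet, which is exactly $x\ve\bn x$, giving (c). With (a)--(d) all established, $\bn$ satisfies Definition~\ref{D:bn-negation} and is therefore a $\bn$-negation with $\bn\on=a$ and $\bn\ze=\astar$, completing the proof.
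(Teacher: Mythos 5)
Your proof is correct and follows essentially the same route as the paper: a direct verification of clauses (a)--(d) of Definition~\ref{D:bn-negation}, using the Heyting structure for (a) and (b), the enrichment clause (c) together with distributivity for (c), and the enrichment clause (b) for (d). The only cosmetic difference is that you first reduce $\bn\on=a$ and $\bn\ze=\astar$ and make the distributivity step explicit, whereas the paper works with the unreduced forms $(\on\ra a)\we\astar$ and $(\ze\ra a)\we\astar$ and leaves that step implicit.
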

\begin{proof}
	We have to check that the definition of $\bn x$ above satisfies the properties (\text{a})--(\text{d}) of Definition~\ref{D:bn-negation}.
	
	(a): In any Heyting algebra, the following holds:
	\[
	x\ra y\le (y\ra a)\ra(x\ra a)\le
	(y\ra a)\we a^{\ast}\ra(x\ra a)\we a^{\ast}.
	\]
	
	(b): We also have:
	\[
	x\we(x\ra a)\we a^{\ast}=
	x\we a\we a^{\ast}\le a\we a^{\ast}=
	\on\we(\on\ra a)\we a^{\ast}.
	\]
	
	(c): We first note that $a^{\ast}=(\ze\ra a)\we a^{\ast}$. Also, since $a^{\ast}\le x\ve(x\ra a)$, we have: $a^{\ast}\le x\ve(x\ra a)\we a^{\ast}$. Thus $(\ze\ra a)\we a^{\ast}
	\le x\ve(x\ra a)\we a^{\ast}$.
	
	(d): We notice that $a^{\ast}=(\ze\ra a)\we a^{\ast}$ and $a=(\on\ra a)\we a^{\ast}$. Thus the true inequality
	$a^{\ast}\ra a\le a$ implies
	$(\ze\ra a)\we a^{\ast}\ra
	(\on\ra a)\we a^{\ast}\le
	(\on\ra a)\we a^{\ast}$,
	that is $\bn\ze\ra\bn\on\le\bn\on$.
\end{proof}

The last proposition inspires the next definition.
\begin{defi}[$t_{\varepsilon}$-negation, $\varepsilon_{t}$ pair]\label{D:t-negation}
	Given an $\cE$-pair $\varepsilon=(a,a^{\ast})$ in a Heyting algebra, we define a \bn-negation as follows:
	\[
	t_{\varepsilon}(x)=(x\ra a)\we a^{\ast}.
	\]
	On the other hand, given a \bn-negation $t(x)$, we denote $\varepsilon_{t}=(t(\on),t(\ze))$.
\end{defi}

\begin{prop}\label{P:t(x)}
	If $t(x)$ is a \bn-negation, then
	\[
	t_{\varepsilon_{t}}(x)=t(x).
	\]
	If $\varepsilon=(a,a^{\ast})$ is an $\cE$-pair, then
	\[
	\varepsilon_{t_{\varepsilon}}=\varepsilon.
	\]
\end{prop}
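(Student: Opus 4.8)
The plan is to read off each of the two identities directly from results already proved, since the statement is just the assertion that $\varepsilon\mapsto t_{\varepsilon}$ and $t\mapsto\varepsilon_{t}$ are mutually inverse correspondences between $\cE$-pairs and $\bn$-negations. No genuinely new computation is required; the work consists entirely in identifying which earlier facts to cite.

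For the first identity I would start from a $\bn$-negation $t(x)$. By Corollary~\ref{C:bn-gives-pair} the pair $\varepsilon_{t}=(t(\on),t(\ze))$ is indeed an $\cE$-pair, so $t_{\varepsilon_{t}}$ is legitimately formed, and Definition~\ref{D:t-negation} unfolds it to $t_{\varepsilon_{t}}(x)=(x\ra t(\on))\we t(\ze)$. The decisive input is then Proposition~\ref{P:bn-properties}.i applied to $t$ itself, which supplies the normal form $t(x)=(x\ra t(\on))\we t(\ze)$; comparing the two expressions yields $t_{\varepsilon_{t}}(x)=t(x)$ at once.

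For the second identity I would start from an $\cE$-pair $\varepsilon=(a,a^{\ast})$ and invoke Proposition~\ref{P:bn-negation-1}. That proposition tells us not only that $t_{\varepsilon}(x)=(x\ra a)\we a^{\ast}$ is a $\bn$-negation, but, crucially, that $t_{\varepsilon}(\on)=a$ and $t_{\varepsilon}(\ze)=a^{\ast}$. Hence, unfolding the definition, $\varepsilon_{t_{\varepsilon}}=(t_{\varepsilon}(\on),t_{\varepsilon}(\ze))=(a,a^{\ast})=\varepsilon$.

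I do not anticipate a real obstacle: each direction is a one-line consequence of a prior result, with Proposition~\ref{P:bn-properties}.i furnishing the normal form of an arbitrary $\bn$-negation and Proposition~\ref{P:bn-negation-1} furnishing the values of $t_{\varepsilon}$ at $\on$ and $\ze$. The only point that needs a moment's attention is the well-formedness issue in the first identity, namely checking that $\varepsilon_{t}$ really lands in $\cE_{\mathfrak{A}}$ before $t_{\varepsilon_{t}}$ can be formed, which is precisely what Corollary~\ref{C:bn-gives-pair} guarantees.
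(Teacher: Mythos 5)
Your proposal is correct and follows essentially the same route as the paper's own proof: Definition~\ref{D:t-negation} together with Proposition~\ref{P:bn-properties}.i for the first identity, and Definition~\ref{D:t-negation} together with Proposition~\ref{P:bn-negation-1} for the second. Your additional remark that Corollary~\ref{C:bn-gives-pair} is needed to guarantee that $\varepsilon_{t}$ is an $\cE$-pair (so that $t_{\varepsilon_{t}}$ is well formed) is a well-formedness point the paper leaves implicit, and it makes the argument slightly more careful without changing its substance.
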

\begin{proof}
	According to Definition~\ref{D:t-negation} and Proposition~\ref{P:bn-properties}.i,
	\[
	t_{\varepsilon_{t}}(x)=(x\ra t(\on))\we t(\ze)=t(x).
	\]
	
	Further, according to Definition~\ref{D:t-negation} and Proposition~\ref{P:bn-negation-1},
	\[
	\varepsilon_{t_{\varepsilon}}=(t_{\varepsilon}(\on),t_{\varepsilon}(\ze))=(a,a^{\ast}).
	\]
\end{proof}
\begin{cor}\label{C:bn-pair-isomorphism}
	Given a Heyting algebra $\fA$, there is a one-one correspondence between  $\cE_{\mathfrak{A}}$ and \bn-negations in $\fA$.
\end{cor}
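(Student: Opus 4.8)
The plan is to realize the two constructions of Definition~\ref{D:t-negation} as a pair of mutually inverse maps, and thereby read off the bijection directly. Write $\mathcal{N}(\fA)$ for the set of all $\bn$-negations in $\fA$. First I would define
\[
\Phi:\cE_{\mathfrak{A}}\ra\mathcal{N}(\fA),\qquad \Phi(\varepsilon)=t_{\varepsilon},
\]
where for $\varepsilon=(a,\astar)$ we set $t_{\varepsilon}(x)=(x\ra a)\we\astar$. The well-definedness of $\Phi$ — that $t_{\varepsilon}$ is genuinely a $\bn$-negation whenever $(a,\astar)$ is an $\cE$-pair — is exactly the content of Proposition~\ref{P:bn-negation-1}.

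In the other direction I would define
\[
\Psi:\mathcal{N}(\fA)\ra\cE_{\mathfrak{A}},\qquad \Psi(t)=\varepsilon_{t}=(t(\on),t(\ze)).
\]
Here the point to check is that $\Psi$ actually lands in $\cE_{\mathfrak{A}}$, i.e. that $(t(\on),t(\ze))$ really is an $\cE$-pair; this is precisely Corollary~\ref{C:bn-gives-pair}.

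Finally I would verify that $\Phi$ and $\Psi$ are mutually inverse, and both composites are supplied by Proposition~\ref{P:t(x)}: the identity $t_{\varepsilon_{t}}(x)=t(x)$ says $\Phi\circ\Psi=\mathrm{id}_{\mathcal{N}(\fA)}$, while $\varepsilon_{t_{\varepsilon}}=\varepsilon$ says $\Psi\circ\Phi=\mathrm{id}_{\cE_{\mathfrak{A}}}$. Hence $\Phi$ is a bijection with inverse $\Psi$, which is the asserted one-one correspondence. I do not expect any genuine obstacle here: the corollary is essentially bookkeeping on top of Proposition~\ref{P:bn-negation-1}, Corollary~\ref{C:bn-gives-pair}, and Proposition~\ref{P:t(x)}, all already established. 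The only point demanding any care is the well-definedness of the two maps — that each has the correct codomain — and that is exactly what the two cited results provide.
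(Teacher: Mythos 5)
Your proposal is correct and is essentially the paper's own argument: the corollary is stated as an immediate consequence of Proposition~\ref{P:t(x)}, with well-definedness of the two maps of Definition~\ref{D:t-negation} supplied by Proposition~\ref{P:bn-negation-1} and Corollary~\ref{C:bn-gives-pair}, exactly as you cite them. Your explicit packaging of $\Phi$ and $\Psi$ as mutually inverse maps is just the bookkeeping the paper leaves implicit.
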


\begin{defi}[\bn-expansion, Heyting reduct, class $K$]\label{D:expansion}
	An algebra $(\fA,\bn)$, where
	$\fA$ is a Heyting algebra with a \bn-negation, is called a \bn-expansion $($a tilde-expansion of {\fA}$)$. Also, we will call {\fA} the Heyting reduct $($or simply reduct$)$ of the \bn-expansion $(\fA,\bn)$. The abstract class of all \bn-expansions is denoted by $K$.
\end{defi}

\begin{prop}\label{P:variety}
	Class $K$ is a variety.
\end{prop}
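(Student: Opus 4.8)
The plan is to exhibit $K$ as an equationally definable class in the signature $\langle\we,\ve,\ra,\bn,\ze,\on\rangle$ and then to invoke Birkhoff's theorem that the equational classes are exactly the varieties. The starting point is the well-known fact that Heyting algebras themselves form a variety, so the Heyting reduct of any member of $K$ is already pinned down by finitely many identities in the reduced signature $\langle\we,\ve,\ra,\ze,\on\rangle$. What remains is to convert the four defining conditions (a)--(d) of a $\bn$-negation (Definition~\ref{D:bn-negation}) into honest identities.

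The key observation I would use is the standard translation of the partial order into equations, available in every Heyting algebra: for all $x,y$,
\[
x\le y\Longleftrightarrow x\ra y=\on\Longleftrightarrow x\we y=x.
\]
Applying this termwise, each inequality among (a)--(d) becomes an identity. For instance, condition (a), $x\ra y\le\bn y\ra\bn x$, is equivalent to the identity $(x\ra y)\ra(\bn y\ra\bn x)=\on$; condition (b) becomes $(x\we\bn x)\ra\bn\on=\on$; condition (c) becomes $\bn\ze\ra(x\ve\bn x)=\on$; and condition (d) is already stated in equational form as $\bn\ze\ra\bn\on=\bn\on$. Each of these is a genuine identity, universally quantified over its free variables, living in the full signature.

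Putting these together, $K$ is precisely the class of all algebras in $\langle\we,\ve,\ra,\bn,\ze,\on\rangle$ satisfying the (finitely many) Heyting-algebra identities together with the four identities just displayed. Hence $K$ is an equational class, and therefore a variety.

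There is essentially no hard step here; the only point requiring care is that the reduction of $\le$ to an equation is genuinely carried out \emph{inside} the Heyting signature, and that the translated statements are honest identities rather than quasi-identities. The first point is a textbook property of Heyting algebras, and the second is immediate, since after the translation nothing remains as a hypothesis on the left-hand side. Should one prefer a structural argument, one could instead verify directly, via Birkhoff's $\Ho\Su\Pro$ theorem, that $K$ is closed under $\Ho$, $\Su$, and $\Pro$ by checking that each of (a)--(d), once written with $\le$ replaced by the corresponding equation, is preserved under homomorphic images, subalgebras, and products; but the equational route above is the shorter one.
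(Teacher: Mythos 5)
Your proof is correct and is exactly the argument the paper intends: its one-line proof (``follows immediately from Definition~\ref{D:bn-negation}'') relies precisely on the fact that the conditions (a)--(d), being inequalities in a Heyting algebra, are equivalent to identities via $x\le y\Leftrightarrow x\ra y=\on$, so that $K$ is equationally definable and hence a variety. You have merely written out the details that the paper leaves implicit.
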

\emph{Proof}~follows immediately from Definition~\ref{D:bn-negation}.\\

\begin{prop}\label{P:congruences=filters}
	Given a \bn-expansion $\fB=(\fA,\bn)$, there is a one-one correspondence between the
	congruences on $\fB$ and the filters of the Heyting reduct $\fA$.
\end{prop}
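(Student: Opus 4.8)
The plan is to reduce the statement to the classical correspondence between congruences and filters of a Heyting algebra, and then to check that adjoining the operation $\bn$ neither creates new congruences nor destroys old ones. Recall that for the Heyting reduct $\fA$ there is a well-known lattice bijection between the (Heyting-algebra) congruences of $\fA$ and the $\fA$-filters, given by $\theta\mapsto\set{x}{x\,\theta\,\on}$ in one direction and $F\mapsto\set{(x,y)}{(x\leftrightarrow y)\in F}$ in the other (see, e.g., \cite{rs70}). So it suffices to prove that the congruences of the $\bn$-expansion $\fB=(\fA,\bn)$ are \emph{exactly} the congruences of its Heyting reduct $\fA$; the asserted correspondence with filters then follows verbatim from the classical one.

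One inclusion is immediate: every congruence of $\fB$ is, by simply forgetting its compatibility with $\bn$, a congruence of the reduct $\fA$. For the converse, I would take an arbitrary Heyting congruence $\theta$ of $\fA$ and show it respects $\bn$, i.e.\ that $x\,\theta\,y$ implies $\bn x\,\theta\,\bn y$. The key is Proposition~\ref{P:bn-properties}.i, which expresses $\bn$ entirely in terms of the Heyting operations together with the two \emph{fixed} parameters $\bn\on$ and $\bn\ze$:
\[
\bn x=(x\ra\bn\on)\we\bn\ze.
\]
Since a Heyting congruence is compatible with $\ra$ and $\we$ and is reflexive on the constants $\bn\on$ and $\bn\ze$, the hypothesis $x\,\theta\,y$ yields $(x\ra\bn\on)\,\theta\,(y\ra\bn\on)$ and hence
\[
\bn x=(x\ra\bn\on)\we\bn\ze\;\theta\;(y\ra\bn\on)\we\bn\ze=\bn y.
\]
Thus $\theta$ is automatically a congruence of $\fB$, and the two congruence lattices coincide.

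Combining the two inclusions with the classical Heyting correspondence quoted above gives the desired one-one correspondence between the congruences of $\fB$ and the filters of $\fA$. The only real content lies in the middle step---that $\bn$ is a Heyting term-operation in the single variable $x$ once the two elements $\bn\on$ and $\bn\ze$ are regarded as fixed parameters---and this is precisely what Proposition~\ref{P:bn-properties}.i supplies; everything else is routine transfer along the known reduct-correspondence, so I do not anticipate a genuine obstacle.
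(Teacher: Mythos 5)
Your proposal is correct and follows essentially the same route as the paper: reduce to the classical filter--congruence correspondence for Heyting algebras and observe that every Heyting congruence automatically respects $\bn$. The paper certifies this compatibility directly from Definition~\ref{D:bn-negation}.a (antitonicity gives $(x\leftrightarrow y)\le(\bn x\leftrightarrow\bn y)$, so the filter-induced congruence preserves $\bn$), while you invoke the derived identity of Proposition~\ref{P:bn-properties}.i expressing $\bn$ as a parametric Heyting term --- an equally valid, interchangeable justification of the same key step.
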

\emph{Proof}~follows straightforwardly from
Definition~\ref{D:bn-negation}.a.\\

In the sequel, we will use the last proposition without reference.

So far, talking about one-element enrichment in this section, we introduced $\bn$-negation as a tool to ``materialize'' the enrichabilty of an unspecified element of a Heyting algebra. Now we will be dealing with a particular element of the algebra, which is intended to be enriched.

\begin{defi}[$\tau$-expansion, $\tau\bn$-expansion, classes $K_{\tau}$ and $K^{\ast}_{\tau}$]\label{D:tau-tilde-expasion}
	Let $\fA$ be a Heyting algebra. We enrich the signature of $\fA$ with a nullary operation $\tau$ and call $\fA_{\tau}=(\fA,\tau)$ a $\tau$-expansion of $\fA$. If we know that $\tau$ in the latter is interpreted by $a\in|\fA|$, we will denote this expansion by $\fA_{\tau_{a}}$. The \bn-expansion of $\fA_{\tau}$ that satisfies  the identity $\bn\on=\tau$ is called a $\tau\bn$-expansion $($of $\fA$$)$, in symbols $(\fA_{\tau},\bn)$.
	In $\fA_{\tau}$ and $(\fA_{\tau},\bn)$, $\fA$ is called the \textbf{Heyting reduct} $($or simply \textbf{reduct}$)$ of the former and latter and $\fA_{\tau}$ is the $\tau$-\textbf{Heyting reduct} $($or simply $\tau$-\textbf{reduct}$)$ of the latter. The equational class of all $\tau\bn$-expansions is denoted by $K_{\tau}$. The class of $\tau$-Heyting reducts of the algebras of $K_{\tau}$ is denoted by $K^{\ast}_{\tau}$.
\end{defi}

The following observation is obvious.
\begin{prop}
	The class $K_{\tau}$ is a variety.
\end{prop}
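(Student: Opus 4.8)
The plan is to exhibit $K_{\tau}$ as the class of all models of a fixed set of identities and then invoke Birkhoff's theorem, exactly mirroring the proof of Proposition~\ref{P:variety} for the class $K$. First I would fix the signature under discussion: that of a Heyting algebra augmented by the unary symbol $\bn$ and the nullary (constant) symbol $\tau$. By Definition~\ref{D:tau-tilde-expasion}, an algebra over this signature belongs to $K_{\tau}$ precisely when (i) its Heyting reduct satisfies the Heyting-algebra identities, (ii) the operation $\bn$ satisfies conditions (a)--(d) of Definition~\ref{D:bn-negation}, and (iii) the identity $\bn\on=\tau$ holds.

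Next I would observe that each of these defining conditions is an identity. The Heyting-algebra axioms are identities by assumption, and $\bn\on=\tau$ is manifestly one. For the $\bn$-negation conditions I would use the standard fact that in any Heyting algebra an inequality $s\le t$ is equivalent to the identity $s\we t=s$ (equivalently $s\ra t=\on$). Hence condition (a) $x\ra y\le\bn y\ra\bn x$, condition (b) $x\we\bn x\le\bn\on$, and condition (c) $\bn\ze\le x\ve\bn x$ each translate into an identity in the given signature, while condition (d) is already stated in the equational form $\bn\ze\ra\bn\on=\bn\on$. Collecting these, $K_{\tau}$ is exactly the class of algebras satisfying a fixed set of identities.

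Therefore $K_{\tau}$ is an equational class, and by Birkhoff's HSP theorem it is a variety. I expect no genuine obstacle: the whole content is the routine rewriting of the defining inequalities as equations, after which the conclusion is immediate --- just as Proposition~\ref{P:variety} follows at once from Definition~\ref{D:bn-negation}. The only point requiring a moment's care is confirming that adjoining the constant $\tau$ together with the single identity $\bn\on=\tau$ does not disturb equationality, which it plainly does not.
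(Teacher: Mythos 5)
Your proof is correct and takes essentially the same route the paper intends: the paper offers no argument precisely because Definition~\ref{D:tau-tilde-expasion} already presents $K_{\tau}$ as an equational class, and your explicit rewriting of conditions (a)--(d) of Definition~\ref{D:bn-negation} as identities (via $s\le t$ iff $s\ra t=\on$), together with the identity $\bn\on=\tau$, followed by Birkhoff's theorem, is exactly the routine verification being left to the reader, just as in Proposition~\ref{P:variety}.
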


In Section~\ref{S:localization-proof-theoretic-0}, we will see that not only the class $K^{\ast}_{\tau}$ is a variety, but one can prove that the class of all $\tau$-Heyting reducts of any subvariety of $K_{\tau}$ whose equational theory is defined by $\Lantau$-formulas is a subvariety of $K^{\ast}_{\tau}$.

\begin{defi}[packing, relation $\vartriangleleft$]\label{D:packing}
	Suppose $\fA_{\tau}\!\!\suba\!\!\fB_{\tau}$
	and $(\fB_{\tau},\bn)$ is a $\tau\bn$-expansion generated by $|\fA|$. Then, we say that $\fA_{\tau}$ is packed in $\fB_{\tau}$$;$ symbolically
	$\fA_{\tau}\pack\fB_{\tau}$. If $\fA_{\tau}$ is packed in $\fB_{\tau}$, then $(\fA_{\tau},\bn)$ can be regarded as a partial algebra w.r.t. $\bn$ and as such is a relative subalgebra of a $($full$)$ algebra $(\fB_{\tau},\bn)$
	$($in the sense of~\cite{gra79}, \S13$)$; in this case, we also say that $(\fA_{\tau},\bn)$ is packed in 
	$(\fB_{\tau},\bn)$, denoting this by
	$(\fA_{\tau},\bn)\pack(\fB_{\tau},\bn)$.
\end{defi}

\begin{prop}\label{P:generating}
	If $(\fA_{\tau},\bn)\pack(\fB_{\tau},\bn)$,
	then $\fB_{\tau}$ is generated as a $\tau$-expansion by $|\fA|\cup\lbrace\bn\ze\rbrace$. Conversely, if $(\fA_{\tau},\bn)$ is a relative subalgebra of a $\tau\bn$-expansion $(\fB_{\tau},\bn)$ and the latter is generated as a $\tau$-expansion by $|\fA|\cup\lbrace\bn\ze\rbrace$, then 
	$(\fA_{\tau},\bn)\vartriangleleft(\fB_{\tau},\bn)$.
\end{prop}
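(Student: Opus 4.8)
The plan is to reduce both implications to the single algebraic identity of Proposition~\ref{P:bn-properties}.i, namely $\bn x=(x\ra\bn\on)\we\bn\ze$, together with the defining identity $\bn\on=\tau$ of a $\tau\bn$-expansion (Definition~\ref{D:tau-tilde-expasion}). The point of that identity is that, once the two elements $\bn\on=\tau$ and $\bn\ze$ are available, the operation $\bn$ becomes definable from the $\tau$-operations $\we$ and $\ra$. Hence enlarging the signature by $\bn$ is interchangeable with enlarging the generating set by $\bn\ze$, the element $\tau$ being already a constant of the $\tau$-signature.

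For the forward implication I would argue as follows. Suppose $(\fA_{\tau},\bn)\pack(\fB_{\tau},\bn)$, so by Definition~\ref{D:packing} we have $\fA_{\tau}\suba\fB_{\tau}$ and $(\fB_{\tau},\bn)$ is generated as a $\tau\bn$-expansion by $|\fA|$. Let $C$ be the universe of the $\tau$-subalgebra of $\fB_{\tau}$ generated by $|\fA|\cup\lbrace\bn\ze\rbrace$. Then $C$ contains $|\fA|$, is closed under $\we,\ve,\ra,\neg$, and contains the interpretations of $\ze,\on$ and $\tau$. The key step is to check that $C$ is closed under $\bn$: for $x\in C$, Proposition~\ref{P:bn-properties}.i gives $\bn x=(x\ra\tau)\we\bn\ze$ (using $\bn\on=\tau$), and the right-hand side lies in $C$ since $\tau\in C$, $\bn\ze\in C$, and $C$ is a $\tau$-subalgebra. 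Thus $C$ is a $\tau\bn$-subalgebra containing $|\fA|$, and because $(\fB_{\tau},\bn)$ is generated by $|\fA|$ we conclude $C=|\fB|$; that is, $\fB_{\tau}$ is generated as a $\tau$-expansion by $|\fA|\cup\lbrace\bn\ze\rbrace$.

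For the converse, assume $(\fA_{\tau},\bn)$ is a relative subalgebra (w.r.t. $\bn$) of the $\tau\bn$-expansion $(\fB_{\tau},\bn)$, whence in particular $\fA_{\tau}\suba\fB_{\tau}$, so the first clause of Definition~\ref{D:packing} is in force. It remains to show that $(\fB_{\tau},\bn)$ is generated as a $\tau\bn$-expansion by $|\fA|$. Since $\ze\in|\fA|$ (the zero being preserved by $\fA_{\tau}\suba\fB_{\tau}$), the element $\bn\ze$ is obtained by applying $\bn$ to a generator, and so it belongs to the $\tau\bn$-subalgebra $D$ generated by $|\fA|$. Therefore $D$ contains $|\fA|\cup\lbrace\bn\ze\rbrace$ and is closed under all $\tau$-operations; as $(\fB_{\tau},\bn)$ is generated as a $\tau$-expansion by $|\fA|\cup\lbrace\bn\ze\rbrace$, we get $D=|\fB|$. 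Hence $(\fB_{\tau},\bn)$ is generated by $|\fA|$ as a $\tau\bn$-expansion, which together with $\fA_{\tau}\suba\fB_{\tau}$ yields $(\fA_{\tau},\bn)\pack(\fB_{\tau},\bn)$ by Definition~\ref{D:packing}.

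I do not anticipate a genuine obstacle: the only real content is the elimination of $\bn$ via Proposition~\ref{P:bn-properties}.i, the remainder being careful bookkeeping with the two notions of ``generated'', for the $\tau$-signature versus the $\tau\bn$-signature. The one point deserving care is the reading of ``relative subalgebra'' in the converse; I take it in the sense used in Definition~\ref{D:packing}, where $\bn$ is the sole partial operation and $\fA_{\tau}$ is a genuine $\tau$-subalgebra of $\fB_{\tau}$, which is exactly what recovers the clause $\fA_{\tau}\suba\fB_{\tau}$ of packing.
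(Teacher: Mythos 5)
Your proof is correct and takes essentially the same route as the paper: the paper's own proof consists precisely of the observation that the first part ``follows straightforwardly by the property Proposition~\ref{P:bn-properties}.i'' (the identity $\bn x=(x\ra\tau)\we\bn\ze$, which lets $\bn$ be eliminated in favour of the extra generator $\bn\ze$) and that the second part is obvious. Your write-up is simply the careful expansion of that one-line argument, including the correct reading of ``relative subalgebra'' that recovers $\fA_{\tau}\suba\fB_{\tau}$.
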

\begin{proof}
The first part follows straightforwardly by the property Proposition~\ref{P:bn-properties}.i. The second part is obvious.	
\end{proof} 
\begin{prop}\label{P:bn-closeness}	
	Let $(\fB_{\tau},\bn)$ be a $\tau\bn$-expansion and let $(\fA_{\tau},\bn)$ be a relative subalgebra of $(\fB_{\tau},\bn)$. Then the following conditions are equivalent$:$	
{\em	\[
	\begin{array}{rl}
	(\text{a}) & \fA_{\tau}\mbox{ is closed under \bn};\\
	(\text{b}) & \bn a\in\fA_{\tau}\mbox{ and }\bn\bn a\in\fA_{\tau},\mbox{ for some
		$a\in|\fA|$};\\
	(\text{c}) & \bn\ze\in\fA_{\tau}.
	\end{array}
	\]}
\end{prop}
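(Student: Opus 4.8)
The plan is to establish the three implications (a)$\Ra$(b)$\Ra$(c)$\Ra$(a), closing the cycle. The whole argument rests on two facts already recorded in Proposition~\ref{P:bn-properties}: the term-definition $\bn x=(x\ra\bn\on)\we\bn\ze$ (part (i)) and the identity $\bn x\ve\bn\bn x=\bn\ze$ (part (c)). The first says that $\bn$ is definable from the Heyting operations together with the two constants $\bn\on$ and $\bn\ze$; the second says that the constant $\bn\ze$ can be recovered from the pair $\bn a,\bn\bn a$ for any single $a$. The crucial structural remark I would make at the outset is that, since $(\fA_{\tau},\bn)$ is a relative subalgebra of the $\tau\bn$-expansion $(\fB_{\tau},\bn)$, the underlying $\fA_{\tau}$ already contains the interpretation of the nullary operation $\tau$, and by the defining identity $\bn\on=\tau$ of a $\tau\bn$-expansion we have $\bn\on\in|\fA|$. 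Thus the only value that $\bn$ can possibly take outside $\fA_{\tau}$ is governed by whether $\bn\ze$ belongs to $|\fA|$.

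For (c)$\Ra$(a), I would assume $\bn\ze\in|\fA|$ and invoke Proposition~\ref{P:bn-properties}.i: for every $x\in|\fA|$ the value $\bn x=(x\ra\bn\on)\we\bn\ze$ is built by the Heyting operations $\ra$ and $\we$ from $x$, from $\bn\on=\tau\in|\fA|$, and from $\bn\ze\in|\fA|$. Since $\fA_{\tau}$ is closed under the Heyting operations, $\bn x\in|\fA|$, so $\fA_{\tau}$ is closed under $\bn$.

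For (b)$\Ra$(c), given some $a\in|\fA|$ with $\bn a,\bn\bn a\in|\fA|$, Proposition~\ref{P:bn-properties}.c yields $\bn\ze=\bn a\ve\bn\bn a$; closure of $\fA_{\tau}$ under $\ve$ then gives $\bn\ze\in|\fA|$. Finally (a)$\Ra$(b) is immediate: closure under $\bn$ applied to any element of the nonempty algebra, say to $\ze\in|\fA|$, delivers both $\bn\ze$ and $\bn\bn\ze$ in $|\fA|$, which witnesses (b).

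I do not expect a genuine obstacle here; the work is entirely in recognizing which of the previously proved tilde-negation identities to deploy. The only point that genuinely requires care is the observation that $\bn\on=\tau$ lies in $\fA_{\tau}$ automatically, since this is exactly what makes the term-definition in part (i) usable inside $\fA_{\tau}$ once $\bn\ze$ is present. Were the constant $\tau$ not part of the signature, the implication (c)$\Ra$(a) would break down, so I would be careful to flag the use of the defining identity $\bn\on=\tau$ at that step.
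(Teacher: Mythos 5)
Your proof is correct and follows essentially the same route as the paper's: (b)$\Ra$(c) via Proposition~\ref{P:bn-properties}.c, and (c)$\Ra$(a) via the term-definition in Proposition~\ref{P:bn-properties}.i together with the fact that $\bn\on=\tau$ already lies in $\fA_{\tau}$. Your explicit flagging of the latter point, which the paper compresses into the phrase ``by premise,'' is a welcome clarification rather than a deviation.
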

\begin{proof}
	The implication $(\text{a})\Ra(\text{b})$ is obvious.
	
	Then, $(\text{b})\Ra(\text{c})$ follows straightforward from
	Proposition~\ref{P:bn-properties}.c.
	
	Now we prove $(\text{c})\Ra (\text{a})$. Since, by premise, $\bn\on\in\fA_{\tau}$, we use Proposition~\ref{P:bn-properties}.i. 
\end{proof}
\begin{prop}
	Let $\fA_{\tau}\preccurlyeq\fB_{\tau}$ and let $(\fA_{\tau},\bn_{1})$ and $(\fB_{\tau},\bn_{2})$ be $\tau\bn$-expansions. Then the following properties are equivalent:
	\[
	\begin{array}{cl}
	(\emph{a}) &(\fA_{\tau},\bn_{1})\preccurlyeq(\fB_{\tau},\bn_{2});\\
	(\emph{b}) &\bn_{1}\ze=\bn_{2}\ze;\\
	(\emph{c}) &\bn_{1}\tau=\bn_{2}\tau.
	\end{array}
	\]
\end{prop}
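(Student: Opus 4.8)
The plan is to use that a $\bn$-negation is completely determined by its values at $\on$ and $\ze$ through the identity $\bn x=(x\ra\bn\on)\we\bn\ze$ of Proposition~\ref{P:bn-properties}.i. First I would record the standing facts forced by $\fA_{\tau}\preccurlyeq\fB_{\tau}$: this inclusion preserves the Heyting operations and the nullary $\tau$, so $\tau$ is interpreted by one and the same element, say $a$, in $\fA$ and in $\fB$; and since a $\tau\bn$-expansion satisfies $\bn\on=\tau$ (Definition~\ref{D:tau-tilde-expasion}), we get $\bn_{1}\on=\bn_{2}\on=a$ for free.

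For $(\text{b})\Leftrightarrow(\text{c})$ I would note that, because $\tau$ is interpreted as $\bn_{i}\on$, Proposition~\ref{P:bn-properties}.e yields $\bn_{i}\tau=\bn_{i}\bn_{i}\on=\bn_{i}\ze$ for $i=1,2$. Thus $\bn_{1}\tau=\bn_{2}\tau$ and $\bn_{1}\ze=\bn_{2}\ze$ are literally the same assertion, and the two conditions are equivalent.

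The implication $(\text{a})\Rightarrow(\text{b})$ is immediate: if $(\fA_{\tau},\bn_{1})\preccurlyeq(\fB_{\tau},\bn_{2})$, the inclusion respects $\bn$, so $\bn_{1}x=\bn_{2}x$ for every $x\in|\fA|$, and taking $x=\ze$ gives (b). The substantive direction is $(\text{b})\Rightarrow(\text{a})$. Here I would apply Proposition~\ref{P:bn-properties}.i to write, for each $x\in|\fA|$, both $\bn_{1}x=(x\ra a)\we\bn_{1}\ze$ (computed in $\fA$) and $\bn_{2}x=(x\ra a)\we\bn_{2}\ze$ (computed in $\fB$). Since $\fA$ is a subalgebra of $\fB$, the operations $\ra$ and $\we$ agree on elements of $|\fA|$, the constant $a$ is shared, and (b) supplies $\bn_{1}\ze=\bn_{2}\ze$; hence $\bn_{1}x=\bn_{2}x$ for all $x\in|\fA|$. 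This says exactly that the given $\tau$-expansion embedding also preserves $\bn$, that is, $(\fA_{\tau},\bn_{1})\preccurlyeq(\fB_{\tau},\bn_{2})$.

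I do not expect a real obstacle here: the whole argument is a direct rewriting using the two representation identities, Proposition~\ref{P:bn-properties}.e and~\ref{P:bn-properties}.i. The only point needing care is the bookkeeping that the Heyting operations and the constant $a=\tau=\bn\on$ are genuinely shared between $\fA$ and $\fB$ by virtue of $\fA_{\tau}\preccurlyeq\fB_{\tau}$; once that is made explicit, each equivalence collapses to expressing $\bn$ in terms of its values at $\on$ and $\ze$.
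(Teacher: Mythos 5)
Your proposal is correct and follows essentially the same route as the paper: both use Proposition~\ref{P:bn-properties}.e to identify $\bn_{i}\tau=\bn_{i}\bn_{i}\on=\bn_{i}\ze$ (giving (b)$\Leftrightarrow$(c)) and the representation identity $\bn x=(x\ra\bn\on)\we\bn\ze$ of Proposition~\ref{P:bn-properties}.i to recover (a) from (b). Your write-up merely makes explicit what the paper's closing phrase ``we apply Proposition~\ref{P:bn-properties}'' leaves implicit, namely that the shared Heyting operations, the shared constant $\tau=\bn_{i}\on$, and the equality $\bn_{1}\ze=\bn_{2}\ze$ force $\bn_{1}$ and $\bn_{2}$ to agree on $|\fA|$.
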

\begin{proof}
	The conditional $(\text{a})\Rightarrow(\text{b})$ is obvious. Next, assume that (b) is true. Then, by virtue of Proposition~\ref{P:bn-properties}.e,
	\[
	\bn_{1}\tau=\bn_{1}\bn_{1}\on=\bn_{1}\ze
	=\bn_{2}\ze=\bn_{2}\bn_{2}\on=\bn_{2}\tau;
	\]
	thus (c) is true. Finally, suppose that
	$\bn_{1}\tau=\bn_{2}\tau$. The latter,as above, implies that 
	\[
	\bn_{1}\ze=\bn_{1}\bn_{1}\on=\bn_{1}\tau=\bn_{2}\tau=\bn_{2}\bn_{2}\on=\bn_{2}\ze.
	\]
	Then, we apply Proposition~\ref{P:bn-properties}
\end{proof}

 Let $\fA$ be a Heyting algebra and $a\in|\fA|$. Interpreting a constant $\tau$ as $a$, we get a $\tau$-expansion $\fA_{\tau_{a}}$. Then, we obtain algebra $\de{[\fA_{\tau_{a}}]}$. It is clear that in the latter algebra $(h(\tau_{a}),\de{h(\tau_{a})})$ is an $\cE$-pair (Proposition~\ref{P:delta-h(x)-2}). Thus, by adding of the $\bn$-negation  to $\de{[\fA_{\tau_{a}}]}$, corresponding to this $\cE$-pair, in the way provisioned in Proposition~\ref{P:bn-negation-1}, we obtain a $\tau\bn$-expansion. Moreover, by virtue of Proposition~\ref{P:generating}, $(\fA_{\tau_{a}},\bn)\pack(\de{[\fA_{\tau_{a}}]},\bn)$. We state this conclusion by the following proposition.
\begin{prop}\label{P:packing}
Given a Heyting algebra $\fA$ and $a\in|\fA|$, 	
$(\fA_{\tau_{a}},\bn)\pack(\de{[\fA_{\tau_{a}}]},\bn)$; in other words, $\fA_{\tau_{a}}\pack\de{[\fA_{\tau_{a}}]}$ (in the sense of Definition~\ref{D:packing}).
\end{prop}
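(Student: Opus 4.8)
The plan is to unfold Definition~\ref{D:packing} and check its two ingredients separately: that $\fA_{\tau_{a}}$ embeds as a $\tau$-expansion into $\de{[\fA_{\tau_{a}}]}$, and that the $\tau\bn$-expansion $(\de{[\fA_{\tau_{a}}]},\bn)$ is generated by (the image of) $|\fA|$. Throughout I keep track of the interpretations of $\tau$: it is $a$ in $\fA_{\tau_{a}}$ and $\hA{a}$ in $\de{[\fA_{\tau_{a}}]}$, and the Stone map $h_{\mathfrak{A}}$ sends $a$ to $\hA{a}$. Since $h_{\mathfrak{A}}(\fA)\sbe\de{[\fA_{a}]}$ by the very definition of $\de{[\fA_{a}]}$, the map $h_{\mathfrak{A}}$ restricts to a Heyting embedding of $\fA$ into $\de{[\fA_{a}]}$ that carries $\tau$ to $\tau$; this settles the first ingredient, $\fA_{\tau_{a}}\suba\de{[\fA_{\tau_{a}}]}$.

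Next I would produce the $\bn$-negation. The key point is that $\varepsilon=(\hA{a},\de{\hA{a}})$ is an $\cE$-pair \emph{in the subalgebra} $\de{[\fA_{a}]}$. By Proposition~\ref{P:delta-h(x)-2}, $\hA{a}$ is enriched in $\HA{\SA}$ with $\de{\hA{a}}$, and both elements lie in $\de{[\fA_{a}]}$. Conditions (a) and (b) of Definition~\ref{D:enrichable} are equalities/inequalities between these two fixed elements, so they survive restriction to any subalgebra containing them; condition (c) is a statement ``for all $x$'' that holds for every $x\in\HA{\SA}$, hence a fortiori for every $x\in\de{[\fA_{a}]}$. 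Thus $\varepsilon$ is an $\cE$-pair in $\de{[\fA_{a}]}$, and Proposition~\ref{P:bn-negation-1} supplies the $\bn$-negation $\bn x=(x\ra\hA{a})\we\de{\hA{a}}$ with $\bn\on=\hA{a}$ and $\bn\ze=\de{\hA{a}}$. Since $\tau$ is interpreted by $\hA{a}$, the identity $\bn\on=\tau$ holds, so $(\de{[\fA_{\tau_{a}}]},\bn)$ is indeed a $\tau\bn$-expansion.

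The second ingredient I would get from the converse half of Proposition~\ref{P:generating}. On one hand, $h_{\mathfrak{A}}(\fA)$ (the isomorphic copy of $|\fA|$) is closed under the total operations $\we,\ve,\ra,\neg$ and contains $\tau=\hA{a}$, so $(\fA_{\tau_{a}},\bn)$, viewed as a partial algebra with respect to $\bn$, is a relative subalgebra of $(\de{[\fA_{\tau_{a}}]},\bn)$. On the other hand, $\de{[\fA_{a}]}$ is by construction generated as a Heyting algebra by $h_{\mathfrak{A}}(\fA)\cup\{\de{\hA{a}}\}$, which is exactly being generated as a $\tau$-expansion by $|\fA|\cup\{\bn\ze\}$ (because $\tau=\hA{a}$ already lies in $h_{\mathfrak{A}}(\fA)$ and $\bn\ze=\de{\hA{a}}$). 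Feeding these two facts into the converse direction of Proposition~\ref{P:generating} yields $(\fA_{\tau_{a}},\bn)\pack(\de{[\fA_{\tau_{a}}]},\bn)$, which by Definition~\ref{D:packing} is precisely $\fA_{\tau_{a}}\pack\de{[\fA_{\tau_{a}}]}$.

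The only genuinely delicate step, and the one I would write out most carefully, is the descent of the enrichment from the full algebra $\HA{\SA}$ to the generated subalgebra $\de{[\fA_{a}]}$: namely that the universally quantified condition (c) of Definition~\ref{D:enrichable} is not lost when the ambient universe shrinks (it becomes easier, not harder), and that the operations $\ve,\ra$ agree on the subalgebra. Everything else is bookkeeping, matching the interpretation of $\tau$ with $\bn\on=\hA{a}$ and relying on Proposition~\ref{P:bn-properties}.i (through Proposition~\ref{P:generating}) to confirm that replacing the partial operation $\bn$ by the single adjoined element $\bn\ze$ loses no generating power.
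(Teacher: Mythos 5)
Your proof is correct and follows essentially the same route as the paper's: the paper likewise obtains the $\cE$-pair $(h(\tau_{a}),\de{h(\tau_{a})})$ in $\de{[\fA_{\tau_{a}}]}$ from Proposition~\ref{P:delta-h(x)-2}, equips it with the $\bn$-negation via Proposition~\ref{P:bn-negation-1}, and concludes the packing by Proposition~\ref{P:generating}. The only difference is that you make explicit the descent of the enrichment from $\HA{\SA}$ to the generated subalgebra $\de{[\fA_{a}]}$ (the universally quantified condition only gets easier in a subalgebra), a step the paper passes over with ``it is clear.''
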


\section{One-element enrichment from a proof-theoretic viewpoint}\label{S:localization-proof-theoretic-0}
In this section we prove an analog of Kuznetsov's Theorem (Proposition~\ref{P:equipollence}), where in place of $\square$ the connective $\bn$ is used.\footnote{The proof of Proposition~\ref{P:equipollence} is a modification of our proof of Kuznetsov's Theorem in~\cite{mur15b}.} We need it to derive an analog of Kuznetsov's Corollary 2 mentioned on p.~\pageref{Kuznetsov's-Corollary-2}, which is obtained as Corollary~\ref{C:L(A_tau)=L(B_tau)}.

\subsection{The $\Lantau$-equipollence of two calculi}
\label{S:localization-proof-theoretic}
In this subsection we discus logical systems formulated in languages $\Lantau$ and $\Lantaut$. These languages are extensions of the language $\Lana$ introduced in Section~\ref{S:syntactic}. We obtain $\Lantau$ by adding a nullary connective $\tau$. Then, $\Lantaut$ is the extension of $\Lantau$ by enriching the latter with another unary connective $\sim$. Unspecified formulas of $\Lantau$ will be denoted by symbols $A^{\ast}, B^{\ast},\ldots$ (with or without subscripts) and those of $\Lantaut$ by letters $\alpha,\beta, \gamma$, and $\lambda$ (also with or without subscripts). The formulas of $\Lantaut$ of the form $\sim\!\alpha$ are called $\sim$-\textbf{\textit{formulas}}. We refer to those $\Lantaut$-formulas which do not contain $\sim$ (i.e. are $\Lantau$-formulas)
as $\sim$-\textbf{\textit{free}}. The \textbf{\textit{degree}} of an $\Lantaut$-formula is the number of occurrences of the connective $\sim$ in the formula. Thus all $\Lantau$-formulas have the degree 0. Also, we will be using the following notation:
\[
\true := p\ra p.
\]

Calculus $\Intau$ is defined in the language $\Lantau$, while the calculi $\Intaut$ and $\KMt$ are defined in the language $\Lantaut$. The calculus $\Intau$ is $\Int$ in the language $\Lantau$. The calculus $\Intaut$  is defined by the axioms of $\Int$ in the language $\Lantaut$. The calculus $\KMt$ is $\Intaut$ plus the following formulas:
\[
\begin{array}{rl}
(\text{a}) &\sim p\leftrightarrow(p\rightarrow\tau)\wedge\sim\tau,\\
(\text{b}) & (\sim\tau\rightarrow\tau)\rightarrow\tau,\\
(\text{c}) &\sim\tau\rightarrow(p\vee(p\rightarrow\tau)),\\
(\text{d}) &\tau\rightarrow\sim\tau.
\end{array}
\]
The postulated inference rules of all calculi under consideration are (uniform) substitution and modus ponens.\footnote{The notation, $\KMt$, is justified by the formulas~\eqref{E:KM-axioms}, Proposition~\ref{P:bn-negation-1} and the property~Proposition~\ref{P:bn-properties}.e.}  

Below we will deal with several types of derivation. We distinguish these types as follows.
\begin{itemize}
	\item $\Intau +A^{\ast}\vdash B^{\ast}$ means that there is a derivation in $\Lantau$ of a formula $B^{\ast}$ from axioms of $\Int$ and a formula $A^{\ast}$ as a premise by using substitution of $\Lantau$-formulas and modus ponens..
	\item $\Intaut +\alpha\vdash\beta$ denotes the fact that there is a derivation in $\Lantaut$ of a formula $\beta$ from axioms of $\Int$ and a formula $\alpha$ by using substitution of $\Lantaut$-formulas and modus ponens.
	\item $\KMt+\alpha \vdash\beta$ is to denote that there is a derivation in $\Lantaut$ of $\beta$ from axioms of $\Int$, formulas of the list $\text{(a)}-\text{(d)}$, and $\alpha$.
\end{itemize}
If a derivation $\cD$ supports, say, the claim $\KMt+\alpha \vdash\beta$, we will write
$\cD:\KMt+\alpha \vdash\beta$ and call $\cD$ a $\KMt$-\textbf{\textit{derivation}}. This notation and terminology  apply also to the types of derivation which have been introduced above, as well as to those which will be defined below.
\begin{defi}[refined derivation] 
	A derivation is called refined if all substitutions, if any, apply only to the axioms occurring in the derivation or to a premise.
\end{defi}

It is a well-known fact that if the only postulated inference rules of a calculus are substitution and modus ponens, then any derivation can be transformed to a refined derivation of the same last formula. (Cf.~\cite{sob74,lam79}.)

Suppose $S=\lbrace\alpha_{1},\ldots,\alpha_{n}\rbrace$ is a finite set of $\Lantaut$-formulas. A formula $\sim\!\!\alpha$ is called \textbf{\textit{maximal in}} $S$ if it is a subformula of at least one of the formulas $\alpha_{i}$ and it does not occur in the scope of any occurrence of the connective $\sim$ in any of the formulas $\alpha_{i}$. The set of all maximal formulas of $S$ is denoted by $M(S)$. We also apply this definition, when $S$ is a finite list of $\Lantaut$-formulas. 
\begin{defi}[pure derivation, $\Vdash$]\label{D:pure-derivation}
	A refined {\em$\KMt$}-derivation   {\em$\cD:\KMt+\alpha\vdash\beta$} is called pure if {\em$M(\cD)\subseteq M(\alpha,\beta)$}. The notation {\em$\cD: \KMt+\alpha\Vdash\beta$} reads that $\cD$ is a pure  {\em$\KMt$}-derivation. These definition  and notation apply to {\em$\Intaut$}-derivations as well. 
\end{defi}
Thus,   $\cD:\KMt + A^{\ast}\Vdash B^{\ast}$ if and only if  $\cD:\KMt + A^{\ast}\vdash B^{\ast}$ and $M(\cD)=\emptyset$. Hence, if the first statement is true, then  $\Intau +A^{\ast}\vdash B^{\ast}$. In this section we aim to prove Proposition~\ref{P:equipollence} and Corollary~\ref{C:general-equipollence}.  
We will reach this goal through the following key, though auxiliary, notion.
\begin{defi}[relation $\ll$, set $\mathcal{I}$, root $r_{0}$]\label{D:set-I}
	Let $\mathbb{N}$ be the set of nonnegative numbers. Then we arrange the pairs of
	$\mathbb{N}^{2}$ by the following relation:
	\[
	(x_{1},x_{2})\ll(y_{1},y_{2})\Longleftrightarrow  x_{1}<y_{1},~\text{or}~x_{1}=y_{1}~\text{and}~x_{2}\le y_{2}.
	\]
	We denote $\mathcal{I}=\langle\mathbb{N}^{2},\ll\rangle$. It is clear that $r_{0}=(0,0)$ is the least element of $\mathcal{I}$.  We call $r_{0}$ the root of $\mathcal{I}$.
\end{defi}

A routine check shows  that $\mathcal{I}$ is a poset. 
\begin{prop}
The poset $\mathcal{I}$ satisfies the descending chain condition~{\em\cite{gra78}}.
\end{prop}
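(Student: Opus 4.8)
The plan is to prove that $\mathcal{I}=\langle\mathbb{N}^{2},\ll\rangle$ satisfies the descending chain condition, i.e. that there is no infinite strictly descending sequence $(x_1^{(0)},x_2^{(0)})\gg(x_1^{(1)},x_2^{(1)})\gg\cdots$ in $\mathcal{I}$. First I would unwind the definition of $\ll$: from Definition~\ref{D:set-I}, $(x_1,x_2)\ll(y_1,y_2)$ holds exactly when $x_1<y_1$, or else $x_1=y_1$ and $x_2\le y_2$. Thus a strict descent $(y_1,y_2)\gg(x_1,x_2)$ (meaning $(x_1,x_2)\ll(y_1,y_2)$ but the two are distinct) requires either $x_1<y_1$, or $x_1=y_1$ together with $x_2<y_2$. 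In either case the first coordinate is \emph{nonincreasing} along a descending chain, and whenever the first coordinate stays fixed the second coordinate must \emph{strictly decrease}.

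The key step is then a standard argument combining the two natural well-orders on $\mathbb{N}$. Suppose for contradiction that $\{(x_1^{(n)},x_2^{(n)})\}_{n<\omega}$ is an infinite strictly descending chain. The sequence $x_1^{(0)}\ge x_1^{(1)}\ge x_1^{(2)}\ge\cdots$ of first coordinates is a nonincreasing sequence of nonnegative integers, so by well-foundedness of $(\mathbb{N},\le)$ it must stabilize: there is some $N$ such that $x_1^{(n)}=x_1^{(N)}$ for all $n\ge N$. By the analysis above, from index $N$ onward every descent step keeps the first coordinate fixed and hence strictly decreases the second coordinate, giving an infinite strictly decreasing sequence $x_2^{(N)}>x_2^{(N+1)}>\cdots$ of nonnegative integers. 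This contradicts the descending chain condition for $(\mathbb{N},\le)$.

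I expect the only real content to be observing that a strict descent in $\ll$ forces the dichotomy ``$x_1$ strictly drops, or $x_1$ is fixed and $x_2$ strictly drops,'' after which the result is the well-known fact that the lexicographic-type order built from two copies of $(\mathbb{N},\le)$ is well-founded. The main (and minor) obstacle to watch is the asymmetry in the definition of $\ll$: it uses $x_2\le y_2$ rather than $x_2<y_2$ in the second clause, so I must be careful to pass to \emph{strict} descent and confirm that strictness in $\ll$ between distinct points forces strictness of $x_2$ precisely when the first coordinates coincide. Once that is checked, the double application of well-foundedness of $\mathbb{N}$ closes the argument, so $\mathcal{I}$ indeed satisfies the descending chain condition.
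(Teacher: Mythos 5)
Your proof is correct and takes essentially the same approach as the paper: both arguments rest on the observation that the first coordinate (the paper's ``level'') is nonincreasing along a $\ll$-descending chain, and that descent within a fixed level is bounded --- the paper phrases this as there being only finitely many same-level pairs below a given pair, while you equivalently stabilize the first coordinate and then extract a strictly decreasing sequence of second coordinates. If anything, your write-up is more complete than the paper's, which merely states the two key observations without explicitly assembling the final contradiction, and you correctly handle the one subtlety (the $\le$ rather than $<$ in the second clause of the definition of $\ll$) that the paper glosses over.
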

\begin{proof}
First we notice that
\begin{equation}\label{E:<<-relation}
(x_{1},x_{2})\ll(y_{1},y_{2})\Longrightarrow x_1\le y_1.
\end{equation}	

Given a pair $s=(x_1,x_2)\in\mathcal{I}$, we call $x_1$,  the \textit{level} of $s$. It is obvious that, given a pair $s$,
there are only finitely many pairs $t$ with $t\ll s$ such that the levels of $s$ and $t$ coincide. Also, according to (\ref{E:<<-relation}), given a pair $s$, each pair $t$ with $t\ll s$ is of a level that is less than or equal to the level of $s$.
\end{proof}

\begin{defi}[down-complete chains in $\mathcal{I}$]
A descending chain in $\mathcal{I}$ is called down-complete if its least element is $r_{0}$.
\end{defi}

\begin{defi}[rank of derivation, $\vdash_{s}$]\label{D:rank}
	We say that a refined derivation $\cD$ is of rank $s\in\mathcal{I}$, where $s=(m,n)$, denoting this fact by {\em$\cD: \KMt+\alpha\vdash_{s}\beta$} $($or by {\em$\cD: \Intaut+\alpha\vdash_{s}\beta$ $)$},  if $M(\cD)\neq\emptyset$, $m$ is the highest degree among the formulas of $M(\cD)$ and $n$ is the number of the formulas of $M(\cD)$ of the degree $m$. If $M(\cD)=\emptyset$ then $s=r_{0}$.
\end{defi}
It is obvious that
\[
\KMt +A^{\ast}\vdash_{r_{0}} B^{\ast} \Longleftrightarrow \KMt +A^{\ast}\Vdash B^{\ast}.
\]

Given formulas $\alpha$, $\beta$ and $\gamma$, we denote by
\[
\alpha[\beta:\gamma]
\]
the result of replacement of all occurrences of $\beta$ in $\alpha$ with $\gamma$.
\begin{lem}\label{P:key-lemma-one}
	Let {\em  $\cD:\KMt +\alpha\vdash_{s} \beta$} be a refined derivation of rank $s\neq r_{0}$. Also,
	suppose a formula  $\sim\!\gamma\in M(\cD)$,  $\gamma\neq\tau$, $\sim\!\gamma$ is not a subformula of $\alpha$ and $\sim\!\gamma$ is of  the highest degree among the formulas of $M(\cD)$ .  Then there are a formula $\delta$ and
	a refined derivation {\em $\cD^{\ast}:\KMt+\alpha\vdash_{t} \beta[\sim\!\gamma:\delta]$} such that$:$
	\begin{itemize}
		\item if $M(\cD)=\lbrace\sim\! A^{\ast}_{1},\ldots,\sim\! A^{\ast}_{n}\rbrace$, for some formulas $A^{\ast}_{1},\ldots,A^{\ast}_{n}$, and $\gamma=A^{\ast}_{i}$,  then  $M(\cD^{\ast})=\lbrace \sim\! A^{\ast}_{1},\ldots,\sim\! A^{\ast}_{i-1},\sim\! A^{\ast}_{i+1},\sim\!\tau\rbrace$, in which case $s=t=(1,n)$$;$
		\item otherwise,   $t\ll s$ and $t\neq s$.
	\end{itemize}
\end{lem}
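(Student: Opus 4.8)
The plan is to remove the distinguished maximal formula $\bn\ga$ by substituting for it, throughout $\cD$, the formula
\[
\delta:=(\ga\ra\ta)\we\bn\ta ,
\]
which is precisely the right-hand side produced by axiom~(a) under the substitution $p:=\ga$. First I would note that $\bn\ga\leftrightarrow\delta$ is an instance of~(a), so $\delta$ is a lawful $\KMt$-surrogate for $\bn\ga$, and that the maximal $\bn$-subformulas of $\delta$ are exactly $\bn\ta$, of degree $1$, together with the maximal $\bn$-subformulas of $\ga$, each of degree at most $\deg(\bn\ga)-1=m-1$ (writing $s=(m,n)$). The essential feature is that $\delta$ carries no $\bn$-subformula of degree $m$.

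Next I would build $\cD^{\ast}$ by applying the replacement $[\bn\ga:\delta]$ to every line of $\cD$ and then repairing the few lines that cease to be instances, verifying line by line that the outcome is again a refined $\KMt$-derivation. The premise contributes $\al$, which the replacement fixes because $\bn\ga$ is not a subformula of $\al$ (any substitution instances of $\al$ permitted by refinement are handled exactly as the axiom instances below). Each modus ponens step survives, since $\bn\ga$ has principal connective $\bn\neq\ra$, so the replacement distributes through the implication of the step. The real work is with the axiom instances, and here I would use the hypothesis $\bn\ga\in M(\cD)$ decisively: since no occurrence of $\bn\ga$ in $\cD$ lies in the scope of a $\bn$, in any instance $\bn\ga$ can be neither buried inside the argument $\phi$ of a subformula $\bn\phi$, nor equal to a skeleton occurrence $\bn\ta$ of axioms (a)--(d) (this is where $\ga\neq\ta$ enters), nor nested inside another $\bn$. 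Consequently, in every $\Int$-instance, in every instance of (b)--(d), and in every instance of (a) with $\phi\neq\ga$, the formula $\bn\ga$ occurs only within the formulas substituted for variables (if at all), so the replaced line is an instance of the very same schema. The lone exception is an instance of~(a) with $\phi=\ga$, namely $\bn\ga\leftrightarrow\delta$, which becomes $\delta\leftrightarrow\delta$; there I would splice in a fixed refined $\Int$-derivation of $\delta\leftrightarrow\delta$. The last line becomes $\be[\bn\ga:\delta]$, as required.

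Finally I would read off $M(\cD^{\ast})$ and the rank $t$. Every occurrence of $\bn\ga$ has become $\delta$, whereas each other member $\bn\eta\in M(\cD)$ is left untouched, since maximality of $\bn\ga$ prevents $\bn\ga$ from occurring inside $\eta$. The only newly exposed maximal $\bn$-formulas come from the inserted copies of $\delta$, namely $\bn\ta$ and the maximal $\bn$-subformulas of $\ga$; hence
\[
M(\cD^{\ast})=\big(M(\cD)\setminus\{\bn\ga\}\big)\cup\{\bn\ta\}\cup M(\ga),
\]
where every formula of $M(\ga)$ has degree at most $m-1$. Since $\bn\ga$ had the top degree $m$ and nothing of degree $m$ is added, the count of degree-$m$ maximal formulas drops by one: if $n\ge 2$ the rank becomes $(m,n-1)$, and if $n=1$ with $m\ge 2$ the top degree itself strictly decreases, so in both situations $t\ll s$ and $t\neq s$. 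The single stationary configuration is $m=1$, where $M(\cD)=\{\bn A^{\ast}_{1},\dots,\bn A^{\ast}_{n}\}$ consists wholly of degree-$1$ formulas, $\ga=A^{\ast}_{i}$, and the loss of $\bn A^{\ast}_{i}$ is exactly offset by the fresh $\bn\ta$ (when $\bn\ta$ was not already present); then $M(\cD^{\ast})$ again has $n$ formulas of degree $1$ and $t=s=(1,n)$, which is the first alternative.

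The main obstacle I foresee is the case analysis of the axiom instances: I must make precise that maximality of $\bn\ga$ together with $\ga\neq\ta$ forces each occurrence of $\bn\ga$ to be either an entire principal instance of~(a) or embedded inside a variable-substitution, so that the global replacement sends instances to instances and the only repair needed is the trivial $\delta\leftrightarrow\delta$. A secondary delicate point is the rank bookkeeping, especially checking that no maximal $\bn$-formula of degree $m$ is created --- this is exactly why $\deg(\bn\ta)=1$ and why the $\bn$-subformulas of $\ga$ have degree below $m$ both matter --- and isolating the unique stationary configuration that yields the first bullet.
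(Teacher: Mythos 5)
Your proposal is correct and follows essentially the same route as the paper's proof: the same surrogate $\delta=(\ga\ra\ta)\we\bn\ta$, the same global replacement $[\bn\ga:\delta]$ with the single repair of the degenerate axiom-(a) instance by a spliced pure derivation of $\delta\leftrightarrow\delta$, and the same rank bookkeeping isolating $m=1$ as the stationary case. If anything, you are slightly more explicit than the paper --- e.g., in writing out $M(\cD^{\ast})=\bigl(M(\cD)\setminus\{\bn\ga\}\bigr)\cup\{\bn\ta\}\cup M(\ga)$, in covering instances of axioms (b) and (d), and in flagging the caveat when $\bn\ta$ already belongs to $M(\cD)$ --- while sharing the paper's own terse treatment of substitution instances of the premise $\al$.
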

\begin{proof}
	Let us denote by 
	\[
	\cD: \gamma_{1},\ldots,\gamma_{n}
	\]
	the given derivation. Obviously, $\gamma_{n}=\beta$. Then, we define:
	\[
	\delta=(\gamma\rightarrow\tau)\wedge\sim\!\tau. 
	\]
	We notice that $\delta$ does not contain $\sim\!\gamma$. Further, we define:
	\[
	\gamma_{i}^{\ast}=\gamma_{i}[\sim\!\gamma:\delta].
	\]
	
	We note that $\gamma_{i}^{\ast}$ does not contain $\sim\!\gamma$. If $\gamma\neq A^{\ast}$, for some $A^{\ast}$, then the degree of $\gamma_{i}^{\ast}$ is less than that of $\gamma_{i}$ but greater than or equal to 1; otherwise, the degree of $\gamma_{i}^{\ast}$ equals that of $\gamma_{i}$ and both are equal to $1$. Now we have to consider in more detail what happens in conversion of $\gamma_{i}$ to $\gamma_{i}^{\ast}$. For this we examine the following cases.
	
	Case 1: $\gamma_{i}$ is an instance of an $\Int$-axiom. Then $\gamma_{i}^{\ast}$ is also an instance of the same $\Int$-axiom. 
	
	Case 2: $\gamma_{i}$ is an instance of the axiom (a), that is a formula $\sim\!\lambda\leftrightarrow(\lambda\rightarrow\tau)\wedge\sim\!\tau$. In this case, assume that $\lambda\neq\gamma$.  Then $\gamma_{i}$ does not contain $\sim\!\gamma$ at all and hence $\gamma_{i}^{\ast}=\gamma_{i}$.
	
	Case 3: $\gamma_{i}=\sim\!\gamma\leftrightarrow(\gamma\rightarrow\tau)\wedge\sim\!\tau$. Then $\gamma_{i}^{\ast}=\delta\leftrightarrow\delta$. It is obvious that $\Intaut\Vdash\gamma_{i}^{\ast}$. Let us denote a derivation that supports the last claim by $\cD_{1}$.
	
	Case 4: $\gamma_{i}=\sim\!\tau\rightarrow(\lambda\vee(\lambda\rightarrow\tau))$. Then
	$\gamma_{i}^{\ast}=\sim\!\tau\rightarrow(\lambda[\sim\!\gamma:\delta]\vee(\lambda[\sim\!\gamma:\delta]\rightarrow\tau))$, that is, $\gamma_{i}^{\ast}$ is an instance of the axiom (c) and does not contain $\sim\!\gamma$.
	
	Case 5: $\gamma_{i}$ is an instance of $\alpha$. Then, since $\sim\!\gamma$ is not a subformula of $\alpha$  and $\sim\!\gamma$ is maximal in $\cD$, $\gamma_{i}^{\ast}$ remains to be an instance of $\alpha$.
	
	Case 6: $\gamma_{i}$ is obtained by modus ponens from $\gamma_{k}$ and $\gamma_{l}=\gamma_{k}\rightarrow\gamma_{i}$ for some $k,l<i$. Then $\gamma_{i}^{\ast}$ can be derived from  $\gamma_{k}^{\ast}$ and 
	$\gamma_{l}^{\ast}=\gamma_{k}^{\ast}\rightarrow\gamma_{i}^{\ast}$.

	Further, we define
	\[
	[\gamma_{i}^{\ast}]=
	\begin{cases}
	\begin{array}{cl}
	\gamma_{i}^{\ast} &\text{if $\gamma_{i}^{\ast}$ is obtained in one of the cases 1,2, 4, 5 or 6}\\
	\cD_{1} &\text{if $\gamma_{i}^{\ast}$ is obtained according to case 3.}
	\end{array}
	\end{cases}
	\]
	
	Now we denote:
	\[
	\cD^{\ast}: [\gamma_{1}^{\ast}],\ldots,[\gamma_{n}^{\ast}]. 
	\]
	
	It is clear that $\cD^{\ast}$ is a refined derivation which supports $\KMt+\alpha\vdash\beta[\sim\!\gamma:\delta]$.  Assume that $s=(m,n)$. In the case of the first alternative in the conclusion of the lemma, that is when $s=(1,n)$,  $\delta$ contains only one $\sim$-formula -- $\sim\!\tau$. Regardless of whether $M(\cD)$ contains $\sim\!\tau$ or not, $M(\cD^{\ast})$ will have it. Thus the conclusion of the first alternative is true. Otherwise, $m>1$ and, then, either $t=(m,n-1)$ or $t=(m_{1},n_{1})$ with $1\le m_{1}<m$ and some $n_{1}\ge 1$. We observe that in both cases $t\ll s$ and $t\neq s$.
\end{proof}

\begin{lem}\label{P:key-lemma-two}
	Let {\em  $\cD:\KMt +\alpha\vdash\beta$} be a refined derivation such that $M(\cD)=\lbrace\sim\!\tau\rbrace$. Also, assume that $\sim\!\tau$ is not a subformula of $\alpha$. Then there is a $\sim$-free formula $A^{\ast}$ such that {\em $\KMt+\alpha\Vdash\beta[\sim\!\tau:A^{\ast}]$}.
\end{lem}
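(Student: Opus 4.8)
The plan is to eliminate $\sim\!\tau$ from $\cD$ by substituting for it a suitable $\sim$-free formula $A^{\ast}$, thereby turning $\cD$ into a pure derivation. First I would record the structural consequences of the hypotheses. Since $M(\cD)=\{\sim\!\tau\}$, every $\sim$-subformula occurring anywhere in $\cD$ is an occurrence of $\sim\!\tau$: a non-maximal $\sim$-subformula would sit inside a maximal $\sim$-formula, which by hypothesis can only be $\sim\!\tau$, and $\tau$ is $\sim$-free. In particular, because $\sim\!\tau$ is not a subformula of $\alpha$, the premise $\alpha$ is $\sim$-free. The same observation shows that the only instances of axiom (a) that may occur are those with $\tau$ substituted for $p$, that is $\sim\!\tau\leftrightarrow(\tau\ra\tau)\we\sim\!\tau$.

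Next I would pass to a fresh variable. Replacing $\sim\!\tau$ throughout $\cD$ by a propositional variable $r$ not occurring in $\cD$ converts $\cD$ into an $\Intau$-derivation of $\beta[\sim\!\tau:r]$ from $\alpha$ and the finitely many formulas coming from the used instances of (a)--(d): the (a)-instances become $\Intau$-theorems (as $\tau\ra\tau$ is $\true$), while (b),(c),(d) contribute $(r\ra\tau)\ra\tau$, the formulas $r\ra(\mu_j\ve(\mu_j\ra\tau))$ with $\mu_j=\lambda_j[\sim\!\tau:r]$ ranging over the arguments of the (c)-instances, and $\tau\ra r$. Writing $\Sigma[r]$ for this finite set and using the deduction theorem for $\Intau$, I get $\Intau\vdash\alpha\ra\big((\WE\Sigma[r])\ra\beta[\sim\!\tau:r]\big)$. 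As this is a theorem and $r$ a variable, I may substitute any $\sim$-free formula $A^{\ast}$ for $r$; since $\alpha$ is $r$-free, it then suffices to exhibit a $\sim$-free $A^{\ast}$ with $\Intau+\alpha\vdash\Sigma[r:A^{\ast}]$, for then modus ponens gives $\Intau+\alpha\vdash\beta[\sim\!\tau:A^{\ast}]$, i.e. the desired pure $\KMt$-derivation.

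Thus the task reduces to producing a $\sim$-free ``partial enrichment of $\tau$'': a $\sim$-free $A^{\ast}$ for which $\tau\ra A^{\ast}$ and $(A^{\ast}\ra\tau)\ra\tau$ are theorems and, for each $j$, so is $A^{\ast}\ra(\nu_j\ve(\nu_j\ra\tau))$ with $\nu_j=\mu_j[r:A^{\ast}]$. When no $\mu_j$ contains $r$ (equivalently no $\lambda_j$ contains $\sim\!\tau$) this is immediate with $A^{\ast}=\WE_j(\lambda_j\ve(\lambda_j\ra\tau))$: condition (d) holds since $\tau\ra(\lambda_j\ra\tau)$ is provable; condition (c) holds by $\we$-elimination, as here $\nu_j=\lambda_j$ is a conjunct; and condition (b) reduces to $\big(\WE_{i\le k}c_i\big)\ra\tau\le\tau$ for $c_i=\lambda_i\ve(\lambda_i\ra\tau)$, which follows by induction on $k$ from $c_k\ra\tau\le\tau$ together with $\big(\WE_{i\le k}c_i\big)\ra\tau=\big(\WE_{i<k}c_i\big)\ra(c_k\ra\tau)\le\big(\WE_{i<k}c_i\big)\ra\tau$.

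The hard part will be precisely the case the hypotheses do not rule out: an argument $\lambda_j$ of a (c)-instance may itself contain $\sim\!\tau$, for such occurrences are maximal and hence legitimately belong to $M(\cD)=\{\sim\!\tau\}$. Then the conjunct $\lambda_j\ve(\lambda_j\ra\tau)$ is no longer $\sim$-free, and condition (c) becomes self-referential, since $\nu_j=\mu_j[r:A^{\ast}]$ now depends on $A^{\ast}$. I expect to resolve this by an induction on the total number of occurrences of $\sim\!\tau$ lying inside the (c)-arguments, peeling off one such occurrence at a time in the spirit of Lemma~\ref{P:key-lemma-one} and re-entering the reduction above with a strictly smaller parameter. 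The guiding fact is that the genuine enrichment of $\tau$ satisfies all three conditions simultaneously for \emph{every} value put for $r$, so a finite formula realizing it on the finitely many relevant instances must exist; verifying that the peeling terminates and that each substituted (c)-instance stays $\Intau$-derivable is the technical heart of the argument.
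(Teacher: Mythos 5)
You set up the elimination correctly: passing to a fresh variable $r$, invoking the deduction theorem, and reducing the lemma to finding a $\sim$-free $A^{\ast}$ satisfying the three schemes $(A^{\ast}\ra\tau)\ra\tau$, $A^{\ast}\ra(\nu_j\ve(\nu_j\ra\tau))$ and $\tau\ra A^{\ast}$; and in the subcase where no $\lambda_j$ contains $\sim\!\tau$ your witness $A^{\ast}=\bigwedge_j(\lambda_j\ve(\lambda_j\ra\tau))$ is exactly the paper's, with the same verifications. But the remaining case --- which you yourself flag as the ``technical heart'' --- is a genuine gap, and the peeling induction you sketch does not obviously exist: Lemma~\ref{P:key-lemma-one} only eliminates maximal formulas $\sim\!\gamma$ with $\gamma\neq\tau$, so it gives you nothing to peel when the offending occurrences inside the $\lambda_j$ are themselves $\sim\!\tau$; and there is no visible decreasing measure, since substituting a candidate $A^{\ast}$ for $r$ in $\mu_j$ re-imports whatever structure $A^{\ast}$ carries. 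The closing appeal to the ``genuine enrichment'' of $\tau$ is a semantic plausibility argument, not a derivation, and cannot by itself produce the finite $\sim$-free realizer.

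The paper dissolves the self-reference with a single move you are missing: define $A^{\ast}$ by substituting $\true$ for $\sim\!\tau$ \emph{inside} the arguments of the (c)-instances first, that is, $A^{\ast}=\bigwedge_{1\le j\le k}(B^{\ast}_{j}\ve(B^{\ast}_{j}\ra\tau))$ where $B^{\ast}_{j}=\lambda_j[\sim\!\tau:\true]$ (these are $\sim$-free because, as you observed, every $\sim$-subformula occurring in $\cD$ is $\sim\!\tau$). The point is that the self-referential scheme has $A^{\ast}$ as its \emph{antecedent}: under the hypothesis $A^{\ast}$ one has $A^{\ast}\leftrightarrow\true$ derivable in $\Intau$, hence by replacement of provable equivalents (the paper cites Kleene, {\S}26) $\lambda_j[\sim\!\tau:A^{\ast}]$ is interderivable with $B^{\ast}_{j}$ under that hypothesis, while $A^{\ast}\ra(B^{\ast}_{j}\ve(B^{\ast}_{j}\ra\tau))$ holds outright by $\we$-elimination since $B^{\ast}_{j}\ve(B^{\ast}_{j}\ra\tau)$ is a conjunct of $A^{\ast}$. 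Your verifications of the other two schemes then go through verbatim for this $A^{\ast}$: your induction showing $\bigl(\bigwedge_j(B^{\ast}_{j}\ve(B^{\ast}_{j}\ra\tau))\ra\tau\bigr)\ra\tau$ is exactly the paper's Case~3 computation, and $\tau\ra A^{\ast}$ is immediate from $\tau\le B^{\ast}_{j}\ra\tau$. So no induction on occurrences and no peeling is needed; one uniform choice of $A^{\ast}$, with $\true$ plugged into the conjuncts, closes precisely the case you left open.
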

\begin{proof}
	Assume that the formulas
	\begin{equation}\label{E:c-axiom-instances}
	\sim\!\tau\rightarrow(\lambda_{1}\vee(\lambda_{1}\rightarrow\tau)),\ldots,
	\sim\!\tau\rightarrow(\lambda_{k}\vee(\lambda_{k}\rightarrow\tau))
	\end{equation}
	are all the instances of the axiom (c)  in the refined derivation $\cD$. Then, we define:
	\[
	A^{\ast}=\begin{cases}
	\begin{array}{cl}
	\bigwedge_{1\le j\le k}(\lambda_{j}\vee(\lambda_{j}\rightarrow\tau))[\sim\!\tau:\true]
	&\text{if the list (\ref{E:c-axiom-instances}) is not empty}\\
	\true &\text{if the list (\ref{E:c-axiom-instances}) is empty}.
	\end{array}
	\end{cases}
	\]
	Thus, if the list (\ref{E:c-axiom-instances})  is nonempty, then we denote:
	\[
	A^{\ast}=\bigwedge_{1\le j\le k}(B^{\ast}_{j}\vee(B^{\ast}_{j}\rightarrow\tau)),
	\]
	for some $\sim$-free formulas $B^{\ast}_{1},\ldots, B^{\ast}_{k}$.  
	
	Further, we denote:
	\[
	\gamma_{i}^{\ast}=\gamma_{i}[\sim\!\tau:A^{\ast}].
	\]
	Now we consider the following cases.
	
	Case 1: $\gamma_{i}$ is an instance of an $\Int$-axiom. Then $\gamma_{i}^{\ast}$ is also an instance of the same $\Int$-axiom.
	
	Case 2: $\gamma_{i}=\sim\!\tau\leftrightarrow(\tau\rightarrow\tau)\wedge\sim\!\tau$. Then $\gamma_{i}^{\ast}=A^{\ast}\leftrightarrow(\tau\ra\tau)\wedge A^{\ast}$. It is obvious that there is a derivation $\cD_{1}:\Intau\Vdash\delta\leftrightarrow(\tau\ra\tau)\wedge A$.
	
	Case 3:  $\gamma_{i}=(\sim\!\tau\ra\tau)\ra\tau$. If (\ref{E:c-axiom-instances}) is empty, then $\gamma_{i}^{\ast}=(\true\ra\tau)\ra\tau$ and hence $\Intau\Vdash \gamma_{i}^{\ast}$. Let us denote a pure derivation that supports the last claim by $\cD_{2}$.
	
	If (\ref{E:c-axiom-instances}) is nonempty, then $\gamma_{i}^{\ast}=
	(\bigwedge_{1\le j\le k}(B^{\ast}_{j}\vee(B^{\ast}_{j}\rightarrow\tau))\ra\tau)\ra\tau$. Then we observe:
	\[
	\begin{array}{l}
	\Intau\Vdash ((B^{\ast}_{j}\ra\tau)\wedge((B^{\ast}_{j}\ra\tau)\ra\tau))\ra\tau, ~\text{that is}\\
	\Intau\Vdash ((B^{\ast}_{j}\vee(B^{\ast}_{j}\rightarrow\tau))\ra\tau)\ra\tau~\text{and hence}\\
	\Intau\Vdash ((B^{\ast}_{j}\vee(B^{\ast}_{j}\rightarrow\tau))\ra\tau)\leftrightarrow\tau;~\text{the latter in turn implies that}\\
	\Intau\Vdash ((B^{\ast}_{1}\vee(B^{\ast}_{1}\rightarrow\tau))\ra ((B^{\ast}_{2}\vee(B^{\ast}_{2}\rightarrow\tau))\ra\tau))\ra\tau,~\text{that is}\\
	\Intau\Vdash(\bigwedge_{1\le j\le 2}(B^{\ast}_{j}\vee(B^{\ast}_{j}\rightarrow\tau))\ra\tau)\ra\tau;~\text{thus, repeating this, we obtain}\\
	\Intau\Vdash(\bigwedge_{1\le j\le k}(B^{\ast}_{j}\vee(B^{\ast}_{j}\rightarrow\tau))\ra\tau)\ra\tau.
	\end{array}
	\]
	We denote by $\cD_{3}$ a pure derivation supporting the last claim.
	
	Case 4: $\gamma^{i}$ is an instance of the axiom (c), that is  $\gamma_{i}=\sim\!\tau\ra(\lambda_{j}\ra(\lambda_{j}\ra\tau))$. Consequently,
	$\gamma_{i}^{\ast}=A^{\ast}
	\ra (\lambda_{j}[\sim\!\tau:A^{\ast}]\vee(\lambda_{j}[\sim\!\tau:A^{\ast}]\ra\tau))$. As it is well-known, (see, e.g.,~\cite{kle52}, {\S} 26)
	\[
	\begin{array}{l}
	\Intau\Vdash A^{\ast}
	\ra (\lambda_{j}[\sim\!\tau:\true]\vee(\lambda_{j}[\sim\!\tau:\true]\ra\tau)),~\text{that is}\\
	\Intau\Vdash \bigwedge_{1\le j\le k}(B^{\ast}_{j}\vee(B^{\ast}_{j}\rightarrow\tau))\ra 
	(B^{\ast}_{j}\vee(B^{\ast}_{j}\rightarrow\tau)).
	\end{array}
	\]
	Therefore, $\Intau\Vdash\gamma_{i}^{\ast}$. We denote a pure derivation supporting the last claim by $\cD_{4}$.
	
	Case 5: $\gamma_{i}=\tau\ra\sim\!\tau$. Then either $\gamma_{i}^{\ast}=
	\tau\ra \true$ or $\gamma_{i}^{\ast}=\tau\ra \bigwedge_{1\le j\le k}(B^{\ast}_{j}\vee(B^{\ast}_{j}\rightarrow\tau))$. Obviously, in both cases $\Intau\Vdash\gamma_{i}^{\ast}$. We denote a pure derivation supporting the last claim by $\cD_{5}$.
	
	Case 6: $\gamma_{i}$ is obtained by modus ponens from  $\gamma_{l}$ and  $\gamma_{l}\ra\gamma_{i}$. Obviously, then $\gamma_{i}^{\ast}$ is obtained by modus ponens from  $\gamma_{l}^{\ast}$ and  $\gamma_{l}^{\ast}\ra\gamma_{i}^{\ast}$.
	
	Now we define:
	\[
	[\gamma_{i}^{\ast}]=\begin{cases}
	\begin{array}{cl}
	\gamma_{i}^{\ast} &\text{if $\gamma_{i}$ falls in Case 1};\\
	\cD_{1 } &\text{if $\gamma_{i}$ falls in Case 2};\\
	\cD_{2} &\text{if $\gamma_{i}$ falls in the first subcase of Case 3};\\
	\cD_{3} &\text{if $\gamma_{i}$ falls in the second subcase of Case 3};\\
	\cD_{4} &\text{if $\gamma_{i}$ falls in Case 4};\\
	\cD_{5} &\text{if $\gamma_{i}$ falls in Case 5}.
	\end{array}
	\end{cases}
	\]
	
	Further, we denote:
	\[
	\cD^{\ast}: [\gamma_{1}^{\ast}],\ldots,[\gamma_{n}^{\ast}].
	\]
	
	It should be clear that $\cD^{\ast}:\KMt+\alpha\Vdash\beta[\sim\!\tau:A^{\ast}]$.
\end{proof}

\begin{prop}\label{P:equipollence}
	The calculi {\em $\KMt$} and {\em $\Intau$} are $\Lantau$-equipollent\emph{;} that is, for any  $\Lantau$-formulas $A^{\ast}$ and $B^{\ast}$,
	{\em 
		\[
		\KMt +A^{\ast}\vdash B^{\ast}\Longleftrightarrow \Intau + A^{\ast}\vdash B^{\ast}.
		\]}
\end{prop}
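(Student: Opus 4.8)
The plan is to split the biconditional into its two directions. The implication $\Intau + A^{\ast}\vdash B^{\ast} \Rightarrow \KMt + A^{\ast}\vdash B^{\ast}$ is immediate: $\KMt$ is built on the $\Int$-axioms in the larger language $\Lantaut\supseteq\Lantau$ together with extra axioms, and it uses the same two rules, so every $\Intau$-derivation is verbatim a $\KMt$-derivation. All the work lies in the converse. Here I would first invoke the well-known fact cited after the notion of refined derivation to replace the given $\KMt$-derivation by a refined one with the same last formula $B^{\ast}$, and then use the remark following Definition~\ref{D:pure-derivation}: it suffices to convert this refined $\KMt$-derivation of the $\sim$-free formula $B^{\ast}$ from the $\sim$-free premise $A^{\ast}$ into a \emph{pure} one (i.e.\ one with $M(\cD)=\emptyset$), since a pure such derivation yields $\Intau + A^{\ast}\vdash B^{\ast}$ outright.

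I would carry out this purification by well-founded induction on the measure $\mu(\cD)=(\mathrm{rank}(\cD),k(\cD))$, ordered lexicographically, where $\mathrm{rank}(\cD)\in\mathcal{I}$ is as in Definition~\ref{D:rank} and $k(\cD)$ is the number of formulas in $M(\cD)$ different from $\sim\tau$. Since $\mathcal{I}$ satisfies the descending chain condition and $\mathbb{N}$ is well-ordered, the lexicographic product is well-founded. The base case is $\mathrm{rank}(\cD)=r_{0}$: then $M(\cD)=\emptyset$, the derivation is already pure, and we are done.

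For the inductive step, suppose $\mathrm{rank}(\cD)=(m,n)\neq r_{0}$. The crucial point throughout is that, because $A^{\ast}$ and $B^{\ast}$ are $\sim$-free, no formula $\sim\gamma$ is ever a subformula of the premise $A^{\ast}$ or of the conclusion $B^{\ast}$; hence every hypothesis ``$\sim\gamma$ is not a subformula of $\alpha$'' in the two lemmas is automatic, and every replacement $B^{\ast}[\sim\gamma:\delta]$ leaves $B^{\ast}$ unchanged, so the derived conclusion remains $B^{\ast}$ at each stage. If $m\ge 2$, I pick a maximal $\sim$-formula $\sim\gamma$ of top degree $m$; then $\gamma$ contains $\sim$, so $\gamma\neq\tau$, and Lemma~\ref{P:key-lemma-one} applies, landing in its second alternative and producing a refined derivation of $B^{\ast}$ of rank $t\ll s$ with $t\neq s$, so the induction hypothesis finishes this case. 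If $m=1$ and $M(\cD)\neq\{\sim\tau\}$, there is a maximal $\sim A^{\ast}_{i}$ with $A^{\ast}_{i}\neq\tau$; applying Lemma~\ref{P:key-lemma-one} with $\gamma=A^{\ast}_{i}$ gives its first alternative, which replaces $\sim A^{\ast}_{i}$ by $\sim\tau$, leaving the rank $(1,n)$ but strictly lowering $k(\cD)$, so the induction hypothesis again applies. Finally, if $m=1$ and $M(\cD)=\{\sim\tau\}$, Lemma~\ref{P:key-lemma-two} yields a pure derivation of $B^{\ast}[\sim\tau:C^{\ast}]=B^{\ast}$, that is of rank $r_{0}$, completing the induction.

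The two key lemmas being assumed, the genuinely delicate part of the present argument is choosing a well-founded measure that simultaneously accommodates both alternatives of Lemma~\ref{P:key-lemma-one}: the rank-dropping one, used when a $\sim$-formula of degree $\ge 2$ is peeled off, and the rank-preserving one, used to normalize the degree-$1$ maximal $\sim$-formulas to $\sim\tau$. This is exactly why the secondary component $k(\cD)$ is needed, and one must verify that each invocation of a lemma strictly decreases $\mu$. The remaining elementary but essential check is that $\sim$-freeness of $A^{\ast}$ and $B^{\ast}$ both validates the non-subformula hypotheses of the lemmas and guarantees that the substitutions they perform never alter the conclusion $B^{\ast}$.
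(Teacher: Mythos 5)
Your proposal is correct and follows the paper's own route: the converse direction is the trivial inclusion of calculi, and the main direction is obtained by repeatedly applying Lemma~\ref{P:key-lemma-one} to a refined derivation until $M(\cD)=\lbrace\sim\!\tau\rbrace$ and then finishing with Lemma~\ref{P:key-lemma-two}, with the $\sim$-freeness of $A^{\ast}$ and $B^{\ast}$ discharging the lemmas' side conditions and keeping the conclusion equal to $B^{\ast}$, exactly as in the paper. Your only addition is the explicit lexicographic measure $(\mathrm{rank}(\cD),k(\cD))$, which is a welcome precision rather than a different method, since the paper's ``maybe more than one time'' silently passes over the fact that the first alternative of Lemma~\ref{P:key-lemma-one} need not lower the rank, so termination really does require the secondary count of non-$\sim\!\tau$ maximal formulas that you introduce.
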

\begin{proof}
Let $\cD:\KMt +A^{\ast}\vdash_{s} B^{\ast}$ be a refined derivation of rank $s=(m,n)$, where $m,n\ge 1$. Using Lemma~\ref{P:key-lemma-one}, maybe more than one time, we obtain a refine derivation $\cD^{\prime}:\KMt +A^{\ast}\vdash_{t} B^{\ast}$ with $M(\cD^{\prime})=\lbrace\bn\tau\rbrace$. Then, we apply Lemma~\ref{P:key-lemma-two}, to get
$\KMt +A^{\ast}\Vdash B^{\ast}$. The latter means that  $\Intau + A^{\ast}\vdash B^{\ast}$.	
\end{proof}
\begin{cor}\label{C:general-equipollence}
	For any set $\Gamma$ of $\Lantau$-formulas and any $\Lantau$-formula $A^{\ast}$, the following equivalence holds\emph{:}
	{\em 
		\[
		\KMt +\Gamma\vdash A^{\ast}\Longleftrightarrow \Intau + \Gamma\vdash A^{\ast}.
		\]}
\end{cor}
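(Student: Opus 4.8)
The plan is to reduce the statement to the single-premise case already settled in Proposition~\ref{P:equipollence}. The implication from right to left is immediate: every axiom and inference rule of $\Intau$ is available in $\KMt$ (the latter merely adds the schemata (a)--(d) and operates in the larger language $\Lantaut$), so any $\Intau$-derivation of $A^{\ast}$ from $\Gamma$ is verbatim a $\KMt$-derivation. Hence I would concentrate on the converse, $\KMt + \Gamma \vdash A^{\ast} \Rightarrow \Intau + \Gamma \vdash A^{\ast}$.

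First I would invoke the finiteness of derivations: a $\KMt$-derivation of $A^{\ast}$ from $\Gamma$ is a finite sequence and therefore uses instances of only finitely many members $C_{1},\ldots,C_{n}$ of $\Gamma$, so that $\KMt + \lbrace C_{1},\ldots,C_{n}\rbrace \vdash A^{\ast}$. Next I would collapse this finite set of premises into a single $\Lantau$-formula. Rename variables so that $C_{1},\ldots,C_{n}$ become pairwise variable-disjoint formulas $C_{1}',\ldots,C_{n}'$, and put $C = C_{1}' \we \cdots \we C_{n}'$, which is again an $\Lantau$-formula. The claim to establish is that, over either calculus, the premise set $\lbrace C_{1},\ldots,C_{n}\rbrace$ and the single premise $C$ yield exactly the same theorems.

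The delicate point---and the step I expect to be the main obstacle---is that in these calculi substitution may be applied to premises but not to formulas derived later, so a premise behaves as an axiom schema rather than as a single asserted formula. This is precisely where variable-disjointness is used. Given any substitution $\sigma$, the instance $C^{\sigma} = C_{1}'^{\sigma} \we \cdots \we C_{n}'^{\sigma}$ is derivable from the premise $C$, and by $\we$-elimination each $C_{i}'^{\sigma}$ is derivable; since the variables of $C_{i}'$ are disjoint from those of the remaining conjuncts, $C_{i}'^{\sigma}$ depends only on the restriction of $\sigma$ to the variables of $C_{i}'$, so every instance of the schema $C_{i}'$ (equivalently of $C_{i}$) is obtainable. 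Conversely $C$ is a conjunction of substitution instances of the $C_{i}$, hence derivable from $\lbrace C_{1},\ldots,C_{n}\rbrace$ together with all of its own instances. Thus $C$ functions as an axiom schema interderivable with $\lbrace C_{1},\ldots,C_{n}\rbrace$, and this argument is insensitive to whether the ambient substitutions range over $\Lantau$ or $\Lantaut$.

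Finally I would assemble the chain. From $\KMt + \lbrace C_{1},\ldots,C_{n}\rbrace \vdash A^{\ast}$ the combination lemma gives $\KMt + C \vdash A^{\ast}$; applying Proposition~\ref{P:equipollence} with the single premise taken to be $C$ and the conclusion to be $A^{\ast}$ yields $\Intau + C \vdash A^{\ast}$; and since $C$ is, over $\Intau$ as well, an axiom schema interderivable with $\lbrace C_{1},\ldots,C_{n}\rbrace \subseteq \Gamma$, replacing each use of an instance of $C$ by its derivation from $\Gamma$ converts this into $\Intau + \Gamma \vdash A^{\ast}$, as required.
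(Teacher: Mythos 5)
Your proposal is correct and takes essentially the paper's route: the paper's entire proof is the one-line remark that the corollary follows immediately from Proposition~\ref{P:equipollence}, the intended reduction being exactly yours --- a derivation uses only finitely many premises $C_1,\ldots,C_n$, which are collapsed into a single $\Lantau$-premise to which Proposition~\ref{P:equipollence} is then applied, after which the premises are restored on the $\Intau$ side. One inessential detour: the variable-disjoint renaming is superfluous, because substitution applies to a premise (a premise acts as a schema), so for every substitution $\sigma$ the instance $(C_1\wedge\cdots\wedge C_n)^{\sigma}$ is itself derivable from the plain conjunction and $\wedge$-elimination already yields each $C_i^{\sigma}$; taking distinct instances of the conjunction for distinct $\sigma$ gives all instances of every $C_i$ with no renaming needed.
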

\emph{Proof}~follows immediately from Proposition~\ref{P:equipollence}.\\

\subsection{Completeness of {\em$\KMt$}}
We intend to prove the following.
\begin{prop}\label{P:KMt-completeness}
For any $\Lantaut$-formula $\alpha$, {\em$\KMt\vdash\alpha$} if, and only if, any $\tau\bn$-expansion validates $\alpha$. 
\end{prop}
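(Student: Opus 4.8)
The plan is to prove the two directions separately, treating the ``only if'' part as a soundness check and the ``if'' part by a Lindenbaum--Tarski construction. For soundness, I would verify that each of the four axioms (a)--(d) of $\KMt$ evaluates to $\on$ under every valuation into an arbitrary $\tau\bn$-expansion $(\fA_{\tau},\bn)$, using that $\tau$ is interpreted by $\bn\on$ and that $\bn\tau=\bn\bn\on=\bn\ze$ by Proposition~\ref{P:bn-properties}.e. Then axiom (a) becomes the identity $\bn x=(x\ra\tau)\we\bn\tau$, which is exactly Proposition~\ref{P:bn-properties}.i; axiom (b) becomes $(\bn\ze\ra\bn\on)\ra\bn\on=\on$, true by Definition~\ref{D:bn-negation}.d; axiom (c) becomes $\bn\ze\le x\ve(x\ra\bn\on)$, which holds since $(\bn\on,\bn\ze)$ is an $\cE$-pair by Corollary~\ref{C:bn-gives-pair} and hence $\bn\ze$ enriches $\bn\on$ in the sense of Definition~\ref{D:enrichable}.c; and axiom (d) becomes $\bn\on\le\bn\ze$, true by Proposition~\ref{P:bn-properties}.a. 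Since the $\Int$-axioms hold in the Heyting reduct and both modus ponens and substitution preserve validity (the latter because validity quantifies over all valuations), every $\KMt$-theorem is validated by every $\tau\bn$-expansion.

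For completeness I would argue by contraposition through the Lindenbaum--Tarski algebra $\mathfrak{L}$ of $\KMt$, whose universe consists of the classes $[\alpha]$ of $\Lantaut$-formulas modulo provable equivalence in $\KMt$. Its Heyting reduct is a Heyting algebra because $\KMt$ extends $\Int$. The first point to settle is that the map $[\alpha]\mapsto[\sim\alpha]$ is well defined, i.e.\ that provable equivalence is a congruence with respect to $\sim$; this follows from axiom (a), for since $\KMt\vdash\sim\alpha\leftrightarrow(\alpha\ra\tau)\we\sim\tau$, equivalence of $\alpha$ and $\beta$ forces equivalence of $\sim\alpha$ and $\sim\beta$ via the $\Int$-congruence for $\we$ and $\ra$.

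The decisive step is to show that this operation is a $\bn$-negation with $\bn\on=[\tau]$, so that $\mathfrak{L}$ is a $\tau\bn$-expansion. Rather than checking the identities of Definition~\ref{D:bn-negation} directly, I would verify that $([\tau],[\sim\tau])$ is an $\cE$-pair in $\mathfrak{L}$ and then invoke Proposition~\ref{P:bn-negation-1}: condition (a) of Definition~\ref{D:enrichable} is axiom (d); condition (b), namely $[\sim\tau]\ra[\tau]=[\tau]$, combines axiom (b) with the $\Int$-theorem $\tau\ra(\sim\tau\ra\tau)$; and condition (c), namely $[\sim\tau]\le[\gamma]\ve([\gamma]\ra[\tau])$ for every element $[\gamma]$, is axiom (c) together with closure under substitution. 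By Proposition~\ref{P:bn-negation-1}, $x\mapsto(x\ra[\tau])\we[\sim\tau]$ is then a $\bn$-negation with $\bn\on=[\tau]$, and axiom (a) shows it coincides with $[\alpha]\mapsto[\sim\alpha]$. Thus $\mathfrak{L}$ is a $\tau\bn$-expansion, and if every $\tau\bn$-expansion validates $\alpha$ then in particular $\mathfrak{L}\models\alpha$, so the canonical valuation $v_{0}\colon p\mapsto[p]$ gives $[\alpha]=v_{0}(\alpha)=\on$, which means $\KMt\vdash\alpha$.

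I expect the main obstacle to be exactly this bridge from the syntactic axioms (a)--(d) to the algebraic identities of Definition~\ref{D:bn-negation} on $\mathfrak{L}$: the cleanest way is not to match the identities one by one but to exhibit $([\tau],[\sim\tau])$ as an $\cE$-pair and let Proposition~\ref{P:bn-negation-1} do the packaging. The one delicate point there is that condition (c) of enrichment must hold for \emph{every} element of $\mathfrak{L}$, which is secured precisely because $\KMt$ is closed under substitution, so axiom (c) survives with $p$ replaced by an arbitrary formula $\gamma$.
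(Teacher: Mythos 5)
Your proposal is correct and follows essentially the same route as the paper: the paper's proof likewise verifies that axioms (a)--(d), rewritten as identities, hold in every $\tau\bn$-expansion (via Proposition~\ref{P:bn-properties}, Corollary~\ref{C:bn-gives-pair} and Proposition~\ref{P:t(x)}), and conversely shows that any $\tau$-expansion with a unary operation satisfying them is a $\tau\bn$-expansion by exhibiting $(\tau,\bn\tau)$ as an $\cE$-pair and invoking Proposition~\ref{P:bn-negation-1} --- exactly your packaging. The only difference is that you spell out the Lindenbaum--Tarski construction (including the well-definedness of $\sim$ on equivalence classes and the role of substitution in securing enrichment condition (c) for every element), which the paper leaves implicit behind its opening ``it suffices to show'' reduction.
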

\begin{proof}
It suffices to show that all proper axioms $(\text{a})-(\text{d})$ of $\KMt$ are valid in any $\tau\bn$-expansion and, conversely, if a $\tau$-expansion with a unary operation $\bn x$ satisfies $(\text{a})-(\text{d})$ of $\KMt$, then it is a $\tau\bn$-expansion.

First we rewrite the proper axioms of $\KMt$ as identities:
\[
\begin{array}{cl}
(\text{a}^{\prime}) &\bn x=(x\ra\tau)\wedge\bn\tau,\\
(\text{b}^{\prime}) &(\bn\tau\ra\tau)\le\tau,\\
(\text{c}^{\prime}) &\bn\tau\le(x\vee(x\ra\tau)),\\
(\text{d}^{\prime}) &\tau\le\bn\tau.
\end{array}
\]

Now let $(\fA_{\tau},\bn)$ be a $\tau\bn$-expansion. Then, we recall, not only the identities $(\text{a})$--$(\text{d})$ of Definition~\ref{D:bn-negation} are true but also $\bn\on=\tau$ (Definition~\ref{D:tau-tilde-expasion}). The latter and Proposition~\ref{P:bn-properties}.e imply that
$\bn\ze=\bn\tau$. And, by virtue of Corollary~\ref{C:bn-gives-pair}, we conclude that
$(\tau,\bn\tau)$ is an $\cE$-pair in $\fA_{\tau}$. This immediately implies that the identities $(\text{b}^{\prime})$--$(\text{d}^{\prime})$ are valid in $(\fA_{\tau},\bn)$. By virtue of Proposition~\ref{P:t(x)},
$(\text{a}^{\prime})$ is also valid.

Next assume that the identities $(\text{a}^{\prime})$--$(\text{d}^{\prime})$ are valid in a $\tau$-expansion $\fA_{\tau}$ with a unary operation $\bn x$. From $(\text{b}^{\prime})$--$(\text{d}^{\prime})$ we derive that $(\tau,\bn\tau)$ is an $\cE$-pair in $\fA_{\tau}$. According to Proposition~\ref{P:bn-negation-1}, $\bn x$ is a $\bn$-negation in $\fA_{\tau}$ and $\tau=\bn\on$; that is $(\fA_{\tau},\bn)$ is a $\tau\bn$-expansion.
\end{proof}

It is clear that the last proposition admits the following generalization.
\begin{cor}\label{C:KM-completeness}
Let $\Lambda\cup\lbrace\alpha\rbrace$ be a set  of $\Lantaut$-formulas. Then 
{\em\[
\KMt+\Lambda\vdash \alpha\Longleftrightarrow\Lambda\models\alpha~(\textit{for all $\tau\bn$-expansions}).	
\]}		
\end{cor}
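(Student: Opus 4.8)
The plan is to establish the equivalence as a strong (global) completeness theorem, proving the two directions separately and obtaining the reverse inclusion by a Lindenbaum--Tarski construction adapted to a set $\Lambda$ of premises.

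For the soundness direction ($\Rightarrow$), I would fix a $\tau\bn$-expansion $(\fA_{\tau},\bn)$ with $\fA_{\tau}\models\Lambda$ and argue by induction along a refined derivation witnessing $\KMt+\Lambda\vdash\alpha$. The instances of $\Int$-axioms are valid in the Heyting reduct of $\fA_{\tau}$, hence in $\fA_{\tau}$; the proper axioms $(\text{a})$--$(\text{d})$ are valid in every $\tau\bn$-expansion by the first half of the proof of Proposition~\ref{P:KMt-completeness}; and each member of $\Lambda$ is valid by hypothesis. It then remains only to note that modus ponens preserves the value $\on$ in any Heyting algebra and that substitution preserves validity, since for any valuation $v$ and substitution $s$ the composite $v\circ s$ is again a valuation into $\fA_{\tau}$. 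Thus $\fA_{\tau}\models\alpha$, and therefore $\Lambda\models\alpha$.

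For the completeness direction ($\Leftarrow$), I would prove the contrapositive: assuming $\KMt+\Lambda\nvdash\alpha$, I construct a $\tau\bn$-expansion that models $\Lambda$ but refutes $\alpha$. On the algebra of $\Lantaut$-formulas put $\beta\equiv\gamma$ iff $\KMt+\Lambda\vdash\beta\leftrightarrow\gamma$. Since $\Int$ proves the replacement laws and axiom (a) gives $\bn x=(x\ra\tau)\we\bn\tau$ (so provable equivalence of arguments yields provable equivalence of the $\bn$-values), $\equiv$ is a congruence, and the quotient $\fA_{\tau}$ is a $\tau$-expansion carrying an induced operation $\bn$. Because $(\text{a})$--$(\text{d})$ are theorems of $\KMt+\Lambda$, the identities $(\text{a}^{\prime})$--$(\text{d}^{\prime})$ of Proposition~\ref{P:KMt-completeness} hold in $\fA_{\tau}$; by the second half of that proof $\fA_{\tau}$ is a genuine $\tau\bn$-expansion (in particular $\bn\on=\tau$, which follows from (a) and (d) after substituting $\true$ for the variable). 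Taking the canonical quotient valuation $v_{0}$, we have $v_{0}(\alpha)=[\alpha]\neq\on$, since $\KMt+\Lambda\nvdash\alpha$ forces $\KMt+\Lambda\nvdash\alpha\leftrightarrow\true$; hence $\alpha\notin L(\fA_{\tau})$ and $\fA_{\tau}\not\models\alpha$.

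The step that will require the most care, and where the global character of $\models$ is essential, is verifying $\fA_{\tau}\models\Lambda$, i.e. that every $\beta\in\Lambda$ belongs to $L(\fA_{\tau})$ and so is valid under \emph{all} valuations, not merely under $v_{0}$. Because $\fA_{\tau}$ is generated by the classes of the variables, an arbitrary valuation $v$ has the form $v=[\,\cdot\,]\circ s$ for a suitable substitution $s$ (choose representatives with $v(p)=[s(p)]$ and extend homomorphically, noting $v(\tau)=[\tau]$). Then $v(\beta)=[s(\beta)]$, and $v(\beta)=\on$ reduces to $\KMt+\Lambda\vdash s(\beta)$, which holds because $\beta\in\Lambda$ and the relation $\KMt+\Lambda\vdash$ is closed under substitution into its premises. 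This is precisely the point at which the definition of $\models$ as the substitution-closed global consequence is matched by the syntactic $\vdash$; once it is secured, the contrapositive is complete and the stated equivalence follows.
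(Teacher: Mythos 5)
Your proof is correct and matches what the paper intends: the paper proves Proposition~\ref{P:KMt-completeness} and then merely asserts the corollary as a ``clear'' generalization, and your soundness induction together with the Lindenbaum--Tarski quotient supplies exactly the standard details that assertion tacitly invokes. In particular, you rightly flag the two delicate points --- that $\sim$ respects provable equivalence via axiom (a), and that validity of $\Lambda$ in the quotient under \emph{all} valuations rests on the closure of $\KMt+\Lambda\vdash$ under substitution into premises, matching the global, substitution-invariant reading of $\models$ --- neither of which the paper makes explicit.
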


Also, we obtain the following.
\begin{cor}\label{K^*_tau-variety}
The class $K_{\tau}^{\ast}$ is a variety. Moreover, for any $\Lantau$-formula $A^{\ast}$,
{\em\[
K_{\tau}^{\ast}\models A^{\ast}\Longleftrightarrow\Intau\vdash A^{\ast}.
\]}
\end{cor}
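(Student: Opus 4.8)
The plan is to derive both assertions from the completeness of $\KMt$ with respect to $\tau\bn$-expansions (Proposition~\ref{P:KMt-completeness}) together with the $\Lantau$-equipollence of $\KMt$ and $\Intau$ (Proposition~\ref{P:equipollence}, Corollary~\ref{C:general-equipollence}). The decisive preliminary observation is that an $\Lantau$-formula $A^{\ast}$ contains no occurrence of $\bn$, so its value under a valuation into a $\tau\bn$-expansion $(\fA_{\tau},\bn)$ depends only on the $\tau$-reduct $\fA_{\tau}$. Hence $(\fA_{\tau},\bn)\models A^{\ast}$ iff $\fA_{\tau}\models A^{\ast}$, and therefore
\[
K_{\tau}\models A^{\ast}\Longleftrightarrow K_{\tau}^{\ast}\models A^{\ast}.
\]

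From here the ``Moreover'' part is a short chain of equivalences. First I apply Proposition~\ref{P:KMt-completeness} to the $\bn$-free (hence $\Lantaut$-) formula $A^{\ast}$: this gives $\KMt\vdash A^{\ast}$ iff every $\tau\bn$-expansion validates $A^{\ast}$, that is, iff $K_{\tau}\models A^{\ast}$. Combining this with the displayed equivalence and with Corollary~\ref{C:general-equipollence} taken for empty $\Gamma$, which yields $\KMt\vdash A^{\ast}\Leftrightarrow\Intau\vdash A^{\ast}$, I obtain
\[
K_{\tau}^{\ast}\models A^{\ast}\Longleftrightarrow K_{\tau}\models A^{\ast}\Longleftrightarrow \KMt\vdash A^{\ast}\Longleftrightarrow\Intau\vdash A^{\ast},
\]
which is exactly the asserted characterization.

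For the first assertion I would read off from this characterization that the $\Lantau$-equational theory of $K_{\tau}^{\ast}$ is $E:=\{A^{\ast}=\true : \Intau\vdash A^{\ast}\}$; since $\Intau$ is $\Int$ read in $\Lantau$, the set $E$ axiomatizes the variety of all $\tau$-expansions of Heyting algebras, i.e. $\mathbf{Mod}(E)$. By Birkhoff's theorem $K_{\tau}^{\ast}$ is a variety precisely when $K_{\tau}^{\ast}=\mathbf{Mod}(E)$, equivalently when $K_{\tau}^{\ast}$ is closed under $\Ho$, $\Su$ and $\Pro$. Closure under $\Pro$ is immediate, since the product of the enriching elements enriches the product constant; closure under $\Ho$ follows from the congruence/filter correspondence (Proposition~\ref{P:congruences=filters}), as the conditions (a)--(c) of Definition~\ref{D:enrichable} are transported along quotient maps.

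The step I expect to be the main obstacle is the upgrade from $\mathbf{Var}(K_{\tau}^{\ast})=\mathbf{Mod}(E)$ to the literal equality $K_{\tau}^{\ast}=\mathbf{Mod}(E)$, that is, closure of the reduct class under $\Su$. Since $K_{\tau}^{\ast}$ is visibly closed under $\Ho$ and $\Pro$, everything reduces to showing that every $(\fA,a)\models E$ is itself the $\tau$-reduct of a genuine $\tau\bn$-expansion, i.e. that the distinguished element $a$ is enrichable. This is the algebraic content of one-element enrichment and the $\tau$-analogue of Kuznetsov's Corollary~2 recorded as Corollary~\ref{C:L(A_tau)=L(B_tau)}: the tool is Proposition~\ref{P:packing}, which packs $\fA_{\tau_{a}}$ into $(\de{[\fA_{\tau_{a}}]},\bn)$, so that $(\fA,a)$ at least embeds into the $\tau$-reduct of an actual $\tau\bn$-expansion generating the same variety. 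Controlling the $\Lantau$-equational theory across this packing, so as to land in $K_{\tau}^{\ast}$ rather than merely in $\Su\Ho\Pro(K_{\tau}^{\ast})$, is the delicate point, and it is exactly here that the equipollence and completeness results are brought back to bear.
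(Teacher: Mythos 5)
Your proof of the ``Moreover'' equivalence is correct and is essentially the paper's own argument: the paper likewise combines Corollary~\ref{C:general-equipollence} (with $\Gamma=\emptyset$) and Proposition~\ref{P:KMt-completeness}, leaving tacit your preliminary observation that validity of a $\bn$-free formula in a $\tau\bn$-expansion depends only on its $\tau$-reduct. The divergence concerns the first assertion. There the paper's entire proof is the sentence ``It should be clear that $K_{\tau}^{\ast}$ is closed under formation of direct products, of subalgebras and of homomorphic images'' --- i.e.\ it asserts without argument precisely the step you isolated as the main obstacle. Your closures under $\Ho$ and $\Pro$ are fine, but the remaining step is not merely delicate: it fails. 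Every $\tau$-expansion of a Heyting algebra models your set $E$, so $K_{\tau}^{\ast}=\mathbf{Mod}(E)$ would say that the designated element is enrichable in \emph{every} $\tau$-expansion, which the paper itself refutes: the least element $a=\ze$ of the chain $\fC$ of type $1+\omega^{\ast}$ is not enrichable. The same example kills $\Su$-closure directly: by Proposition~\ref{P:delta-h(x)-2} (and the remark preceding Proposition~\ref{P:packing}), $h_{\mathfrak{C}}(a)$ is enriched with $\de{h_{\mathfrak{C}}(a)}$ in $\delta[\fC_{a}]$, so the $\tau$-expansion $(\delta[\fC_{a}],\tau_{h(a)})$ lies in $K_{\tau}^{\ast}$, while its $\tau$-subalgebra on $h_{\mathfrak{C}}[\fC]$, isomorphic to $\fC_{\tau_{a}}$, does not. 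Equivalently, your own Birkhoff analysis run in reverse shows that variety-hood of $K_{\tau}^{\ast}$ together with the displayed equivalence would force $K_{\tau}^{\ast}$ to be the class of \emph{all} $\tau$-expansions --- a contradiction.

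So the gap in your proposal cannot be closed, because the first sentence of the Corollary is false as literally stated, and the paper's one-line justification asserts the failing closure. Packing (Proposition~\ref{P:packing}) and Corollary~\ref{C:L(A_tau)=L(B_tau)} give only $\fA_{\tau}\in\Su(K_{\tau}^{\ast})$ with the same \Lantau-logic, never membership in $K_{\tau}^{\ast}$ itself, so no amount of control over the equational theory across the packing will land you inside $K_{\tau}^{\ast}$. What is true, and is all the paper actually uses afterwards (e.g.\ in Proposition~\ref{P:generated-by-tau-expansions}), is the ``Moreover'' equivalence together with $\Ho$- and $\Pro$-closure: $K_{\tau}^{\ast}$ \emph{generates} the variety of all $\tau$-expansions, whose equational theory is given by $\Intau$. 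Your instinct to stop at ``the delicate point'' was sound; the honest resolution is to weaken the first assertion to this generation statement (or to replace $K_{\tau}^{\ast}$ by its closure under $\Su$), not to search for a missing lemma.
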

\begin{proof}
	It should be clear that $K_{\tau}^{\ast}$ is closed under formation of direct products, of subalgebras and of homomorphic images.
	
	Now, using Corollary~\ref{C:general-equipollence} with $\Gamma=\emptyset$ and Proposition~\ref{P:KMt-completeness}, we receive the equivalence above.
\end{proof}

\section{Connecting the two viewpoints on one-element enrichment}\label{S:connecting-viewpoints}
 We connect the two viewpoints discussed above in Sections~\ref{S:localization-algebraic} and~\ref{S:localization-proof-theoretic-0} via the following two propositions and corollary. Namely, in this section we aim to show that any $\ta$-expansion can be embedded into such a $\ta$-expansion, where the element corresponding to the constant $\ta$ is enrichable, and both $\ta$-expansions generation one and the same variety, or, equivalently, have the same logic in $\Lantau$.
\begin{prop}\label{P:generated-by-tau-expansions}
Any variety $\cV$ of $\tau$-expansions is generated by the class $\cV\cap K^{\ast}_{\tau}$.
\end{prop}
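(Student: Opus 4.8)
The plan is to recast the assertion as an equality of $\Lantau$-equational theories and then route it through the proof-theoretic equipollence of $\Intau$ and $\KMt$. Since $\cV\cap K^{\ast}_{\tau}\sbe\cV$, the inclusion $L(\cV)\sbe L(\cV\cap K^{\ast}_{\tau})$ is automatic, so the variety generated by $\cV\cap K^{\ast}_{\tau}$ is contained in $\cV$; to obtain the reverse it suffices to show that the two classes validate exactly the same $\Lantau$-formulas, i.e. that for every $\Lantau$-formula $A^{\ast}$ one has $\cV\models A^{\ast}$ iff $(\cV\cap K^{\ast}_{\tau})\models A^{\ast}$. I would fix $\Gamma=L(\cV)\cap\Lantau$, the set of all $\Lantau$-formulas valid throughout $\cV$; because $\cV$ is a variety it is axiomatised by these formulas read as identities, so $\cV=\set{\fA_{\tau}}{\fA_{\tau}\models\Gamma}$.

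First I would treat the left-hand side. As $\Gamma=L(\cV)$ and since the $\Int$-axioms, modus ponens and substitution all preserve validity in every $\tau$-expansion, soundness gives $\Intau+\Gamma\vdash A^{\ast}\Rightarrow\cV\models A^{\ast}$, while the converse is immediate because $\cV\models A^{\ast}$ means $A^{\ast}\in\Gamma$; hence $\cV\models A^{\ast}$ iff $\Intau+\Gamma\vdash A^{\ast}$. For the right-hand side I would identify $\cV\cap K^{\ast}_{\tau}$ with the class of $\tau$-reducts of those $\tau\bn$-expansions that model $\Gamma$: a $\tau$-expansion lies in $K^{\ast}_{\tau}$ exactly when its constant $\tau$ is enrichable, in which case Proposition~\ref{P:unique} together with Corollary~\ref{C:bn-pair-isomorphism} makes the $\bn$-negation with $\bn\on=\tau$ unique, and since $\Gamma$ and $A^{\ast}$ are $\bn$-free their validity is unaffected by passing between a $\tau\bn$-expansion and its $\tau$-reduct. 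Consequently $(\cV\cap K^{\ast}_{\tau})\models A^{\ast}$ says precisely that every $\tau\bn$-expansion modelling $\Gamma$ models $A^{\ast}$, which by Corollary~\ref{C:KM-completeness} is equivalent to $\KMt+\Gamma\vdash A^{\ast}$.

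It remains to splice the two sides together. Corollary~\ref{C:general-equipollence}, the $\Lantau$-equipollence of $\KMt$ and $\Intau$, yields $\KMt+\Gamma\vdash A^{\ast}$ iff $\Intau+\Gamma\vdash A^{\ast}$, so the chain
\[
(\cV\cap K^{\ast}_{\tau})\models A^{\ast}\Longleftrightarrow\KMt+\Gamma\vdash A^{\ast}\Longleftrightarrow\Intau+\Gamma\vdash A^{\ast}\Longleftrightarrow\cV\models A^{\ast}
\]
closes. This proves $L(\cV)=L(\cV\cap K^{\ast}_{\tau})$, and since $\cV$ is itself a variety, $\cV$ coincides with the variety generated by $\cV\cap K^{\ast}_{\tau}$; in particular, if $\cV$ is nontrivial the chain already forces $\cV\cap K^{\ast}_{\tau}$ to be nonempty, so no separate existence argument is needed. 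I expect the main obstacle to be the bookkeeping of the middle step, namely correctly matching $\cV\cap K^{\ast}_{\tau}$ with the $\Gamma$-models among the $\tau\bn$-expansions and invoking Corollary~\ref{C:KM-completeness} for the parametrised set $\Gamma$; the genuine mathematical weight rests, as anticipated, on the equipollence of Corollary~\ref{C:general-equipollence}, the local analogue of Kuznetsov's Theorem.
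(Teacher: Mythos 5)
Your proof is correct and follows essentially the same route as the paper's: both fix $\Gamma$ as the set of $\Lantau$-formulas valid in $\cV$, pass between $\cV\models A^{\ast}$ and $\Intau+\Gamma\vdash A^{\ast}$, cross to $\KMt+\Gamma\vdash A^{\ast}$ via the equipollence of Corollary~\ref{C:general-equipollence}, and use the completeness of $\KMt$ with respect to $\tau\bn$-expansions (Corollary~\ref{C:KM-completeness}) to land in $\cV\cap K^{\ast}_{\tau}$. The only differences are cosmetic: the paper argues contrapositively from a countermodel $\fA_{\tau}\not\models A^{\ast}$, whereas you run the biconditional chain directly and spell out what the paper leaves implicit, namely that $\cV=\set{\fA_{\tau}}{\fA_{\tau}\models\Gamma}$ and that $\cV\cap K^{\ast}_{\tau}$ is exactly the class of $\tau$-reducts of $\Gamma$-modelling $\tau\bn$-expansions.
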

\begin{proof}
Let
\[
\Gamma=\set{B^{\ast}}{(\forall~\fA_{\tau}^{\prime}\in\cV)
	(\fA_{\tau}^{\prime}\models B^{\ast})}.
\]

Suppose, for some $\fA_{\tau}\in\cV $, $\fA_{\tau}\not\models A^{\ast}$. Then $\Intau+\Gamma\not\vdash A^{\ast}$. By virtue of Corollary~\ref{C:KM-completeness}, $\KMt+\Gamma\not\vdash A^{\ast}$. This implies that there is a $\tau\bn$-expansion $(\fB_{\tau},\bn)$ such that $\fB_{\tau}\models\Gamma$ and $\fB_{\tau}\not\models A^{\ast}$. It remains to notice that 
$\fB_{\tau}\in\cV$.
\end{proof}
\begin{prop}\label{P:L(A_tau)=L(C_tau)}
For any $\tau$-expansion $\fA_{\tau}$, there is a $\tau\bn$-expansion $(\fC_{\tau},\bn)$ such that $\fA_{\tau}\suba\fC_{\tau}$ and $L(\fA_{\tau})=L(\fC_{\tau})$.
\end{prop}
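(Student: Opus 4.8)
The plan is to take $\fC_\tau:=\de{[\fA_{\tau_{a}}]}$, where $a$ is the element interpreting $\tau$, equipped with the $\bn$-negation determined by the $\cE$-pair $(h_{\mathfrak{A}}(\tau_{a}),\de{h_{\mathfrak{A}}(\tau_{a})})$ supplied by Proposition~\ref{P:delta-h(x)-2} (via Proposition~\ref{P:bn-negation-1}); this makes $(\fC_\tau,\bn)$ a $\tau\bn$-expansion in which the element interpreting $\tau$ is genuinely enrichable. The embedding $\fA_\tau\suba\fC_\tau$ is then immediate from Proposition~\ref{P:packing}, which also records that $(\fC_\tau,\bn)$ is generated as a $\tau\bn$-expansion by the image of $|\fA|$ (cf.\ Proposition~\ref{P:generating}). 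Since $\fA_\tau\suba\fC_\tau$, every $\Lantau$-formula valid in $\fC_\tau$ is valid in $\fA_\tau$, so $L(\fC_\tau)\subseteq L(\fA_\tau)$ for free; the whole content lies in the reverse inclusion.

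For the reverse inclusion I would argue contrapositively, transferring a refutation in $\fC_\tau$ down to $\fA_\tau$, and this is exactly where the proof-theory of Section~\ref{S:localization-proof-theoretic-0} enters. Suppose $A^{\ast}$ is an $\Lantau$-formula with $\fC_\tau\not\models A^{\ast}$, witnessed by a valuation $v$ into $(\fC_\tau,\bn)$. Because $(\fC_\tau,\bn)$ is generated by $h_{\mathfrak{A}}[\fA]$ together with $\bn\ze=\de{h_{\mathfrak{A}}(a)}$, each $v(p_i)$ is a $\Lantaut$-term over generators lying in $h_{\mathfrak{A}}[\fA]$; collecting these, $v(A^{\ast})=w(\theta(A^{\ast}))$ for a single $\Lantaut$-substitution $\theta$ and a valuation $w$ whose base values all lie in $h_{\mathfrak{A}}[\fA]\cong\fA$ (with $w(\sim\!\tau)=\bn\ze$). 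Using the defining identity $\bn x=(x\ra\tau)\we\sim\!\tau$ of Proposition~\ref{P:bn-negation-1}, I would collapse every maximal $\sim$-subformula of $\theta(A^{\ast})$ down to $\sim\!\tau$, mirroring Lemma~\ref{P:key-lemma-one}, so that the refuted formula carries all of its $\sim$'s in the form $\sim\!\tau$. Then Lemma~\ref{P:key-lemma-two} (equivalently Proposition~\ref{P:equipollence} and Corollary~\ref{C:general-equipollence}) is invoked to purge $\sim\!\tau$, replacing it by a $\sim$-free formula and thereby producing a $\sim$-free substitution instance of $A^{\ast}$ already refuted inside $\fA_\tau$. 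As $L(\fA_\tau)$ is closed under substitution, this forces $A^{\ast}\notin L(\fA_\tau)$, the desired contrapositive.

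Equivalently, and perhaps more cleanly, the target reduces to the single statement $(\fC_\tau,\bn)\models L(\fA_\tau)$: setting $\Gamma=L(\fA_\tau)$, one has $\KMt+\Gamma\vdash A^{\ast}\Leftrightarrow\Intau+\Gamma\vdash A^{\ast}\Leftrightarrow A^{\ast}\in\Gamma$ by Corollary~\ref{C:general-equipollence} and the deductive closure of $\Gamma$, so once $(\fC_\tau,\bn)\models\Gamma$ is known, soundness of the $\tau\bn$-expansion $(\fC_\tau,\bn)$ on $\KMt+\Gamma$ yields $L(\fA_\tau)\subseteq L(\fC_\tau)$ and completes the proof. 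The hard part — and the step I expect to be the main obstacle — is precisely establishing $(\fC_\tau,\bn)\models\Gamma$, i.e.\ that the adjoined enriching element $\bn\ze=\de{h_{\mathfrak{A}}(a)}$, which lies outside $h_{\mathfrak{A}}[\fA]$ exactly when $a$ fails to be enrichable in $\fA$, nonetheless validates every $\Lantau$-law of $\fA_\tau$. This cannot be obtained from completeness (Corollary~\ref{C:KM-completeness}) alone, since that would be circular — modelling $\Gamma$ is what we are after — and must instead come from the conservativity of the $\sim$-apparatus over $\Lantau$ furnished by the equipollence. The delicate point is to match the $\sim\!\tau$-elimination of Lemma~\ref{P:key-lemma-two} with evaluation at the specific element $\de{h_{\mathfrak{A}}(a)}$, exploiting that $(\tau,\de{h_{\mathfrak{A}}(a)})$ is an $\cE$-pair, so that $\de{h_{\mathfrak{A}}(a)}\le x\ve(x\ra\tau)$ holds for every $x$.
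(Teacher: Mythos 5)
Your plan has a genuine gap, and it sits exactly where you flag it yourself. By choosing $\fC_{\tau}:=\delta[\fA_{\tau_{a}}]$ you turn the proposition into the statement $L(\fA_{\tau})=L(\delta[\fA_{\tau_{a}}])$, which is essentially the sufficient condition of Corollary~\ref{C:reducibility-4} --- i.e.\ the main theorem-level difficulty of the whole paper, established only in Sections~\ref{S:Stone-embedding-properties-main}--\ref{S:completing-proof} via Conjecture~\ref{conjecture} and Corollary~\ref{C:L(A_tau)=L(B_tau)}, the latter of which \emph{depends on the present proposition}. So within the paper's architecture your route is circular unless you supply an independent argument, and the mechanism you sketch does not supply one: Lemmas~\ref{P:key-lemma-one} and~\ref{P:key-lemma-two} are rewriting procedures on \emph{refined derivations} (they manipulate occurrences of axiom instances in a proof figure), and there is no version of them that acts on a semantic refutation $v(\theta(A^{\ast}))\neq\on$. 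The obstruction is concrete: in $\delta[\fA_{\tau_{a}}]$ the value of $\sim\!\tau$ is $\delta h_{\mathfrak{A}}(a)$, which by Corollary~\ref{C:meet-F_a-2} is the \emph{infimum} of $\set{h_{\mathfrak{A}}(x)}{x\in F_{a}}$; replacing $\sim\!\tau$ by a finite conjunction of formulas valued in $h_{\mathfrak{A}}(F_{a})$, as in Lemma~\ref{P:key-lemma-two}, changes the value of the term, and since $\sim\!\tau$ occurs both positively and negatively (under implications), no finite approximant need preserve the refutation. Your second paragraph correctly reduces everything to $(\fC_{\tau},\bn)\models L(\fA_{\tau})$ and correctly observes that completeness alone would be circular --- but then leaves precisely that step unproved, so the proposal is a program rather than a proof.

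For contrast, the paper's actual proof deliberately avoids naming a concrete $\fC_{\tau}$ at this stage and is soft: with $\cV$ the variety generated by $\fA_{\tau}$, Proposition~\ref{P:generated-by-tau-expansions} (which is where the equipollence machinery of Section~\ref{S:localization-proof-theoretic-0} is actually consumed, through Corollary~\ref{C:KM-completeness}) gives $\fA_{\tau}\in\Ho\Su\Pro(\cV\cap K^{\ast}_{\tau})$; the congruence extension property (Proposition~\ref{P:congruences=filters}) converts this to $\fA_{\tau}\in\Su\Ho\Pro(\cV\cap K^{\ast}_{\tau})$; and the first-order sentence $\exists x\forall y((\tau\le x)\,\&\,(x\ra\tau=\tau)\,\&\,(x\le y\vee(y\ra\tau)))$ is preserved under direct products and homomorphic images, so the superalgebra $\fC_{\tau}$ so obtained lies in $\cV$ and has $\tau$ enrichable, whence $L(\fA_{\tau})=L(\fC_{\tau})$ and Proposition~\ref{P:bn-negation-1} furnishes the $\bn$-negation. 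The identification of (the packed subalgebra of) this abstract $\fC_{\tau}$ with $\delta[\fA_{a}]$ is exactly what Conjecture~\ref{conjecture} and Section~\ref{S:completing-proof} later accomplish; your attempt, in effect, tries to do that last step first. Your trivial inclusion $L(\fC_{\tau})\subseteq L(\fA_{\tau})$ and the embedding via Proposition~\ref{P:packing} are fine, but the reverse inclusion needs the paper's detour (or a genuinely new argument you have not given).
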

\begin{proof}
Let $\cV$ be the variety of the $\tau$-expansions generated by $\fA_{\tau}$. According to Proposition~\ref{P:generated-by-tau-expansions}, $\fA_{\tau}\in\Ho\Su\Pro(\cV\cap K^{\ast}_{\tau})$. In view of Proposition~\ref{P:congruences=filters}, the $\tau$-expansions have the congruence extension property and hence $\fA_{\tau}\in\Su\Ho\Pro(\cV\cap K^{\ast}_{\tau})$. Now we notice that in each algebra of $\cV\cap K^{\ast}_{\tau}$, the element $\tau$ is enrichable. This will be kept in any direct product of algebras of $\cV\cap K^{\ast}_{\tau}$ and in any homorphic image of the latter, for the first-order formula
\[
\exists x\forall y((\tau\le x)\& (x\ra\tau=\tau)\& (x\le y\vee(y\ra\tau)))
\]
is preserved under formation of direct products and homomorphic images; cf.~\cite{mal73}, Sections 7.4 and 7.5.
Thus there is a $\tau$-expansion $\fC_{\tau}$ such that $\fA_{\tau}\suba\fC_{\tau}$ and $\tau$ is enrichable in $\fC_{\tau}$. Then, by virtue of Proposition~\ref{P:bn-negation-1}, a $\bn$-negation can be defined in $\fC_{\tau}$ so that $\bn\on=\tau$. It remains to notice that $L(\fA_{\tau})=L(\fC_{\tau})$.
\end{proof}

\begin{cor}\label{C:L(A_tau)=L(B_tau)}
For any $\tau$-expansion $\fA_{\tau}$, there is a $\tau\bn$-expansion $(\fB_{\tau},\bn)$ such that $\fA_{\tau}\pack\fB_{\tau}$ and $L(\fA_{\tau})=L(\fB_{\tau})$.
\end{cor}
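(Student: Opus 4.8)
The plan is to obtain the desired $\tau\bn$-expansion by cutting down the algebra produced in Proposition~\ref{P:L(A_tau)=L(C_tau)} to the part generated by $|\fA|$, since the only gap between that proposition and the present corollary is the strengthening of the subalgebra relation $\suba$ to the packing relation $\pack$, and packing (Definition~\ref{D:packing}) is precisely subalgebra-hood together with being generated by $|\fA|$.

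First I would invoke Proposition~\ref{P:L(A_tau)=L(C_tau)} to fix a $\tau\bn$-expansion $(\fC_{\tau},\bn)$ with $\fA_{\tau}\suba\fC_{\tau}$ and $L(\fA_{\tau})=L(\fC_{\tau})$. Next I would let $(\fB_{\tau},\bn)$ be the $\tau\bn$-subexpansion of $(\fC_{\tau},\bn)$ generated by the set $|\fA|$. Because $K_{\tau}$ is a variety, hence closed under subalgebras, this generated subalgebra is again a $\tau\bn$-expansion, so $(\fB_{\tau},\bn)$ is legitimate. By construction its $\tau$-reduct $\fB_{\tau}$ contains $|\fA|$ and is closed under the Heyting operations and under $\tau$; since $\fA_{\tau}$ is just the $\tau$-expansion carried on $|\fA|$, we get $\fA_{\tau}\suba\fB_{\tau}$. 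As $(\fB_{\tau},\bn)$ is a $\tau\bn$-expansion generated by $|\fA|$, Definition~\ref{D:packing} yields $\fA_{\tau}\pack\fB_{\tau}$.

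It remains to check $L(\fA_{\tau})=L(\fB_{\tau})$. Here I would use that the logic of a $\tau$-expansion is antitone with respect to $\suba$: since every valuation into a subalgebra is also a valuation into the ambient algebra, a subalgebra relation forces the logic of the larger algebra to be contained in that of the smaller. Applying this to the chain $\fA_{\tau}\suba\fB_{\tau}\suba\fC_{\tau}$ yields $L(\fC_{\tau})\sbe L(\fB_{\tau})\sbe L(\fA_{\tau})$. Since $L(\fA_{\tau})=L(\fC_{\tau})$ by the choice of $(\fC_{\tau},\bn)$, the two outer terms coincide and $L(\fB_{\tau})$ is squeezed between them, giving the required equality.

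The argument is essentially routine once Proposition~\ref{P:L(A_tau)=L(C_tau)} is in hand; the only point that needs genuine care is that restricting to the $\tau\bn$-expansion generated by $|\fA|$ does not destroy the property of being a $\tau\bn$-expansion, which is exactly where I rely on $K_{\tau}$ being a variety. Beyond this bookkeeping I expect no real obstacle, the decisive content having already been absorbed into the passage from $\fA_{\tau}$ to $\fC_{\tau}$.
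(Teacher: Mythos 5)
Your proof is correct and takes essentially the same route as the paper: the paper likewise passes to the subalgebra of $(\fC_{\tau},\bn)$ generated by $|\fA|\cup\lbrace\tau\rbrace$ (which coincides with your subalgebra generated by $|\fA|$, since $\tau$ is a nullary operation of the signature and so belongs to every subalgebra) and then concludes via the same sandwich $L(\fC_{\tau})\sbe L(\fB_{\tau})\sbe L(\fA_{\tau})$ together with Proposition~\ref{P:L(A_tau)=L(C_tau)}.
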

\begin{proof}
Let $(\fC_{\tau},\bn)$ be a $\tau\bn$-expansion from Proposition~\ref{P:L(A_tau)=L(C_tau)}. Let $(\fB_{\tau},\bn)$ be the subalgebra of $(\fC_{\tau},\bn)$ generated by $|\fA|\cup\lbrace\tau\rbrace$. It remains to notice that $L(\fC_{\tau})\sbe L(\fB_{\tau})\sbe L(\fA_{\tau})$ and, then,  apply Proposition~\ref{P:L(A_tau)=L(C_tau)}.
\end{proof}

Our goal is to prove the following.
\begin{con}\label{conjecture}
Let $\fA$ and $\fB$ be Heyting algebra such that $\fA\suba\fB$. Also, let $a\in|\fA|$ and $(a,\astar)$ be an $\cE$-pair in $\fB$. Then, if
$\fA_{\tau_{a}}\pack\fB_{\tau_{a}}$, then $\fB$ is isomorphic to $\de{[\fA_{a}]}$.	
\end{con}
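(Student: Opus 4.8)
The plan is to build a Heyting isomorphism $\Phi:\Ad\to\fB$ with $\Phi(\hA{x})=x$ for every $x\in|\fA|$ and $\Phi(\de{\hA{a}})=\astar$. First I would record the two inputs that put $\fB$ and $\Ad$ on an equal footing. By Definition~\ref{D:packing} together with Proposition~\ref{P:generating}, applied with $\bn\ze=\astar$ (the $\bn$-negation attached to the $\cE$-pair $(a,\astar)$ has $\bn\ze=(\ze\ra a)\we\astar=\astar$), the hypothesis $\fA_{\tau_{a}}\pack\fB_{\tau_{a}}$ says exactly that $\fB$ is generated as a Heyting algebra by $|\fA|\cup\{\astar\}$; dually, $\Ad$ is by definition generated by $\hA{\fA}\cup\{\de{\hA{a}}\}$, and by Proposition~\ref{P:delta-h(x)-2} the pair $(\hA{a},\de{\hA{a}})$ is an $\cE$-pair in $\Ad$. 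Thus each algebra is a one-element enrichment of (a copy of) $\fA$ at $a$.

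Second, I would establish a normal form for such enrichments: in any Heyting algebra generated by a subalgebra $\fA$ together with a single element $\astar$ enriching $a\in|\fA|$, every element can be written as $(u\we\astar)\ve v$ with $u,v\in|\fA|$. Closure of this set under $\we$ and $\ve$ is an immediate distributive-lattice computation, and it plainly contains $|\fA|$ and $\astar$; the real content is closure under $\ra$, which is forced by the enrichment identities $a\le\astar$, $\astar\ra a=a$, $\astar\le x\ve(x\ra a)$ and $\neg\astar=\ze$ (Proposition~\ref{P:negation}). Applying this to both $\fB$ and $\Ad$, the candidate map becomes the normal-form assignment $(u\we\astar)\ve v\mapsto(\hA{u}\we\de{\hA{a}})\ve\hA{v}$.

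Third, and this is where I expect the real work to lie, I would show that the partial order on normal forms is governed by one and the same condition, expressed purely in $\fA$ and $a$, in $\fB$ as in $\Ad$; this simultaneously yields well-definedness, injectivity and the homomorphism property, so that $\Phi$ is an isomorphism. The natural tool is the prime-filter calculus obtained from the Stone embedding. Writing $r:\SB\to\SA$, $r(G)=G\cap|\fA|$, for the (surjective) map dual to $\fA\suba\fB$, membership $(u\we\astar)\ve v\in G$ is controlled by the trace $r(G)$ together with the single bit $\astar\in G$; and by Proposition~\ref{P:delta-h(x)-2} applied to $\fB$ one has $\{G:\astar\in G\}=\hB{\astar}=\de{\hB{a}}$, while Proposition~\ref{P:delta-h(x)-1} identifies $\de{\hA{a}}=\hA{a}\cup\max\noinA{a}$ on the $\fA$-side. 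Over traces $F$ with $a\in F$ the bit is forced to $1$, and over traces with $F\notin\de{\hA{a}}$ it is forced to $0$; indeed, using $\astar\le x\ve(x\ra a)$ and $\astar\ra a=a$ one checks that $\astar\in G$ implies $r(G)\in\de{\hA{a}}$.

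The main obstacle is the behaviour over the boundary traces $F\in\max\noinA{a}$: here one must prove that the prime-filter fibers of $\fB$ and of $\Ad$ over $F$ carry the same distribution of the bit $\astar\in(\cdot)$, so that the intrinsic $\fA$-condition characterizing $(u_{1}\we\astar)\ve v_{1}\le(u_{2}\we\astar)\ve v_{2}$ is literally the same in both algebras. This is precisely where the full force of the packing hypothesis (generation by $|\fA|\cup\{\astar\}$, ruling out spurious prime filters and spurious collapses) must be used, and it is the step for which the dedicated Stone-embedding analysis of the following sections is needed. As a sanity check that the approach closes, the top element already transfers cleanly: $(u\we\astar)\ve v=\on$ in $\fB$ forces, via the surjectivity of $r$, both $u\ve v=\on$ in $\fA$ and $\de{\hA{a}}\ve\hA{v}=\on$ in $\HA{\SA}$, whence $(\hA{u}\we\de{\hA{a}})\ve\hA{v}=(\hA{u}\ve\hA{v})\we(\de{\hA{a}}\ve\hA{v})=\on$ in $\Ad$ by distributivity, and the argument is symmetric in $\fB$ and $\Ad$.
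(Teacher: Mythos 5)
Your setup is sound --- via Proposition~\ref{P:generating} the packing hypothesis does say that $\fB$ is generated as a Heyting algebra by $|\fA|\cup\lbrace\astar\rbrace$, and $\de{[\fA_{a}]}$ is by construction generated by $\hA{\fA}\cup\lbrace\de{\hA{a}}\rbrace$ --- but what you have written is a plan, not a proof, and the gap sits exactly where the statement's entire content lies. Two steps are missing. First, your normal-form lemma (every element of a one-element enrichment is $(u\we\astar)\ve v$ with $u,v\in|\fA|$) is asserted with the remark that closure under $\ra$ ``is forced by the enrichment identities''; this is a genuinely nontrivial claim (one must compute $\astar\ra v$ and $v\ra\astar$ against the boundary filters in $\max\noinA{a}$, and nothing in Definition~\ref{D:enrichable} makes this a one-line computation), and the paper never proves or uses it. Second, and decisively, you explicitly defer the step that would make $\Phi$ well defined, injective, and a homomorphism --- that the order on normal forms is governed by the same $\fA$-intrinsic condition in $\fB$ as in $\de{[\fA_{a}]}$, i.e., that the prime-filter fibers over each $F\in\max\noinA{a}$ carry the same distribution of the bit $\astar\in(\cdot)$ --- saying it ``is the step for which the dedicated Stone-embedding analysis of the following sections is needed.'' Since that is precisely the conjecture being proved, the proposal is circular at its crucial point; your sanity check for the top element does not probe the boundary fibers where the difficulty lives.

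For comparison, the paper closes this step without any normal form: it uses the inverse-image map $\varphi^{-1}$, which is an embedding of $\HA{\SA}$ into $\HA{\SB}$ (Maksimova), and shows it matches generators to generators, namely $\varphi^{-1}(\hA{x})=\hB{x}$ for $x\in|\fA|$ and $\varphi^{-1}(\de{\hA{a}})=\hB{\astar}$. The second equality is where the real work is done, by combining two ingredients you partially rediscovered: $\widetilde{\varphi}(\hB{\astar})=\de{\hA{a}}$ (Proposition~\ref{P:axiliary}, resting on the lifting results Proposition~\ref{P:on-prime-filter} and Corollary~\ref{C:axiliary-two} for maximal filters omitting $a$), giving $\hB{\astar}\sbe\varphi^{-1}(\de{\hA{a}})$; and the meet characterization $\hB{\astar}=\bigwedge\set{\hB{x}}{x\in F_a}$ with $F_a=\set{x\in|\fA|}{x\ra a\le x}$ (Proposition~\ref{P:meet-F_a} and Corollary~\ref{C:meet-F_a-1}), giving the reverse inclusion since $\varphi^{-1}(\de{\hA{a}})$ is a lower bound of $\set{\hB{x}}{x\in F_a}$. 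Once generators match under an embedding of the ambient algebras, the restriction to $\de{[\fA_{a}]}$ is automatically an isomorphism onto $h_{\mathfrak{B}}[\fB]\cong\fB$ by packing; well-definedness, injectivity and the homomorphism property come for free, and no fiber-by-fiber analysis of the bit $\astar\in G$ is required. If you want to salvage your approach, replace the normal-form route by proving the single equality $\varphi^{-1}(\de{\hA{a}})=\hB{\astar}$; your observation that $\astar\in G$ forces $\varphi(G)\in\de{\hA{a}}$ is one half of Proposition~\ref{P:axiliary}, but you still need the converse lifting (Corollary~\ref{C:axiliary-two}) and the filter $F_a$ argument, neither of which your proposal supplies.
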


\section{Properties related to Stone embedding}\label{S:Stone-embedding-properties-main}
First, we define two filters of Heyting algebra, among which we designate one, $F_{a}$. In the sequel, this filter will play a key role.

\subsection{Some filters of Heyting algebra}\label{S:filters}
In this subsection we use \cite{rs70} as a main reference, though employed implicitly.\\

Let us fix a Heyting algebra {\fA} and an element $a\in|\fA|$. Then,
we define:
\[
X_{a}=\set{x\in|\fA|}{x\ra a=a}.
\]

\begin{prop}\label{P:Xa}
	$X_{a}$ is a filter of {\fA}.  Moreover, $X_{a}$ is proper if and only if $a\neq\on$.
\end{prop}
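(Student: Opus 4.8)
The plan is to verify directly that $X_a = \set{x\in|\fA|}{x\ra a = a}$ satisfies the two defining properties of a filter — being an upward-closed set that is closed under finite meets and contains the unit — and then to characterize when $X_a$ is proper. The key observation driving everything is the residuation adjunction in a Heyting algebra, namely that $x\we y \le a$ if and only if $x \le y\ra a$, together with the standard identities $\on\ra a = a$ and $x\we(x\ra a) = x\we a \le a$.

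First I would check that $\on\in X_a$, which is immediate since $\on\ra a = a$, so $X_a$ is nonempty. Next, for upward closure, suppose $x\in X_a$ and $x\le y$; I want $y\ra a = a$. Since $x\le y$ gives $y\ra a \le x\ra a = a$ by antitonicity of $\ra$ in its first argument, and since $a\le y\ra a$ always holds (as $y\we a\le a$), we get $y\ra a = a$, so $y\in X_a$. For closure under meets, suppose $x,y\in X_a$; I want $(x\we y)\ra a = a$. Using the currying identity $(x\we y)\ra a = x\ra(y\ra a)$, and substituting $y\ra a = a$ then $x\ra a = a$, I obtain $(x\we y)\ra a = x\ra a = a$, so $x\we y\in X_a$. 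This establishes that $X_a$ is a filter.

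For the properness claim, recall that a filter is proper exactly when it omits $\ze$ (equivalently, is not all of $|\fA|$). If $a = \on$, then $x\ra a = x\ra\on = \on = a$ for every $x$, so $X_a = |\fA|$ is improper. Conversely, if $a\neq\on$, I would show $\ze\notin X_a$: indeed $\ze\in X_a$ would mean $\ze\ra a = a$, but $\ze\ra a = \on$ in any Heyting algebra, forcing $a = \on$, a contradiction. Hence $X_a$ is proper precisely when $a\neq\on$.

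I do not anticipate a genuine obstacle here; the statement is a routine consequence of the residuation laws. The only point requiring a little care is the meet-closure step, where one must invoke the identity $(x\we y)\ra a = x\ra(y\ra a)$ rather than trying to argue elementwise; getting the order of substitutions right is the sole place where a careless calculation could go astray.
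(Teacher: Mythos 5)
Your proof is correct and takes essentially the same approach as the paper's: closure under meets via the currying identity $(x\we y)\ra a=x\ra(y\ra a)$, and properness via the observation that $\ze\in X_a$ if and only if $a=\on$. The only cosmetic difference is in the upward-closure step, where you invoke antitonicity of $\ra$ in its first argument together with $a\le y\ra a$, while the paper computes $y\ra a=y\ra(x\ra a)=(y\we x)\ra a=x\ra a=a$ using $y\we x=x$; these are the same routine residuation argument.
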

\begin{proof}
	Suppose $x_{1},x_{2}\in X_{a}$, that is, $x_{1}\ra a=a$
	and $x_{2}\ra a=a$. Then we have:
	\[
	\begin{array}{rl}
	x_{1}\we x_{2}\ra a &= x_{1}\ra (x_{2}\ra a)\\
	&= x_{1}\ra a\\
	&= a.
	\end{array}
	\]
	
	Next let $x\ra a=a$, and $y\geq x$. Then we obtain:
	\[
	\begin{array}{rl}
	y\ra a &= y\ra (x\ra a)\\
	&= y\we x\ra a\\
	&= x\ra a\\
	&= a.
	\end{array}
	\]
	
	Finally, it is obvious that $\ze\in X_{a}$ if and only if $a=\on$.
\end{proof}

Now we define
\[
F_{a}=\set{x\ve(x\ra a)}{x\in|\fA|}.
\]

\begin{prop}\label{P:Fa}
	For any Heyting algebra {\fA} and element $a\in|\fA|$, the following
	conditions are equivalent:
	\[
	\begin{array}{cl}
	(\emph{\text{a}}) &y\in F_{a};\\
	(\emph{\text{b}}) &y\ra a\leq a\mbox{ and }a\leq y;\\
	(\emph{\text{c}}) &y\ra a\leq y.
	\end {array}
	\]
\end{prop}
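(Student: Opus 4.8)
The plan is to establish the three conditions as equivalent by proving the cyclic chain $(\text{a})\Ra(\text{b})\Ra(\text{c})\Ra(\text{a})$, each link of which rests on nothing more than standard Heyting-algebra identities. Throughout I will rely on two facts that hold in every Heyting algebra and for every element: $a\le x\ra a$ (since $a\we x\le a$), and the modus-ponens inequality $z\we(z\ra a)\le a$.

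For $(\text{a})\Ra(\text{b})$, suppose $y=x\ve(x\ra a)$ for some $x\in|\fA|$. The inequality $a\le y$ is immediate, since $a\le x\ra a\le x\ve(x\ra a)=y$. For $y\ra a\le a$, I would use the Heyting identity $(u\ve v)\ra a=(u\ra a)\we(v\ra a)$ to compute $y\ra a=(x\ra a)\we((x\ra a)\ra a)$, and then apply the modus-ponens inequality with $z=x\ra a$ to conclude $y\ra a\le a$.

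The link $(\text{b})\Ra(\text{c})$ is purely a matter of transitivity: from $y\ra a\le a$ and $a\le y$ one reads off $y\ra a\le y$ at once. The interesting link is $(\text{c})\Ra(\text{a})$, where a suitable witness must be supplied. Assuming $y\ra a\le y$, I would simply take $x=y$ itself; then $x\ve(x\ra a)=y\ve(y\ra a)=y$, the last equality holding precisely because $y\ra a\le y$. This exhibits $y$ as a member of $F_{a}$ and closes the cycle.

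I do not anticipate a genuine obstacle here; the only step requiring a moment's thought is the choice of witness $x=y$ in $(\text{c})\Ra(\text{a})$, which is essentially forced once one observes that condition (c) says exactly that $x\ve(x\ra a)$ collapses to $y$ when $x=y$. Everything else is routine manipulation of $\ra$, $\ve$, $\we$ together with the two structural inequalities noted at the outset.
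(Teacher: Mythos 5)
Your proof is correct and follows essentially the same route as the paper: the same cycle $(\text{a})\Ra(\text{b})\Ra(\text{c})\Ra(\text{a})$, with the identical computation $y\ra a=(x\ra a)\we((x\ra a)\ra a)$ in the first link and the same witness $x=y$ in the last. The only cosmetic difference is that the paper simplifies $y\ra a$ all the way to the equality $y\ra a=a$ (using $a\le x\ra a$), whereas you stop at the inequality $y\ra a\le a$, which is all that condition (\text{b}) requires.
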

\begin{proof}
	We prove that $(\text{a})\Rightarrow (\text{b})\Rightarrow (\text{c})\Rightarrow (\text{a})$.
	
	$(\text{a})\Rightarrow (\text{b})$: Let $y\in F_{a}$. Then for some $x\in|\fA|$,
	$y=x\ve(x\ra a)$. It is clear that $a\leq y$. Also,
	\[
	\begin{array}{rl}
	y\ra a & =(x\ra a)\we((x\ra a)\ra a)\\
	&= (x\ra a)\we a\\
	&=a.
	\end{array}
	\]
	
	$(\text{b})\Rightarrow (\text{c})$: Obvious, by transitivity of $\leq$.
	
	$(\text{c})\Rightarrow (\text{a})$: Obvious again, for $y\ra a\leq y$ implies
	$y=y\ve (y\ra a)$.
\end{proof}

\begin{cor}\label{C:Fa}
	$F_{a}=X_{a}\cap[a)$ and hence $F_{a}$ is a filter, all elements of which are dense. 
	Also, $F_{a}=\set{y\in|\fA|}{y\ra a\leq y}$. Moreover, $F_{a}$ is proper if {\fA} is nontrivial.\footnote{The fact that $\set{y\in|\fA|}{y\ra a\leq y}$ is a filter was established in~\cite{esakia2006}, Proposition 4.}
\end{cor}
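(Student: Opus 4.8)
The plan is to derive every clause of the corollary from the characterizations already in hand, namely Proposition~\ref{P:Fa} (the equivalence of conditions (a)--(c)) together with Proposition~\ref{P:Xa} on the filter $X_a$. First I would prove the identity $F_a = X_a \cap [a)$. By Proposition~\ref{P:Fa}(b), membership $y \in F_a$ is equivalent to the conjunction $y \ra a \le a$ and $a \le y$. The second conjunct is exactly the assertion $y \in [a)$. For the first, I would use that in any Heyting algebra $a \le y \ra a$ holds (since $a \we y \le a$), so $y \ra a \le a$ is equivalent to $y \ra a = a$, i.e.\ to $y \in X_a$. Hence $y \in F_a \Leftrightarrow y \in X_a \cap [a)$. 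Since $X_a$ is a filter by Proposition~\ref{P:Xa} and $[a)$ is the principal filter generated by $a$, their intersection $F_a$ is a filter.

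For density I would argue directly from the generating form $y = x \ve (x \ra a)$ and compute $\neg y = \neg x \we \neg(x \ra a)$. From $\ze \le a$ we get $\neg x = (x \ra \ze) \le (x \ra a)$, so by antitonicity of negation $\neg(x \ra a) \le \neg\neg x$; therefore $\neg y \le \neg x \we \neg\neg x = \ze$, whence $\neg y = \ze$ and $y$ is dense.

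The alternative description $F_a = \set{y\in|\fA|}{y\ra a\leq y}$ requires nothing new: it is precisely the equivalence $(\text{a})\Leftrightarrow(\text{c})$ of Proposition~\ref{P:Fa}. Finally, for properness I would use this last description: $\ze \in F_a$ would force $\ze \ra a \le \ze$, i.e.\ $\on \le \ze$, so if $\fA$ is nontrivial this is impossible and $\ze \notin F_a$, which makes $F_a$ proper.

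The computation is essentially mechanical once Propositions~\ref{P:Xa} and~\ref{P:Fa} are invoked; the only step that calls for a little care is the density argument, where one must select the right inequality ($\neg x \le x \ra a$) so that $\neg y$ collapses to $\ze$. I expect that to be the sole non-routine point.
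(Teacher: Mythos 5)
Your proposal is correct and follows exactly the route the paper intends: the corollary is stated without proof precisely because every clause falls out of Propositions~\ref{P:Xa} and~\ref{P:Fa} as you describe (with the identity $F_a = X_a \cap [a)$ from condition~(b), the alternative description from the equivalence $(\text{a})\Leftrightarrow(\text{c})$, and properness from nontriviality). Your density computation $\neg\bigl(x \ve (x\ra a)\bigr) \le \neg x \we \neg\neg x = \ze$ is a sound filling-in of the one detail the paper leaves implicit; an equally quick variant is to note that $y \ra a \le y$ and $\neg y \le y \ra a$ give $\neg y \le y$, whence $\neg y = \ze$.
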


\subsection{Some properties of Stone embedding}\label{S:properties-of-Stone-embedding}
The main references here are~\cite{rs70}, though implicitly, ~\cite{mak72} (see also~\cite{gm05}) and also \cite{mur86}, {\S}1.

Let $\fA\suba\fB$. We define:
\begin{align*}
&\varphi:\SB\longrightarrow\SA:~G\mapsto G\cap|\fA|;\\
&\widetilde{\varphi}:\HA{\SB}\longrightarrow\HA{\SA}:~\mathcal{U}\mapsto\set{\varphi(G)}{G\in\mathcal{U}}.
\end{align*}

We note the following property:
\begin{equation}\label{E:axiliary}
\widetilde{\varphi}(\mathcal{U}\cup\mathcal{V})=\widetilde{\varphi}(\mathcal{U})\cup\widetilde{\varphi}(\mathcal{V}).
\end{equation}

\begin{prop}\label{P:maksimova-lemma-5}
	Let $\fA$ and $\fB$ be Heyting algebras with $\fA\suba\fB$. 
	Also, let $F\in\SA$ and $a\in|\fA|\setminus F$.
	Then there is a filter $G\in\noinB{a}$ such that $F=G\cap|\fA|$.
\end{prop}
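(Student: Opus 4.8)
The plan is to realise $G$ as a prime filter of $\fB$ that separates a suitable filter from a suitable ideal, invoking the prime filter theorem for distributive lattices from \cite{rs70}. Identifying $\fA$ with its isomorphic copy inside $\fB$, so that $|\fA|\sbe|\fB|$ and the operations agree, I first observe that the complement $D=|\fA|\setminus F$ is a prime ideal of $\fA$ (being the complement of a prime filter), and that $a\in D$. I also note that once a prime filter $G$ of $\fB$ with $G\cap|\fA|=F$ is secured, the membership $G\in\noinB{a}$ is automatic: if $a\in G$ then $a\in G\cap|\fA|=F$, contradicting $a\notin F$. Hence it suffices to produce a prime filter $G$ of $\fB$ with $G\cap|\fA|=F$.

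Next I would form the $\fB$-filter $[F)_{\mathfrak{B}}$ generated by $F$ and the $\fB$-ideal $(D]_{\mathfrak{B}}$ generated by $D$, and apply the prime filter separation theorem to obtain a prime filter $G$ of $\fB$ with $[F)_{\mathfrak{B}}\sbe G$ and $G\cap(D]_{\mathfrak{B}}=\emptyset$. Since $D\sbe(D]_{\mathfrak{B}}$, this yields $G\cap D=\emptyset$, and together with $F\sbe[F)_{\mathfrak{B}}\sbe G$ it gives $G\cap|\fA|=F$: any $x\in G\cap|\fA|$ lies outside $D$ and hence in $F$, while the reverse inclusion $F\sbe G\cap|\fA|$ holds by construction. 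Properness of $G$ comes for free, since $\ze\in D\sbe(D]_{\mathfrak{B}}$ is kept out of $G$.

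The crux — and the only step demanding real work — is verifying that $[F)_{\mathfrak{B}}$ and $(D]_{\mathfrak{B}}$ are disjoint, for only then does the separation theorem apply. Suppose some $b$ lay in both. Then $f\le b$ for some $f\in F$, and $b\le d_{1}\ve\cdots\ve d_{n}$ for some $d_{1},\ldots,d_{n}\in D$. Here I would use two facts: that joins are computed identically in $\fA$ and $\fB$ because $\fA\suba\fB$, and that $D$ is closed under finite joins because $F$ is prime; hence $d:=d_{1}\ve\cdots\ve d_{n}\in D$. Then $f\le d$ with $f\in F$ would force $d\in F$ by upward closure of $F$, contradicting $d\in D$. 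This contradiction establishes the disjointness and completes the argument. I expect no further obstacle, as every other step is a routine consequence of $F$ being a prime filter and of $\fA$ being a subalgebra.
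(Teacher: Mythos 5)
Your proof is correct, but it takes a genuinely different route from the paper's. The paper (adapting, as its footnote notes, an argument hidden in the proof of Lemma 5 of Maksimova) considers the family $\Phi$ of all $\fB$-filters $H$ with $H\cap|\fA|=F$, extracts a maximal member $G$ by Zorn's lemma, and then proves directly that such a maximal $G$ is prime: assuming $x\ve y\in G$ with $x,y\notin G$, it works with the filters $[G\cup\lbrace x\rbrace)_{\mathfrak{B}}$ and $[G\cup\lbrace y\rbrace)_{\mathfrak{B}}$ and uses the Heyting implication (deriving $x\ra z_1\ve z_2\in G$ and $y\ra z_1\ve z_2\in G$) to contradict either the primality of $F$ or the maximality of $G$; the conclusion $a\notin G$ then falls out exactly as in your opening observation. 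You instead reduce the whole statement to the classical prime filter separation theorem for distributive lattices, separating $[F)_{\mathfrak{B}}$ from the ideal $(D]_{\mathfrak{B}}$ generated by the complementary prime ideal $D=|\fA|\setminus F$; your disjointness verification is the right crux and is sound (closure of $D$ under finite joins because $F$ is prime, joins agreeing in $\fA$ and $\fB$ because $\fA\suba\fB$), and the trace computation $G\cap|\fA|=F$ goes through as you state. What your route buys: it is shorter, outsources Zorn to a standard citation, and is purely lattice-theoretic --- it never touches $\ra$, so it actually proves the statement for sublattices of bounded distributive lattices, not just Heyting subalgebras. What the paper's route buys: its maximality technique is reusable, and the paper indeed recycles essentially the same Zorn-plus-primality argument in Proposition~\ref{P:on-prime-filter} to produce a prime filter that is moreover maximal among those omitting $a$ (i.e., lies in $\max\noinB{a}$) --- a stronger conclusion that a bare appeal to the separation theorem does not directly deliver.
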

\begin{proof}\footnote{The argument employed in this proof is a modification of one ``hidden'' in the proof of Lemma 5 of~\cite{mak72}.} First, we define the filter $[F)_{\mathfrak{B}}$ and note that 
	$[F)_{\mathfrak{B}}\cap|\fA|=F$. Thus the set
	\[
	\Phi:=\set{H}{H~\text{is a $\fB$-filter such that  $H\cap|\fA|=F$}}
	\]
	is nonempty. It is obvious that $\Phi$ satisfies the condition of Zorn's lemma and hence $\Phi$ contains a maximal filter $G$ w.r.t. $\subseteq$. We aim to show that $G\in\SB$.
	
	For contradiction, assume that $x\vee y\in G$, but neither $x\in G$ nor $y\in G$. Next, we define two filters: $H_1:=[G\cup\lbrace x\rbrace)_{\mathfrak{B}}$ and $H_2:=[G\cup\lbrace y\rbrace)_{\mathfrak{B}}$. It is obvious that both $H_1$ and $H_2$ are proper. Now we show that either $H_1\cap|\fA|\subseteq G\cap|\fA|$ or $H_2\cap|\fA|\subseteq G\cap|\fA|$. For contradiction, assume that neither of the last is the case, that is, $H_1\cap|\fA|\not\subseteq G\cap|\fA|$ and $H_2\cap|\fA|\not\subseteq G\cap|\fA|$. This implies that there are $z_1\in H_1\cap|\fA|\setminus G\cap|\fA|$ and $z_2\in H_2\cap|\fA|\setminus G\cap|\fA|$, which yields that $z_1\vee z_2\in H_1\cap H_2\cap|\fA|$. The latter in turn implies that
	$x\rightarrow z_1\vee z_2\in G$ and $y\rightarrow z_1\vee z_2\in G$, that is $z_1\vee z_2\in G$. Then, by definition of $G$, $z_1\vee z_2\in F$ and hence either $z_1\in F$ or $z_2\in F$. In both cases, we get a contradiction, for, if, for example, $z_1\in F$, then $z_1\in G\cap|\fA|$. Thus either $H_1\cap|\fA|\subseteq G\cap|\fA|$ or $H_2\cap|\fA|\subseteq G\cap|\fA|$. Now, let us take the first as true. Then we receive: $F\subseteq H_1\cap|\fA|\subseteq G\cap|\fA|=F$. A contradiction, because $G\subset H_1$ and at the same time $G$ is a maximal filter in $\Phi$. Similarly, we get a contradiction, if we start with the second. Thus $G\in\SB$. Since $a\notin G$, $G\in\noinB{a}$.
\end{proof}

\begin{cor}[comp.~\cite{mak72}, Lemma 5]\label{C:maksimova-lemma-5}
Let $\fA$ and $\fB$ be Heyting algebras with $\fA\suba\fB$. For any filter $F\in\SA$, there is a filter $G\in\SB$ such that $F=G\cap|\fA|$; that is to say, the map $\varphi$ is surjective.\footnote{This property is stated in~\cite{mak72}, Lemma 5, but is not discussed there.}
\end{cor}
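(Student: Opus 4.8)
The plan is to read this off directly from Proposition~\ref{P:maksimova-lemma-5}; the only thing I need to supply is a witness element $a$ to which that proposition can be applied. First I would fix an arbitrary $F\in\SA$. Since $F$ is a prime filter, it is in particular proper, so $F\neq|\fA|$ and there exists an element $a\in|\fA|$ with $a\notin F$, that is, $a\in|\fA|\setminus F$. This is precisely the hypothesis under which Proposition~\ref{P:maksimova-lemma-5} operates.

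Applying that proposition to the pair $(F,a)$ then furnishes a filter $G\in\noinB{a}$ with $F=G\cap|\fA|$. In particular $G\in\SB$, and by the very definition of $\varphi$ we obtain $\varphi(G)=G\cap|\fA|=F$. As $F\in\SA$ was arbitrary, every element of $\SA$ lies in the image of $\varphi$, which is exactly the assertion that $\varphi$ is surjective.

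I do not anticipate any real obstacle here: the Zorn's lemma construction that produces a prime filter $G$ of $\fB$ lying over $F$ and avoiding $a$ has already been carried out inside the proof of Proposition~\ref{P:maksimova-lemma-5}. The single new observation is that the properness of a prime filter guarantees the existence of an omitted element $a$, so that the proposition is genuinely applicable to \emph{every} $F\in\SA$, not merely to those for which a distinguished non-member has been specified in advance. In the degenerate situation where $\fA$ is trivial, $\SA$ is empty and the statement holds vacuously, so that case requires no separate argument.
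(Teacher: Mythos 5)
Your proposal is correct and is essentially the paper's proof: the paper simply applies Proposition~\ref{P:maksimova-lemma-5} with the canonical witness $a=\ze$ (which lies outside every prime filter by properness), whereas you select an arbitrary $a\in|\fA|\setminus F$ guaranteed by the same properness. The two arguments are identical in substance, and your remark about the vacuous case of trivial $\fA$ is a harmless addition.
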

\begin{proof} 
We apply Proposition~\ref{P:maksimova-lemma-5} for $a=\ze$.
\end{proof}

\begin{cor}\label{C:axiliary-one}
Let $\fA$ and $\fB$ be Heyting algebras with $\fA\suba\fB$.	For every $x\in|\fA|$, $\widetilde{\varphi}(\hB{x})=\hA{x}$.
\end{cor}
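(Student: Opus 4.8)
The plan is to prove the set equality $\widetilde{\varphi}(\hB{x})=\hA{x}$ by mutual inclusion, unwinding the definitions of $\widetilde{\varphi}$, of the Stone embeddings $h_{\mathfrak{A}}$ and $h_{\mathfrak{B}}$, and of the map $\varphi(G)=G\cap|\fA|$. Recall that $\hB{x}=\set{G\in\SB}{x\in G}$ and $\hA{x}=\set{F\in\SA}{x\in F}$, while by definition $\widetilde{\varphi}(\hB{x})=\set{\varphi(G)}{G\in\hB{x}}=\set{G\cap|\fA|}{G\in\SB,~x\in G}$. So the goal reduces to showing that the prime filters $F\in\SA$ containing $x$ are exactly the sets of the form $G\cap|\fA|$ with $G\in\SB$ and $x\in G$.

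First I would prove the inclusion $\widetilde{\varphi}(\hB{x})\sbe\hA{x}$, which is the routine direction. Take any $G\in\SB$ with $x\in G$; I must check that $F:=G\cap|\fA|$ lies in $\hA{x}$. Since $\fA\suba\fB$ and $x\in|\fA|$, we have $x\in G\cap|\fA|=F$, so it only remains to confirm that $F$ is a prime filter of $\fA$. This is the standard fact that the intersection of a prime $\fB$-filter with the subalgebra $|\fA|$ is a prime $\fA$-filter: closure under $\wedge$ and upward closure are inherited, $F$ is proper because $\ze\notin G$, and primeness follows because if $a\vee b\in F$ with $a,b\in|\fA|$ then $a\vee b\in G$, whence $a\in G$ or $b\in G$ and correspondingly $a\in F$ or $b\in F$.

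The reverse inclusion $\hA{x}\sbe\widetilde{\varphi}(\hB{x})$ is where the real content lies, and here I would invoke Proposition~\ref{P:maksimova-lemma-5} (equivalently its Corollary~\ref{C:maksimova-lemma-5}), which is precisely the tool built to produce a prime filter upstairs restricting to a prescribed one downstairs. Given $F\in\hA{x}$, so $F\in\SA$ with $x\in F$, I need a $G\in\SB$ with $G\cap|\fA|=F$ and $x\in G$. Corollary~\ref{C:maksimova-lemma-5} already supplies some $G\in\SB$ with $G\cap|\fA|=F$; and then automatically $x\in F=G\cap|\fA|\sbe G$, so this same $G$ witnesses $F=\varphi(G)\in\widetilde{\varphi}(\hB{x})$. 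Thus the surjectivity already packaged in the corollary does all the work, and no further filter-extension argument is needed.

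The main obstacle is conceptual rather than computational: it is recognizing that Proposition~\ref{P:maksimova-lemma-5} (or its corollary) is exactly what discharges the nontrivial inclusion, so that the surjectivity of $\varphi$ onto $\SA$ combined with the trivial observation $x\in F\sbe G$ immediately yields membership in the image. Once that is seen, both inclusions are short. One minor point to verify carefully is that when applying Corollary~\ref{C:maksimova-lemma-5} the produced filter $G$ genuinely contains $x$; but since $x\in F$ and $F=G\cap|\fA|$, this is forced and requires no extra hypothesis such as $x\neq\on$ or nontriviality of $\fA$. Hence the proof is a clean two-line argument resting entirely on the preceding corollary.
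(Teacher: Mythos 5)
Your proof is correct and follows essentially the same route as the paper: both reduce the nontrivial inclusion $\hA{x}\sbe\widetilde{\varphi}(\hB{x})$ to the surjectivity of $\varphi$ from Corollary~\ref{C:maksimova-lemma-5}, while the easy inclusion just unwinds the definitions (your explicit check that $G\cap|\fA|$ is a prime $\fA$-filter is already packaged into the paper's definition of $\varphi:\SB\ra\SA$). No gaps.
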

\begin{proof}
For any $x\in|\fA|$, we obtain: 
\[
\begin{array}{rl}
F\in\widetilde{\varphi}(\hB{x}) &\Longleftrightarrow
F=G\cap|\fA|,~\text{for some $G\in\hB{x}$}\\
&\Longleftrightarrow F\in\hA{x}.\quad[\text{in virtue of Corollary~\ref{C:maksimova-lemma-5}}]
\end{array}
\]	
\end{proof}

\begin{prop}[folklore]\label{P:on-prime-filter}
	Let $\fA$ and $\fB$ be Heyting algebras with $\fA\suba\fB$. Also, let
	$F$ be an $\fA$-filter and $a\in|\fA|\setminus F$. Then there is
	a filter $G\in\max\noinB{a}$ such that $F\subseteq G\cap|\fA|$. {\em(Part 1)} In particular, if the algebras $\fA$ and $\fB$ coincide, then there is a filter $G\in\max\noinA{a}$ such that $F\subseteq G$. {\em(Part 2)}
\end{prop}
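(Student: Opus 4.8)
The plan is to run the Zorn's lemma argument of Proposition~\ref{P:maksimova-lemma-5}, but over a different family of filters, so that the filter produced is maximal among prime filters of $\fB$ omitting $a$, rather than one whose trace on $\fA$ is a prescribed prime filter.

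First I would record that the $\fB$-filter $[F)_{\mathfrak{B}}$ generated by $F$ satisfies $[F)_{\mathfrak{B}}\cap|\fA|=F$: if $x\in|\fA|$ and $y\le x$ for some $y\in F$, then $x\in F$ because $F$ is upward closed in $\fA$. Since $a\in|\fA|\setminus F$, this gives $a\notin[F)_{\mathfrak{B}}$, so the family
\[
\Phi:=\set{H}{H\text{ is a }\fB\text{-filter},~F\sbe H,~a\notin H}
\]
is nonempty. A routine check shows that $\Phi$ meets the hypothesis of Zorn's lemma, the union of a $\sbe$-chain of members of $\Phi$ being again a $\fB$-filter that contains $F$ and omits $a$; hence $\Phi$ has a maximal element $G$.

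It then remains to verify that $G$ is prime and that it is maximal in $\noinB{a}$. For primeness I would argue as in Proposition~\ref{P:maksimova-lemma-5}: if $x\ve y\in G$ while $x\notin G$ and $y\notin G$, then $[G\cup\lbrace x\rbrace)_{\mathfrak{B}}$ and $[G\cup\lbrace y\rbrace)_{\mathfrak{B}}$ both properly extend $G$ and so, by maximality of $G$ in $\Phi$, both contain $a$; thus $g_{1}\we x\le a$ and $g_{2}\we y\le a$ for some $g_{1},g_{2}\in G$, and with $g=g_{1}\we g_{2}\in G$ distributivity gives $g\we(x\ve y)\le a$, forcing the contradiction $a\in G$. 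Hence $G\in\SB$, and $a\notin G$ means $G\in\noinB{a}$. For maximality in $\noinB{a}$, suppose $G'\in\noinB{a}$ with $G\sbe G'$; then $F\sbe G\sbe G'$ and $a\notin G'$, so $G'\in\Phi$, and maximality of $G$ in $\Phi$ yields $G=G'$. Finally $F\sbe G\cap|\fA|$, since $F\sbe G$ and $F\sbe|\fA|$. This establishes Part 1, and Part 2 follows by taking $\fB=\fA$, for then $\SB=\SA$ and $G\cap|\fA|=G$.

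The delicate point is not the primeness computation, which is routine, but the step from ``$G$ maximal among $\fB$-filters containing $F$ and omitting $a$'' to ``$G$ maximal in the full poset $\noinB{a}$''. This is secured by the observation that every prime filter lying $\sbe$-above $G$ automatically contains $F$ and therefore already belongs to $\Phi$, so no member of $\noinB{a}$ can strictly extend $G$.
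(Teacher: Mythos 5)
Your proposal is correct and takes essentially the same approach as the paper: the paper also applies Zorn's lemma to the family of $\fB$-filters containing $F$ (stated there as $F\sbe H\cap|\fA|$, which is equivalent to your $F\sbe H$ since $F\sbe|\fA|$) and omitting $a$, proves primeness of the maximal element $G$ by the same two-extension argument (using residuation via $x\ra a,\,y\ra a\in G$ where you use distributivity), and obtains maximality in $\noinB{a}$ by your exact observation that any prime filter above $G$ omitting $a$ already belongs to $\Phi$. Part 2 is likewise read off by taking $\fB=\fA$.
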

\begin{proof}
	We define the set:
	\[
	\Phi:=\set{H}{\text{$H$ is a $\fB$-filter, $F\subseteq H\cap|\fA|$ and $a\notin H$}}
	\]
	The set $\Phi$ is nonempty, for the filter $[F)_{\mathfrak{B}}$ belongs to it. Also, it is clear that the set $\Phi$ satisfies the condition of Zorn's lemma. Let $G$ be a maximal filter from $\Phi$.
	By definition, $a\notin G$ and $F\subseteq G\cap|\fA|$. It remains to show that $G\in\SA$. After proving that, we will easily conclude that $G\in\max\noinB{a}$.
	
	For contradiction, assume that for some elements $x$ and $y$ of $\fA$,
	$x\lor y\in G$ but neither $x\in G$ nor $y\in G$. Next, we define
	$H_1:=[G\cup\lbrace x\rbrace)_{\mathfrak{B}}$ and $H_2:=[G\cup\lbrace y\rbrace)_{\mathfrak{B}}$. We aim to show that either $a\notin H_1$ or $a\notin H_2$. For contradiction, we suppose that $a\in H_1$ and $a\in H_2$. This implies that for some elements $u$ and $v$ of $G$, $u\le x\rightarrow a$ and
	$v\le y\rightarrow a$. Both inequalities imply that both
	$x\rightarrow a\in G$ and $y\rightarrow a\in G$ are true and hence, by premise, $a\in G$. A contradiction. Thus either $a\notin H_1$ or $a\notin H_2$.
	Let us take the first as true; that is $a\notin H_1$. Then, since $G\subset H_1$, which implies that $F\subseteq H_{1}\cap|\fA|$, and $a\notin H_1$, $G$ is not maximal in $\Phi$. A contradiction. Similarly, we get a contradiction, if we start with the assumption that $a\notin H_2$. Thus $G\in\SB$ and hence $G\in\noinB{a}$. If $H\in\noinB{a}$ and $G\subseteq H$, then, by definition of $G$, $H=G$. This implies that $G\in\max\noinB{a}$
\end{proof}

\begin{cor}\label{C:on-prime-filter-1}
	Let $\fA$ and $\fB$ be Heyting algebras such that $\fA\suba\fB$ and let $a\in|\fA|$. Then
	$\set{G\cap|\fA|}{G\in\max\noinB{a}}\subseteq\max\noinA{a}$.
\end{cor}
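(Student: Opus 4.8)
The plan is to fix an arbitrary $G\in\max\noinB{a}$, set $F:=G\cap|\fA|$, and verify the two things the inclusion demands: that $F\in\noinA{a}$ and that $F$ is actually maximal there. The first is routine and I would dispatch it immediately: the trace $G\cap|\fA|$ of a prime filter of $\fB$ is a prime filter of $\fA$ (it is plainly a filter, it is proper since $\ze\notin G$, and primeness descends because $x\ve y\in G\cap|\fA|$ with $x,y\in|\fA|$ forces $x\in G$ or $y\in G$, hence $x\in F$ or $y\in F$); and $a\notin G$ together with $a\in|\fA|$ gives $a\notin F$. So $F\in\noinA{a}$, and all the content lies in maximality. (The degenerate case $a=\on$ is vacuous, since then $\noinB{a}=\emptyset$.)

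For maximality I would argue by contradiction. Suppose $F$ is not maximal in $\noinA{a}$; then there is a prime filter $F'$ of $\fA$ with $a\notin F'$ and $F\subsetneq F'$. Form the $\fB$-filter $H:=[G\cup F')_{\mathfrak{B}}$. The decisive step is to show that $a\notin H$. Since $G$ and $F'$ are both filters, $a\in H$ would mean $g\we f'\le a$, i.e. $g\le f'\ra a$, for some $g\in G$ and $f'\in F'$; by upward closure this would force $f'\ra a\in G$. But $f'\in F'\sbe|\fA|$ and $a\in|\fA|$, so, the implication being computed the same way in $\fA$ and in $\fB$, we have $f'\ra a\in|\fA|$, whence $f'\ra a\in G\cap|\fA|=F\sbe F'$. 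Then $f'\we(f'\ra a)\in F'$ and $f'\we(f'\ra a)\le a$, so $a\in F'$, contradicting $a\notin F'$. Hence $a\notin H$.

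It then remains to turn $H$ into a competitor for $G$. Choosing any $f'\in F'\setminus F$ (possible since $F\subsetneq F'$), we get $f'\in H$ while $f'\notin G$, because $f'\in G$ would put $f'\in G\cap|\fA|=F$; thus $G\subsetneq H$. Since $H$ is a proper $\fB$-filter omitting $a$, the ideal $\set{x}{x\le a}$ is disjoint from $H$, and a Zorn's lemma separation — exactly the construction carried out in the proof of Proposition~\ref{P:on-prime-filter} — extends $H$ to a prime filter $G'\in\noinB{a}$ with $H\sbe G'$. Then $G\subsetneq G'$ with $G'\in\noinB{a}$, contradicting the maximality of $G$ in $\noinB{a}$. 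Therefore $F$ is maximal in $\noinA{a}$, which is the claim. I expect the step $a\notin H$ to be the crux: it is where the subalgebra relation $\fA\suba\fB$ is used essentially, via the observation that $f'\ra a$ stays inside $|\fA|$ and therefore necessarily lands in $F=G\cap|\fA|$.
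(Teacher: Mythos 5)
Your proof is correct and takes essentially the same route as the paper's: both form the filter $[G\cup F')_{\mathfrak{B}}$ over $\fB$, exclude $a$ from it via the residuation step $u\we v\le a \Rightarrow v\ra a\in G\cap|\fA|\sbe F'$, and then invoke Proposition~\ref{P:on-prime-filter} (part 2) to obtain a prime filter in $\noinB{a}$ strictly containing $G$, contradicting the maximality of $G$. The only cosmetic difference is that the paper runs the argument with an arbitrary $\fA$-filter strictly containing $G\cap|\fA|$, whereas you take it prime, which changes nothing essential.
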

\begin{proof}
	Assume that $G\in\max\noinB{a}$. It is obvious that $G\cap|\fA|\in\noinA{a}$. Let us take any $\fA$-filter $F$ with $G\cap|\fA|\subset F$. For contradiction, assume that $a\notin F$. Then, we define a $\fB$-filter $[G\cup F)_{\mathfrak{B}}$. We note that $G\subset[G\cup F)_{\mathfrak{B}}$. For contradiction, assume that $a\in[G\cup F)_{\mathfrak{B}}$.
	Then there exist elements $u\in G$ and $v\in F$ such that $u\wedge v\le a$. This implies that $v\rightarrow a\in G$ and hence, by premise, that $v\rightarrow a\in F$, that is $a\in F$. A contradiction. Thus $a\notin[G\cup F)_{\mathfrak{B}}$.
	Then, by virtue of Proposition~\ref{P:on-prime-filter} (part 2), there is a filter $H\in\max\noinB{a}$ such that $[G\cup F)_{\mathfrak{B}}\subseteq H$. It is obvious that $G\subset H$. A contradiction. Thus $a\in F$. Hence, $G\cap|\fA|\in\max\noinA{a}$.
\end{proof}
\begin{cor}\label{C:on-prime-filter-2}
	Let $\fA$ and $\fB$ be Heyting algebras such that $\fA\suba\fB$ and let $a\in|\fA|$. For any $\fB$-filter $G$ with $G\cap|\fA|\in\max\noinA{a}$, there is a filter $H\in\max\noinB{a}$ such that $G\subseteq H$ and $H\cap|\fA|=G\cap|\fA|$.
\end{cor}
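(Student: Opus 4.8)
The plan is to build $H$ by a Zorn's lemma argument of exactly the kind used in Propositions~\ref{P:maksimova-lemma-5} and~\ref{P:on-prime-filter}, but carrying along the extra requirement that the trace on $|\fA|$ stay fixed. Write $P=G\cap|\fA|$; since $P\in\max\noinA{a}$ we have $P\in\SA$ and $a\notin P$, whence $a\notin G$. First I would introduce
\[
\Phi=\set{H}{H~\text{is a $\fB$-filter},~G\sbe H,~\text{and}~H\cap|\fA|=P},
\]
which contains $G$ and so is nonempty. For a $\sbe$-chain $\lbrace H_i\rbrace$ in $\Phi$ its union is a $\fB$-filter with $\bigl(\bigcup_i H_i\bigr)\cap|\fA|=\bigcup_i(H_i\cap|\fA|)=P$, so $\Phi$ meets the hypothesis of Zorn's lemma; let $H$ be a maximal element. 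Note that each member of $\Phi$ is automatically proper and omits $a$, because its trace on $|\fA|$ is $P$ and $a\in|\fA|\setminus P$.

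Next I would prove $H\in\SB$, which is the crux. Assuming $H$ is not prime, pick $x\ve y\in H$ with $x\notin H$, $y\notin H$, and set $H_1=[H\cup\lbrace x\rbrace)_{\mathfrak{B}}$ and $H_2=[H\cup\lbrace y\rbrace)_{\mathfrak{B}}$. Since $x\notin H$ and $y\notin H$ give $H\subset H_1$ and $H\subset H_2$, maximality of $H$ forces $H_1,H_2\notin\Phi$; because $G\sbe H_i$ and $P\sbe H_i\cap|\fA|$ hold automatically, the only possible cause is $H_i\cap|\fA|\neq P$, yielding $z_1\in(H_1\cap|\fA|)\setminus P$ and $z_2\in(H_2\cap|\fA|)\setminus P$. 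From $z_1\in[H\cup\lbrace x\rbrace)_{\mathfrak{B}}$ I get $h_1\we x\le z_1$ for some $h_1\in H$, hence $x\ra(z_1\ve z_2)\in H$, and symmetrically $y\ra(z_1\ve z_2)\in H$; using $(x\ve y)\ra(z_1\ve z_2)=\bigl(x\ra(z_1\ve z_2)\bigr)\we\bigl(y\ra(z_1\ve z_2)\bigr)$ together with $x\ve y\in H$ gives $z_1\ve z_2\in H\cap|\fA|=P$. As $P$ is prime this forces $z_1\in P$ or $z_2\in P$, a contradiction. Thus $H$ is a prime filter with $a\notin H$, i.e. $H\in\noinB{a}$.

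It then remains to check $H\in\max\noinB{a}$. Given $H'\in\noinB{a}$ with $H\sbe H'$, the trace $H'\cap|\fA|$ lies in $\SA$ (this is what makes the map $\varphi$ well defined) and omits $a$, so $H'\cap|\fA|\in\noinA{a}$ while $P=H\cap|\fA|\sbe H'\cap|\fA|$; maximality of $P$ in $\noinA{a}$ gives $H'\cap|\fA|=P$, so $H'\in\Phi$, and maximality of $H$ in $\Phi$ yields $H=H'$. Together with $G\sbe H$ and $H\cap|\fA|=P$ this is the assertion. I expect the genuine difficulty to be concentrated in the primeness step: the delicate part is the bookkeeping that turns failure of maximality into strict growth of the traces $H_i\cap|\fA|$ beyond $P$, and the residuation/modus-ponens computation transporting $z_1\ve z_2$ back inside $H$; once $z_1\ve z_2\in P$ is secured, primeness of $P$ closes everything at once, and the final maximality paragraph is routine.
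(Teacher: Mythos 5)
Your proof is correct, but it takes a genuinely different and more labor-intensive route than the paper. The paper treats the statement as a true corollary of Proposition~\ref{P:on-prime-filter}: since $a\notin G$, part 2 of that proposition (applied inside $\fB$, with $\fA=\fB$) immediately yields some $H\in\max\noinB{a}$ with $G\sbe H$; then, because $H\cap|\fA|$ is a prime $\fA$-filter omitting $a$ --- that is, an element of $\noinA{a}$ containing $G\cap|\fA|$ --- the maximality of $G\cap|\fA|$ in $\noinA{a}$ forces $H\cap|\fA|=G\cap|\fA|$, and that is the entire proof. You instead rebuild the Zorn construction from scratch, baking the constraint $H\cap|\fA|=P$ into the family $\Phi$, re-prove primeness of the maximal element by the split argument with $H_1=[H\cup\lbrace x\rbrace)_{\mathfrak{B}}$ and $H_2=[H\cup\lbrace y\rbrace)_{\mathfrak{B}}$ (which essentially replicates the proof of Proposition~\ref{P:maksimova-lemma-5}), and then deduce maximality of $H$ in $\noinB{a}$ from maximality of $P$. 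Every step checks out: the chain condition, the automatic omission of $a$ by members of $\Phi$, the residuation computation turning $h_1\we x\le z_1$ into $x\ra(z_1\ve z_2)\in H$ together with the identity $(x\ve y)\ra w=(x\ra w)\we(y\ra w)$, and the final maximality paragraph. The trade-off: your argument is self-contained and shows that a maximal \emph{trace-preserving} extension is automatically prime and maximal in $\noinB{a}$, whereas the paper shows, more cheaply, that any maximal extension in $\noinB{a}$ automatically preserves the trace; in a paper that has already established Propositions~\ref{P:maksimova-lemma-5} and~\ref{P:on-prime-filter}, the latter is clearly the efficient choice, and your expectation that the genuine difficulty lies in the primeness step is true only of your route --- the paper has factored that difficulty out into the earlier propositions, making the corollary nearly immediate.
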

\begin{proof}
	Let $G$ be a $\fB$-filter and $G\cap|\fA|\in\max\noinA{a}$. The latter in particular implies that $a\notin G$. According to Proposition~\ref{P:on-prime-filter} (part 2), there is a filter $H\in\max\noinB{a}$ such that $G\subseteq H$. The letter implies that
	$G\cap|\fA|\subseteq H\cap|\fA|$. If it were the case that 
	$G\cap|\fA|\subset H\cap|\fA|$, then, by premise, we would have that $a\in H\cap|\fA|$. A contradiction.
\end{proof}
\begin{cor}\label{C:axiliary-two}
Let $\fA$ and $\fB$ be Heyting algebras with $\fA\suba\fB$. Also, let $a\in|\fA|$. Then $\widetilde{\varphi}(\max\noinB{a})=\max\noinA{a}$.
\end{cor}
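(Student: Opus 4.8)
The statement is an equality of two sets of $\fA$-filters, so the plan is to prove the two inclusions separately, reading off each one from the transfer corollaries established just above. Recall that by the definition of $\widetilde{\varphi}$ we have
\[
\widetilde{\varphi}(\max\noinB{a})=\set{G\cap|\fA|}{G\in\max\noinB{a}},
\]
so the whole claim is about which maximal $\fA$-filters in $\noinA{a}$ arise as traces $G\cap|\fA|$ of maximal $\fB$-filters in $\noinB{a}$.

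The inclusion $\widetilde{\varphi}(\max\noinB{a})\sbe\max\noinA{a}$ is immediate: it is exactly the content of Corollary~\ref{C:on-prime-filter-1}, which already asserts that $\set{G\cap|\fA|}{G\in\max\noinB{a}}\sbe\max\noinA{a}$. So no work is needed here beyond citing that corollary.

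For the reverse inclusion I would start with an arbitrary $F\in\max\noinA{a}$ and manufacture a witness $H\in\max\noinB{a}$ with $H\cap|\fA|=F$. Since $F\in\max\noinA{a}$ we have $a\notin F$ and $F\in\SA$, so Proposition~\ref{P:maksimova-lemma-5} supplies a prime $\fB$-filter $G\in\noinB{a}$ with $G\cap|\fA|=F$. The point to watch is that this $G$ is only known to avoid $a$, not to be \emph{maximal} in $\noinB{a}$; so it is not yet the witness I want. The lifting step that fixes this is Corollary~\ref{C:on-prime-filter-2}: applied to $G$, whose trace $G\cap|\fA|=F$ lies in $\max\noinA{a}$ by hypothesis, it yields a filter $H\in\max\noinB{a}$ with $G\sbe H$ and $H\cap|\fA|=G\cap|\fA|=F$. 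Then $F=\varphi(H)\in\widetilde{\varphi}(\max\noinB{a})$, completing the inclusion $\max\noinA{a}\sbe\widetilde{\varphi}(\max\noinB{a})$.

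The only genuine obstacle is precisely this passage from ``a prime $\fB$-filter over $F$ avoiding $a$'' to ``a \emph{maximal} such filter with the same trace on $\fA$''; enlarging $G$ inside $\noinB{a}$ could in principle change its intersection with $|\fA|$, and it is Corollary~\ref{C:on-prime-filter-2} that guarantees the trace is preserved. With that corollary in hand the argument is a two-line chase, so the proof is essentially the combination of Proposition~\ref{P:maksimova-lemma-5} with Corollaries~\ref{C:on-prime-filter-1} and~\ref{C:on-prime-filter-2}.
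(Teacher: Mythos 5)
Your proof is correct and essentially the paper's own: both split the equality into two inclusions, quote Corollary~\ref{C:on-prime-filter-1} verbatim for $\widetilde{\varphi}(\max\noinB{a})\sbe\max\noinA{a}$, and obtain the converse by the same Zorn-based lifting machinery. The only difference is that the paper lifts more directly — it forms $[F)_{\mathfrak{B}}$, whose trace on $|\fA|$ is $F$, and invokes Proposition~\ref{P:on-prime-filter} to land in $\max\noinB{a}$ — so your preliminary appeal to Proposition~\ref{P:maksimova-lemma-5} for a prime filter with exact trace is sound but dispensable, since Corollary~\ref{C:on-prime-filter-2} (or Proposition~\ref{P:on-prime-filter} combined with the maximality of $F$) already applies to the non-prime filter $[F)_{\mathfrak{B}}$.
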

\begin{proof}
Assume that $G\in\max\noinB{a}$. Then, according to Corollary~\ref{C:on-prime-filter-1}, $\varphi(G)\in\max\noinA{a}$.
Now, we suppose that $F\in\max\noinA{a}$. Let us form the filter $[F)_{\mathfrak{B}}$. We observe that $a\notin[F)_{\mathfrak{B}}$ and
$[F)_{\mathfrak{B}}\cap|\fA|=F$. In virtue of Proposition~\ref{P:on-prime-filter}, there is $G\in\max\noinB{a}$ such that $G\cap|\fA|=F$, that is $F\in\widetilde{\varphi}(\max\noinB{a})$.
\end{proof}

\begin{prop}\label{P:axiliary}
Let $\fA$ and $\fB$ be Heyting algebras with $\fA\suba\fB$. Also, let $a\in|\fA|$ and $(a,a^{\ast})$	be an {\cE}-pair in $\fB$. Then
$\widetilde{\varphi}(\hB{\astar})=\delta\hA{a}$.
\end{prop}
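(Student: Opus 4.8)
The plan is to assemble the identity from facts already established about the operation $\de{\cdot}$, the Stone embedding, and the image map $\widetilde{\varphi}$. The crucial observation is that, although $\astar$ lives in $|\fB|$ and need not belong to $|\fA|$ (so Corollary~\ref{C:axiliary-one} cannot be applied to $\hB{\astar}$ directly), the hypothesis that $(a,\astar)$ is an $\cE$-pair in $\fB$ lets me re-express $\hB{\astar}$ entirely in terms of the element $a$, which does lie in $|\fA|$ and to which the transfer corollaries of this subsection apply.

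First I would apply Proposition~\ref{P:delta-h(x)-2} to the algebra $\fB$: since $a$ is enriched in $\fB$ with $\astar$, this gives $\hB{\astar}=\de{\hB{a}}$. Then Proposition~\ref{P:delta-h(x)-1}, again for $\fB$, rewrites the right-hand side as $\hB{a}\cup\max\noinB{a}$. Thus the argument of $\widetilde{\varphi}$ splits into a ``Stone-image'' part $\hB{a}$ and a ``maximal non-membership'' part $\max\noinB{a}$, both anchored at $a\in|\fA|$. Next I would push $\widetilde{\varphi}$ through the union using property~\eqref{E:axiliary}, obtaining $\widetilde{\varphi}(\hB{\astar})=\widetilde{\varphi}(\hB{a})\cup\widetilde{\varphi}(\max\noinB{a})$; here I should remark that $\widetilde{\varphi}$ is a direct-image map, so it commutes with the union even though $\max\noinB{a}$ is not itself upward-closed, while the whole union $\hB{a}\cup\max\noinB{a}$ is legitimate since it equals $\de{\hB{a}}$.

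The two pieces are then evaluated by the transfer results of this subsection. Corollary~\ref{C:axiliary-one} gives $\widetilde{\varphi}(\hB{a})=\hA{a}$ (this is where I use $a\in|\fA|$), and Corollary~\ref{C:axiliary-two} gives $\widetilde{\varphi}(\max\noinB{a})=\max\noinA{a}$. Combining, $\widetilde{\varphi}(\hB{\astar})=\hA{a}\cup\max\noinA{a}$, and a final appeal to Proposition~\ref{P:delta-h(x)-1}, this time for $\fA$, identifies the right-hand side with $\de{\hA{a}}$, that is $\delta\hA{a}$, as desired. I expect no serious obstacle: once the lemmas are in place the proof is essentially bookkeeping, and the only step requiring genuine thought is the first one --- recognizing that the enrichment hypothesis in $\fB$ is exactly what converts $\hB{\astar}$ into $\de{\hB{a}}$ and thereby reduces everything to the element $a$ of the smaller algebra, on which the transfer corollaries can act.
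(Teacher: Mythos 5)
Your proposal is correct and follows essentially the same route as the paper's own computation: rewrite $\hB{\astar}$ as $\hB{a}\cup\max\noinB{a}$ via Propositions~\ref{P:delta-h(x)-2} and~\ref{P:delta-h(x)-1}, distribute $\widetilde{\varphi}$ over the union by~\eqref{E:axiliary}, evaluate the two pieces by Corollaries~\ref{C:axiliary-one} and~\ref{C:axiliary-two}, and reassemble with Proposition~\ref{P:delta-h(x)-1} for $\fA$. If anything, you are slightly more careful than the paper, which cites only Proposition~\ref{P:delta-h(x)-1} for the first step while tacitly using the ``moreover'' clause of Proposition~\ref{P:delta-h(x)-2} that you invoke explicitly.
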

\begin{proof}
	Indeed, we obtain:
\[
\begin{array}{rl}
\widetilde{\varphi}(\hB{\astar}) &= \widetilde{\varphi}(\hB{a}\cup\max\noinB{a})\quad[\text{Proposition~\ref{P:delta-h(x)-1}}]\\
&= \widetilde{\varphi}(\hB{a})\cup\widetilde{\varphi}(\max\noinB{a})
\quad[\text{in virtue of~\eqref{E:axiliary}}]\\
&=\hA{a}\cup\max\noinA{a}\quad[\text{Corollaries~\ref{C:axiliary-one} and~\ref{C:axiliary-two}}]\\
&=\delta\hA{a}\quad[\text{Proposition~\ref{P:delta-h(x)-1}}].
\end{array}	
\]
\end{proof}
\begin{prop}\label{P:F_a-is-included-in-F}
	Let $\fA$ be a Heyting algebra and $a\in|\fA|$. For any filter
	$F\in\noinA{a}$, $F\in\max\noinA{a}$ if, and only if, $F_a\subseteq F$.
\end{prop}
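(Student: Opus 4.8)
The plan is to prove the equivalence by reformulating the inclusion $F_{a}\sbe F$ as a \emph{saturation} condition on the prime filter $F$ and then playing this off against maximality, the essential tool being the extension lemma Proposition~\ref{P:on-prime-filter}. Since $F_{a}=\set{x\ve(x\ra a)}{x\in|\fA|}$, the inclusion $F_{a}\sbe F$ holds precisely when $x\ve(x\ra a)\in F$ for every $x$; and because $F$ is a prime filter, $x\ve(x\ra a)\in F$ is equivalent to the disjunction $x\in F$ or $x\ra a\in F$. Thus the whole statement reduces to showing that, for $F\in\noinA{a}$, the saturation property
\[
(\star)\qquad \text{for every } x\in|\fA|,\quad x\in F \ \text{ or }\ x\ra a\in F
\]
holds if and only if $F\in\max\noinA{a}$.

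For the direction $(\star)\Rightarrow F\in\max\noinA{a}$ I would argue directly. Suppose $F\psb F'$ for some $F'\in\noinA{a}$ and choose $x\in F'\setminus F$. By $(\star)$ we get $x\ra a\in F\sbe F'$, so $F'$ contains both $x$ and $x\ra a$; since $x\we(x\ra a)\le a$, upward closure of $F'$ forces $a\in F'$, contradicting $F'\in\noinA{a}$. Hence $F$ admits no proper extension inside $\noinA{a}$ and is maximal. This half needs only the adjunction inequality $x\we(x\ra a)\le a$ together with the reformulation above.

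The converse $F\in\max\noinA{a}\Rightarrow(\star)$ is where the real difficulty sits, and I expect it to be the main obstacle. Arguing contrapositively, assume $(\star)$ fails, so there is an $x$ with $x\notin F$ and $x\ra a\notin F$. Consider the filter $[F\cup\lbrace x\rbrace)_{\mathfrak{A}}$, which strictly contains $F$ because $x\notin F$. The key point is that $a\notin[F\cup\lbrace x\rbrace)_{\mathfrak{A}}$: were $a$ in it, we would have $u\we x\le a$, i.e.\ $u\le x\ra a$, for some $u\in F$, whence $x\ra a\in F$ --- contrary to assumption. Now Proposition~\ref{P:on-prime-filter} (part 2, the case $\fB=\fA$) applies to the $\fA$-filter $[F\cup\lbrace x\rbrace)_{\mathfrak{A}}$ and yields a prime filter $G\in\max\noinA{a}$ with $[F\cup\lbrace x\rbrace)_{\mathfrak{A}}\sbe G$. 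Then $F\psb G$ while $a\notin G$, contradicting the maximality of $F$; so $(\star)$ must hold.

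The only delicate step is this last one: it is crucial that the extension lemma is stated for an arbitrary $\fA$-filter avoiding $a$, not merely a prime one, so that it can be applied to the non-prime filter $[F\cup\lbrace x\rbrace)_{\mathfrak{A}}$. Granting that, the remainder of both directions is routine manipulation of filters and the residuation law, and the proof closes.
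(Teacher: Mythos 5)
Your proposal is correct and takes essentially the same route as the paper's proof: the easy direction is the same primeness argument via $x\we(x\ra a)\le a$, and the hard direction, like the paper's, constructs a filter extension of $F$ avoiding $a$ and applies Proposition~\ref{P:on-prime-filter} (part 2) to obtain a strictly larger element of $\max\noinA{a}$, contradicting maximality. Your saturation reformulation $(\star)$ merely streamlines the paper's computation --- you adjoin $x$ where the paper adjoins $x\ve(x\ra a)$, so the check that $a\notin[F\cup\lbrace x\rbrace)_{\mathfrak{A}}$ becomes a one-line residuation instead of the paper's longer calculation.
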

\begin{proof}
	First, we note that if $a=\on$, then the proposition is trivially true. Thus we assume that $a\neq\on$.
	
	Suppose $F\in\noinA{a}$, And, for contradiction, assume that  $x\in F_{a}\setminus F$. We notice that $x\ra a\not\in F$ (Proposition~\ref{P:Fa}) and $x\ve(x\ra a)\not\in F$ (since $F$ is a prime filter). Now we define a filter $G:=[F\cup\lbrace x\ve(x\ra a)\rbrace)_{\mathfrak{A}}$. We aim to show that $a\not\in G$. Indeed, if $a$ were in $G$, then for some $y\in F$, we would have $y\we (x\ve(x\ra a))\le a$, that is
	$(x\ve(x\ra a))\ra (y\ra a)=\on$.  The latter implies that
	$(x\ra (y\ra a))\we((x\ra a)\ra (y\ra a))=\on$, which in turn yields that $y\ra (x\ra a)\in F$. 
	However, the latter immediately implies that $x\ra a\in F$. 
	A contradiction. Thus $a\notin G$. Then, in virtue of Proposition~\ref{P:on-prime-filter}, there is a filter
	$H\max\noinA{a}$ such that $G\subseteq H$. Noticing that $F\subset H$, we get a contradiction once again, which completes the proof of inclusion $F_a\subseteq F$. 
	
	Conversely, assume that $F_a\subseteq F$. For contradiction, we suppose that there is a filter $H\in\noinA{a}$ with $F\subset H$.
	Let $x\in H\setminus F$. Since $F_a\subseteq F$, $x\vee(x\rightarrow a)\in $. And, since the filter $F$ is prime, $x\rightarrow a\in F$. This implies that $x\rightarrow a\in H$ and hence $a\in H$. A contradiction.
\end{proof}

\begin{prop}\label{P:meet-F_a}
	Let $\fA\suba\fB$, $a\in|\fA|$ and $(a,\astar)\in\cE_{\mathfrak{B}}$. If $\bigwedge\! F_a$ exists in $\fB$, where $F_a:=\set{x\in|\fA|}{x\rightarrow a\le x}$, then
	$\bigwedge\! F_a=\astar$ in $\fB$.
\end{prop}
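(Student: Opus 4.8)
The plan is to establish the two inequalities $\astar\le\bigwedge\! F_a$ and $\bigwedge\! F_a\le\astar$ separately, the first being routine and the second carrying all the difficulty. For the first, I would recall from Corollary~\ref{C:Fa} that every element of $F_a$ has the form $x\ve(x\ra a)$ with $x\in|\fA|\sbe|\fB|$; condition (c) of the $\cE$-pair $(a,\astar)$ then gives $\astar\le x\ve(x\ra a)$. Hence $\astar$ is a lower bound of $F_a$ in $\fB$, and so $\astar\le\bigwedge\! F_a$.

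Write $m=\bigwedge\! F_a$. To obtain the reverse inequality I would show that $m$ itself enriches $a$ in $\fB$ and then invoke the uniqueness of enrichment (Proposition~\ref{P:unique}) together with $\astar\le m$. Condition (a), $a\le m$, is immediate, since $a\le y$ for every $y\in F_a$ by Proposition~\ref{P:Fa}. Condition (b), $m\ra a=a$, follows from $a\le m\ra a$ and, using $\astar\le m$ together with condition (b) for $\astar$, from $m\ra a\le\astar\ra a=a$. The remaining condition (c), namely $m\le u\ve(u\ra a)$ for every $u\in|\fB|$, is the heart of the matter. Indeed, by Corollary~\ref{C:Fa} applied in $\fB$ the set $\set{y\in|\fB|}{y\ra a\le y}$ is a filter of $\fB$; since $\astar\ra a=a\le\astar$ it contains $\astar$, and by condition (c) for $\astar$ it has $\astar$ as its least element, so it is the principal filter of $\astar$. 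Thus condition (c) for $m$ is literally equivalent to $m\le\astar$, which is the inequality we are after.

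For condition (c) I would pass to the injective Stone embedding $\hB{\cdot}$ and use $\hB{\astar}=\de{\hB{a}}=\hB{a}\cup\max\noinB{a}$, which is Propositions~\ref{P:delta-h(x)-1} and~\ref{P:delta-h(x)-2}. It then suffices to prove that every $G\in\SB$ with $m\in G$ and $a\notin G$ lies in $\max\noinB{a}$. From $m\in G$ and the fact that $m$ is a lower bound of $F_a$ one gets $F_a\sbe G$, whence $G\cap|\fA|\in\max\noinA{a}$ by Proposition~\ref{P:F_a-is-included-in-F}; Corollaries~\ref{C:on-prime-filter-1} and~\ref{C:on-prime-filter-2} then attach to $G$ a filter $H\in\max\noinB{a}$ with $G\sbe H$ and $H\cap|\fA|=G\cap|\fA|$.

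The main obstacle is precisely this last step: upgrading maximality of the trace $G\cap|\fA|$ in $\fA$ to maximality of $G$ itself in $\fB$, i.e. excluding a \emph{proper} extension $G\subset H$ that shares the $\fA$-trace of $G$. Such non-maximal prime filters containing $F_a$ do exist in general, so the naive identity $\hB{m}=\bigcap_{y\in F_a}\hB{y}$ is false — the right-hand side is strictly larger than $\hB{\astar}$ — which shows that the Stone embedding does \emph{not} preserve this (infinitary) meet. It is exactly here that the hypothesis that $\bigwedge\! F_a$ genuinely exists in $\fB$ must be used: it is the greatest-lower-bound property of $m$, and not the mere inclusion $F_a\sbe G$, that has to force any prime filter containing $m$ and avoiding $a$ to be maximal. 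I expect the decisive argument to combine the transfer corollaries above with the relative pseudocomplement identities of this section (notably $(y\ve(y\ra a))\ra a=a$ from Proposition~\ref{P:Fa}) applied to a witness $w\in H\setminus G$, so as to contradict that $m$ is the greatest lower bound of $F_a$ unless $G=H$; turning this expectation into a clean contradiction is the step I would budget the most care for.
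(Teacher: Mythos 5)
Your preparatory work is sound and, where it overlaps, matches the paper's: the inequality $\astar\le m$ via condition (c) of the $\cE$-pair and Corollary~\ref{C:Fa}; conditions (a) and (b) of enrichment for $m$; and the observation that $\set{y\in|\fB|}{y\ra a\le y}$ is the principal filter $[\astar)$, so that the whole proposition reduces to condition (c), i.e.\ to $m\le\astar$. You are also right that the crux is upgrading maximality of the trace $G\cap|\fA|$ in $\SA$ to maximality of $G$ itself in $\SB$, and right that this upgrade fails in general. But your proposal stops exactly there: you never produce the ``clean contradiction'' from the existence of $\bigwedge F_a$, and you say so yourself. That is a genuine gap at the one step that carries all the content.

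Moreover, the missing step cannot be supplied, because the proposition as stated is false. Take $\fB$ to be the three-element chain $\ze<c<\on$ and $\fA$ the subalgebra with universe $\lbrace\ze,\on\rbrace$ (it is closed under $\ra$, since $\on\ra\ze=\ze$ and $\ze\ra\ze=\on$). With $a=\ze$ one checks that $(\ze,c)\in\cE_{\mathfrak{B}}$: indeed $c\ra\ze=\ze$ and $c\le x\ve(x\ra\ze)$ for each $x\in\lbrace\ze,c,\on\rbrace$. Here $F_a=\set{x\in|\fA|}{x\ra\ze\le x}=\lbrace\on\rbrace$, so $\bigwedge F_a=\on$ exists in $\fB$, yet $\on\neq c=\astar$. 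The prime filter $[\on)=\lbrace\on\rbrace$ of $\fB$ contains $F_a$ and avoids $a$ but is not maximal in $\noinB{a}$ --- precisely the phenomenon you flagged, realized concretely. It is instructive that the paper's own proof founders at the same spot: writing $u:=\bigwedge F_a$, it correctly establishes $\astar\le u$, assumes $u\not\le\astar$, extends $[u)_{\mathfrak{B}}$ to a single $G\in\max\noinB{a}$ via Proposition~\ref{P:on-prime-filter} and gets $\astar\in G$ via Proposition~\ref{P:F_a-is-included-in-F}, and then asserts $u<u\ve\astar$ --- which is impossible, since $\astar\le u$ forces $u\ve\astar=u$; in the chain example every intermediate claim of that proof holds while the final ``contradiction'' is simply false. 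What does survive, and what Corollary~\ref{C:meet-F_a-2} actually needs, is the special case $\fB=\HA{\SA}$ with $\astar=\de{\hA{a}}$: there the infinitary meet is literal intersection, and $\bigcap\set{\hA{x}}{x\in F_a}=\set{F\in\SA}{F_a\sbe F}=\hA{a}\cup\max\noinA{a}=\de{\hA{a}}$ by Propositions~\ref{P:F_a-is-included-in-F} and~\ref{P:delta-h(x)-1} (using $a\le x$ for $x\in F_a$ to handle the filters containing $a$). If you want a correct statement, prove that special case directly; for general $\fA\suba\fB$ with an $\cE$-pair in $\fB$, no argument will close your gap.
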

\begin{proof}
Let us denote $u:=\bigwedge\! F_a$. We note that $\astar\le x$, for every $x\in F_a$ (Section~\ref{S:filters}). Hence $\astar\le u$. For contradiction,  assume that $u\not\le\astar$. Then $a\notin[u)_{\mathfrak{B}}$. In virtue of Propositional~\ref{P:on-prime-filter} (part 2), there is a filter $G\in\max\noinB{a}$ such that $[u)_{\mathfrak{B}}\subseteq G$. According to Proposition~\ref{P:F_a-is-included-in-F}, we obtain that
$[\astar)_{\mathfrak{B}})\subseteq G$, which implies that, on the one hand $u\vee\astar$ is a lower bound of $F_a$, and, on the other, $u<u\vee \astar$. 
\end{proof}

\begin{cor}\label{C:meet-F_a-1}
	Let $\fA\suba\fB$, $a\in|\fA|$ and $(a,\astar)\in\cE_{\mathfrak{B}}$. Then the equality
	$\bigwedge\set{\hB{x}}{x\in F_a}=\hB{\astar}$ in $h_{\mathfrak{B}}[\fB]$, where
	$F_a:=\set{x\in|\fA|}{x\rightarrow a\le x}$.
\end{cor}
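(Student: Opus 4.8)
The plan is to deduce the corollary from Proposition~\ref{P:meet-F_a} by transporting the meet through the Stone embedding. First I would stress that the infimum in the statement is the one internal to the subalgebra $h_{\mathfrak{B}}[\fB]$ (equivalently, via $h_{\mathfrak{B}}^{-1}$, the infimum computed in $\fB$), and \emph{not} the ambient intersection in $\HA{\SB}$. This distinction is the point that must be handled with care: the ambient meet $\bigcap\set{\hB{x}}{x\in F_a}$ equals $\set{G\in\SB}{F_a\sbe G}$, and by Corollary~\ref{C:on-prime-filter-1} together with Proposition~\ref{P:F_a-is-included-in-F} this set contains $\hB{a}\cup\max\noinB{a}=\de{\hB{a}}=\hB{\astar}$ (Propositions~\ref{P:delta-h(x)-1} and~\ref{P:delta-h(x)-2}); but it may be \emph{strictly} larger, since a prime $\fB$-filter $G$ with $F_a\sbe G$ and $a\notin G$ need not itself be maximal in $\noinB{a}$ (it may sit properly inside such a maximal filter sharing its trace on $\fA$, cf.~Corollary~\ref{C:on-prime-filter-2}). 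Hence the equality can only be expected inside $h_{\mathfrak{B}}[\fB]$.

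Once the reading is fixed, the argument is short. Since $h_{\mathfrak{B}}\colon\fB\ra h_{\mathfrak{B}}[\fB]$ is a bijective lattice homomorphism, it is an order isomorphism, and therefore preserves and reflects every infimum that happens to exist: for any family $\set{x_i}{i\in I}\sbe|\fB|$ whose meet exists in $\fB$ one has $\bigwedge_i\hB{x_i}=\hB{\bigwedge_i x_i}$ in $h_{\mathfrak{B}}[\fB]$. Thus it suffices to prove that $\bigwedge F_a=\astar$ in $\fB$.

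That $\astar$ is a lower bound of $F_a$ is immediate: if $x\in F_a$ then $x\ra a\le x$, so $x\ve(x\ra a)=x$, and the enrichment condition~(c) for the $\cE$-pair $(a,\astar)$ in $\fB$ gives $\astar\le x\ve(x\ra a)=x$. The substantive half---that $\astar$ is the \emph{greatest} lower bound---is exactly the content of Proposition~\ref{P:meet-F_a}, whose proof already carries out the relevant prime-filter analysis via Propositions~\ref{P:on-prime-filter} and~\ref{P:F_a-is-included-in-F} and Corollary~\ref{C:on-prime-filter-2}. Combining the two halves yields $\bigwedge F_a=\astar$ in $\fB$, and applying the order isomorphism $h_{\mathfrak{B}}$ gives $\bigwedge\set{\hB{x}}{x\in F_a}=\hB{\astar}$ in $h_{\mathfrak{B}}[\fB]$.

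The main obstacle is therefore not in the corollary itself but in correctly locating where the meet is taken: the entire difficulty of identifying the greatest lower bound is absorbed by Proposition~\ref{P:meet-F_a}, while the new work is the observation that the Stone embedding, being an order isomorphism onto its image, transports that infimum verbatim, together with the cautionary remark that the same equality fails for the ambient meet in $\HA{\SB}$.
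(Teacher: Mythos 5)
Your reduction does not close, because Proposition~\ref{P:meet-F_a} is conditional: \emph{if} $\bigwedge F_a$ exists in $\fB$, \emph{then} it equals $\astar$ --- and nowhere do you discharge the existence hypothesis. The equality asserted by the corollary in $h_{\mathfrak{B}}[\fB]$ presupposes that the infimum of $\set{\hB{x}}{x\in F_a}$ exists there, and by your own order-isomorphism observation this is equivalent to the existence of $\bigwedge F_a$ in $\fB$; since $F_a$ is in general infinite and $\fB$ need not be complete, this is precisely the nontrivial point, and a conditional uniqueness statement cannot supply it. What you have actually proved is only: if the infimum exists, it is $\hB{\astar}$. The paper's proof is organized in the opposite direction exactly in order to secure existence: it transplants the problem into $\HA{\SB}$, where the infimum automatically exists as the intersection $\bigcap\set{\hB{x}}{x\in F_a}$, notes that $(\hB{a},\hB{\astar})$ is an $\cE$-pair in $\HA{\SB}$ (Proposition~\ref{P:delta-h(x)-2}, via $\de{\hB{a}}=\hB{\astar}$), applies Proposition~\ref{P:meet-F_a} with $h_{\mathfrak{B}}[\fA]\suba\HA{\SB}$ playing the role of the subalgebra, concludes the equality \emph{in the ambient algebra} $\HA{\SB}$, and only then observes that it descends to $h_{\mathfrak{B}}[\fB]$, since the intersection lies in that image.

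Your ``cautionary remark'' is therefore the exact opposite of what the paper proves and of what it later needs: in Section~\ref{S:completing-proof}, equation \eqref{E:last} compares $\hB{\astar}$ with $\varphi^{-1}(\de{h_{\mathfrak{A}}(a)})$, an element of $\HA{\SB}$ not known to lie in $h_{\mathfrak{B}}[\fB]$, so an infimum valid only inside the image would be useless there. Moreover, the scenario you describe does not rescue your restricted version: with $F_a$ read relative to $\fA$, the non-maximal prime filters you point at also yield lower bounds of $F_a$ \emph{inside} $\fB$ that are not below $\astar$. Concretely, take $\fA=\lbrace\ze,\on\rbrace$ inside the three-element chain $\fB=\lbrace\ze<c<\on\rbrace$, with $a=\ze$ and $\astar=c$ (one checks directly that $(\ze,c)\in\cE_{\mathfrak{B}}$): then $F_a=\set{x\in|\fA|}{x\ra\ze\le x}=\lbrace\on\rbrace$, so the infimum of $\set{\hB{x}}{x\in F_a}$ in $h_{\mathfrak{B}}[\fB]$ is $\hB{\on}\neq\hB{c}$ --- your in-image equality fails in exactly the situation your remark envisages, which shows that your relocation of the meet does not repair anything (and, taken literally, troubles the $\fA$-relative reading of the statement itself). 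The computation closes only when $F_a$ is taken inside $\fB$, i.e.\ $F_a=\set{x\in|\fB|}{x\ra a\le x}$: then $\bigcap\set{\hB{x}}{x\in F_a}=\set{G\in\SB}{F_a\sbe G}=\hB{a}\cup\max\noinB{a}=\de{\hB{a}}=\hB{\astar}$ by Propositions~\ref{P:F_a-is-included-in-F}, \ref{P:delta-h(x)-1} and~\ref{P:delta-h(x)-2}, and the ambient equality holds outright, as the paper's proof asserts. So both the undischarged existence hypothesis and the location (and relativization) of the meet need to be rethought in your write-up.
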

\begin{proof}
We note that $h_{\mathfrak{B}}[\fB]\suba\HA{\SB}$ and $(\hB{a},\hB{\astar})$ is an $\cE$-pair in $\HA{\SB}$
(Proposition~\ref{P:delta-h(x)-2}). Since $\bigcap\set{\hB{x}}{x\in F_a}$ is the greatest lower bound of $\set{\hB{x}}{x\in F_a}$ in $\HA{\SB}$, then the equality  is true $\bigwedge\set{\hB{x}}{x\in F_a}=\hB{\astar}$ in $\HA{\SB}$ and also in $h_{\mathfrak{B}}[\fB]$.
\end{proof}

\begin{cor}\label{C:meet-F_a-2}
		Let $\fA$ be a Heyting algebra and $a\in|\fA|$. Then
	$\bigwedge\set{\hA{x}}{x\in F_{a}}$ exists in $\de{[\fA_{a}]}$ and
	the equality
	$\de\hA{a}=\bigwedge\set{\hA{x}}{x\in F_{a}}$,  where
	$F_a:=\set{x\in|\fA|}{x\rightarrow a\le x}$, is true in $\delta[\fA_{a}]$.
\end{cor}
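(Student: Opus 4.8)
The plan is to compute the infimum inside the ambient complete Heyting algebra $\HA{\SA}$, where meets are simply intersections of upward-closed sets, and then to observe that the value obtained already lies in the subalgebra $\de{[\fA_{a}]}$, so that it serves as the infimum there as well. Note first that both the family $\set{\hA{x}}{x\in F_{a}}$ and the candidate $\de\hA{a}$ are members of $\de{[\fA_{a}]}$---the former because each $\hA{x}$ belongs to $h_{\mathfrak{A}}(\fA)$, the latter by the very definition of $\de{[\fA_{a}]}$---and that $\de{[\fA_{a}]}\suba\HA{\SA}$.

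First I would unwind the intersection. Since meets in $\HA{\SA}$ are intersections and $\hA{x}=\set{F\in\SA}{x\in F}$, I obtain
\[
\bigcap\set{\hA{x}}{x\in F_{a}}=\set{F\in\SA}{F_{a}\sbe F}.
\]
The crux is then to identify this set with $\de\hA{a}$. By Proposition~\ref{P:delta-h(x)-1} we have $\de\hA{a}=\hA{a}\cup\max\noinA{a}$, so it suffices to establish
\[
\set{F\in\SA}{F_{a}\sbe F}=\hA{a}\cup\max\noinA{a}.
\]
For the inclusion $\supseteq$: if $a\in F$, then $F_{a}\sbe[a)\sbe F$ by Corollary~\ref{C:Fa} (every element of $F_{a}$ lies above $a$); and if $F\in\max\noinA{a}$, then $F_{a}\sbe F$ by Proposition~\ref{P:F_a-is-included-in-F}. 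For $\sbe$: if $F_{a}\sbe F$, then either $a\in F$, so that $F\in\hA{a}$, or $a\notin F$, so that $F\in\noinA{a}$ and Proposition~\ref{P:F_a-is-included-in-F} places $F$ in $\max\noinA{a}$. I expect this double inclusion, which leans entirely on Proposition~\ref{P:F_a-is-included-in-F}, to be the only nonroutine step.

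Finally I would transfer the result to the subalgebra. The computation above shows that $\de\hA{a}=\bigcap\set{\hA{x}}{x\in F_{a}}$ is the greatest lower bound of the family $\set{\hA{x}}{x\in F_{a}}$ in $\HA{\SA}$. Since this greatest lower bound is itself an element of $\de{[\fA_{a}]}$, and $\de{[\fA_{a}]}$ carries the order inherited from $\HA{\SA}$, it remains the greatest lower bound when computed inside $\de{[\fA_{a}]}$: any lower bound of the family belonging to $\de{[\fA_{a}]}$ is, a fortiori, a lower bound in $\HA{\SA}$ and hence lies below $\de\hA{a}$. Therefore $\bigwedge\set{\hA{x}}{x\in F_{a}}$ exists in $\de{[\fA_{a}]}$ and equals $\de\hA{a}$, which is precisely the assertion.
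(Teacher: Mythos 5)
Your proof is correct, and it takes a genuinely different route from the paper's. The paper proves the corollary as a two-line specialization of the abstract Proposition~\ref{P:meet-F_a}: it notes that $h_{\mathfrak{A}}[\fA]\suba\HA{\SA}$, that $(\hA{a},\de{\hA{a}})$ is an $\cE$-pair in $\HA{\SA}$ (Proposition~\ref{P:delta-h(x)-2}), that the meet in $\HA{\SA}$ is the intersection, and then invokes Proposition~\ref{P:meet-F_a} to conclude that the meet equals $\de{\hA{a}}$, finally passing to the subalgebra $\de{[\fA_{a}]}$. You instead bypass Propositions~\ref{P:meet-F_a} and~\ref{P:delta-h(x)-2} entirely and compute directly: $\bigcap\set{\hA{x}}{x\in F_{a}}=\set{F\in\SA}{F_{a}\sbe F}$, and this set equals $\hA{a}\cup\max\noinA{a}=\de{\hA{a}}$ by the two-way use of Proposition~\ref{P:F_a-is-included-in-F} together with Proposition~\ref{P:delta-h(x)-1} and Corollary~\ref{C:Fa}; in effect you inline, in the concrete Stone representation where $\fA=\fB$, the prime-filter content that the paper packages abstractly. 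Both arguments ultimately rest on Proposition~\ref{P:F_a-is-included-in-F}, and both share the skeleton ``compute the glb in the complete ambient algebra, observe it lies in the subalgebra, transfer.'' What the paper's route buys is economy and reuse: Proposition~\ref{P:meet-F_a} is needed anyway (e.g., for Corollary~\ref{C:meet-F_a-1} and equation~\eqref{E:last} in Section~\ref{S:completing-proof}), so the corollary comes almost for free. What your route buys is self-containedness and transparency: the identity becomes the crisp filter-theoretic statement $\set{F\in\SA}{F_{a}\sbe F}=\hA{a}\cup\max\noinA{a}$, with no appeal to the $\cE$-pair machinery, and your explicit order-theoretic justification of the transfer step (a glb computed in $\HA{\SA}$ that happens to lie in $\de{[\fA_{a}]}$ remains the glb there) spells out what the paper's terse final sentence leaves implicit.
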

\begin{proof}
We first derive from the given the following: $h_{\mathfrak{A}}[\fA]\suba\HA{\SA}$, $\hA{a}\in h_{\mathfrak{A}}[\fA]$ and $(\hA{a},\de{\hA{a}})$ is an $\cE$-pair in $\HA{\SA}$ (Proposition~\ref{P:delta-h(x)-2}); also,
$\bigwedge\set{\hA{x}}{x\in F_{a}}=\bigcap\set{\hA{x}}{x\in F_{a}}$
in $\HA{\SA}$. Then, in virtue of Proposition~\ref{P:meet-F_a},
the equality $\bigwedge\set{\hA{x}}{x\in F_{a}}=\de{\hA{a}}$ holds
in $\HA{\SA}$. Since $\de{[\fA_{a}]}\suba\HA{\SA}$, the last equality also holds in $\de{[\fA_{a}]}$.
\end{proof}

\section{Completing the proof of Theorem~\ref{T:main-theorem}}\label{S:completing-proof}

We aim to prove Conjecture~\ref{conjecture}, from which Theorem~\ref{T:main-theorem} will follow straightforwardly.\\

Let $\fA$ and $\fB$ be Heyting algebras such that $\fA\suba\fB$. Gradually, we will be adding more conditions.\\

First, we observe that
\begin{equation}\label{E:tilde-phi-surjective}
\textit{the map $\widetilde{\varphi}$ is surjective.}
\end{equation}

Indeed, this follows from that the map $\varphi$ is surjective (Corollary~\ref{C:maksimova-lemma-5}).\\

Next, we remind the reader that the map $\varphi^{-1}$ is an embedding of $\HA{\SA}$ into $\HA{\SB}$; cf.~\cite{mak72}, Lemma 2.\\

The following observation is obvious: For any $\mathcal{U}\in\HA{\SB}$,
\begin{equation}\label{E:phi^{-1}(tilde-phi(U))}
\mathcal{U}\subseteq\varphi^{-1}(\widetilde{\varphi}(\mathcal{U})).
\end{equation}

Also, it is easy to see that
\begin{equation}\label{E:phi^{-1}(h_A(x)=h_B(x)}
\varphi^{-1}(h_{\mathfrak{A}}(x))=h_{\mathfrak{B}}(x),~\textit{for any $x\in|\fA|$}.
\end{equation}

Indeed, we have:
\[
G\in\varphi^{-1}(h_{\mathfrak{A}}(x))\Longleftrightarrow
G\cap|\fA|\in h_{\mathfrak{A}}(x)\Longleftrightarrow G\in h_{\mathfrak{B}}(x).
\]

Now, assume that $a\in|\fA|$ and $(a,\astar)$ is an $\cE$-pair in $\fB$. Then
\begin{equation}\label{E:h_B(astar)-included-ph^{-1}(delta-h_A(a))}
h_{\mathfrak{B}}(\astar)\subseteq\varphi^{-1}(\de{h_{\mathfrak{A}}(a)}).
\end{equation}

Indeed, with the help of~\eqref{E:phi^{-1}(tilde-phi(U))} and Proposition~\ref{P:axiliary}, we obtain:
\[
h_{\mathfrak{B}}(\astar)\subseteq\varphi^{-1}(\widetilde{\varphi}(h_{\mathfrak{B}}(\astar)))=\varphi^{-1}(\de{h_{\mathfrak{A}}(a)}).
\]

Finally, since $\de{h_{\mathfrak{A}}(a)}$ is a lower bound of the set $\set{h_{\mathfrak{A}}(x)}{x\in F_{a}}$ in $\HA{\SA}$ (Proposition~\ref{P:delta-h(x)-2}), using~\eqref{E:phi^{-1}(h_A(x)=h_B(x)}, we conclude that $\varphi^{-1}(\de{h_{\mathfrak{A}}(a)})$ is a lower bound of the set
$\set{h_{\mathfrak{B}}(x)}{x\in F_{a}}$ in $\HA{\SB}$. And, in virtue of Corollary~\ref{C:meet-F_a-1}, we obtain that 
\begin{equation}\label{E:last}
\varphi^{-1}(\de{h_{\mathfrak{A}}(a)})=h_{\mathfrak{B}}(\astar).
\end{equation}

Further, with the help of \eqref{E:phi^{-1}(h_A(x)=h_B(x)} and \eqref{E:last}, we derive the statement of Conjecture~\ref{conjecture}.\\

At last, using Conjecture~\ref{conjecture} and Corollary~\ref{C:L(A_tau)=L(B_tau)}, we obtain that $L(\fA)=L(\delta[\fA_{a}])$. Thus the condition of Corollary~\ref{C:reducibility-4} is satisfied and hence Theorem~\ref{T:main-theorem} is proven.

\section{Discussion}\label{S:conclusion}
We have proved that, given a Heyting algebra $\fA$, the algebra $\direct$ defines the same equational class as $\fA$ does. (Theorem~\ref{T:main-theorem}) In addition, $\direct$ can inherit some algebraic and cardinality properties, if $\fA$ has them. (Propositions~\ref{P:countable} and~\ref{P:sub-irr}) In view of all these properties, we formulate several open questions, dividing them into two problem sets.\\

\noindent{\textsc{Problem set 1}}\\
(a)~\textit{Is $\direct$ finitely subdirectly irreducible, providing that $\fA$ is?}~\footnote{For definition, see e.g.~\cite{gra79}.}\\
(b)~\textit{Is $\direct$ a double Heyting 
	 algebra (alias bi-Heyting algebra), providing that $\fA$ is?}~\footnote{Double Heyting algebras were studied perhaps for the first time in the doctoral dissertation of C. Rauszer, named there \emph{semi-Boolean algebras}; cf.~\cite{rauszer1971}. Interest in these algebras became especially evident after S. Ghilardi proved in~\cite{ghilardi1992} that every finitely generated free Heyting algebra is a bi-Heyting algebra.}\\
(c)~\textit{Is $\direct$ projective $($weakly projective$)$, providing that $\fA$ is?}~\footnote{See definitions e.g. in~\cite{gra79}.}\\
(d)~\textit{Is $\direct$ finitely approximable, providing that $\fA$ is?}~\footnote{See the definition in~\cite{mal73}, p. 60.}\\

Questions like the ones above can be multiplied; we chose only a few.  \\
 
The other category of questions is related to elementary properties which may be preserved in $\direct$.\\

\noindent{\textsc{Problem set 2}}\\
(a)~\textit{Do $\direct$ and $\fA$ have the same quasi-equational theory?}\\
(b)~\textit{Which elementary properties are preserved in $\direct$?}

%\section*{Acknowledgment}
%I am very grateful to my late friend Leo Esakia who encouraged me to %resume working on this embedding problem in the late 1990s after my %first (unserious) attempt had failed few years before. And then, he %encouraged me once again in 2010, after another (serious) attempt %had also failed. 


\begin{thebibliography}{10}
	\bibitem[Church(1956)]{church1956}
	Church, A. (1956) {\em Introduction to {M}athematical {L}ogic},
	Princeton Univ. Press, Princeton. 
	
%	\bibitem[Esakia(1974)]{esa74}
%	Esakia, L. (1974)
%	Topological {K}ripke models.
%	{\em Soviet Mathematics--Doklady}, 15(1):147--151.
	
	\bibitem[Esakia(2006)]{esakia2006}
	Esakia, L. (2006) The modalized {H}eyting calculus: a conservative modal
	extension of the intuitionistic logic. {\em J. Appl. Non-Classical Logics}, 16(3--4), 349--366.
	
	\bibitem[Gabbay and Maksimova(2005)]{gm05}
	Gabbay, D. and Maksimova, L. (2005)
	{\em Interpolation and {D}efinability: {M}odal and {I}ntuitionistic
		{L}ogics}, volume~46 of {\em Oxford Logic Guides}.
    The Clarendon Press Oxford University Press, Oxford.
	
	\bibitem[Ghilardi(1992)]{ghilardi1992}
	Ghilardi, S. (1992)
	Free {H}eyting algebras as bi-{H}eyting algebras.
	{\em C. R. Math. Rep. Acad. Sci. Canada}, 6:240--244.
	
%	\bibitem[Goldblatt(2006)]{gol06}
%	Goldblatt, R. (2006)
%	Mathematical modal logic: a view on its evolution.
%	D. Gabbay and J. Woods, editors, {\em Handbook of the {H}istory %of {L}ogic}, vol. 6, pp. 1--98. Elsevier.
	
	\bibitem[Gorbunov(1998)]{gor98}
	Gorbunov, V. (1998)
	{\em Algebraic {T}heory of {Q}uasivarieties}.
	Siberian School of Algebra and Logic. Consultants Bureau, New York.
	
	\bibitem[Gr{\"a}tzer(1978)]{gra78}
	Gr{\"a}tzer, G. (1978)
	{\em {G}eneral {L}attice {T}heory}, volume~75 of {\em Pure and
		Applied Mathematics}.
	 Academic Press, Inc. [Harcourt Brace Jovanovich, Publishers], New
	York-London.
	
	\bibitem[Gr{\"a}tzer(1979)]{gra79}
	Gr{\"a}tzer, G. (1979)
	 {\em Universal {A}lgebra}.
	 Springer-Verlag, New York, second edition.
	
%	\bibitem[J\'{o}nsson and Tarski(1951)]{jt1951}
%	J\'{o}nsson, B. and A.~Tarski, A. (1951)
%	Boolean algebras with operators, {I}.
%	{Amer. J. Math.}, 73:891--939.
	
	\bibitem[Kleene(1956)]{kle52}
	Kleene, S.
	{\em Introduction to {M}etamathematics}.
	 D. Van Nostrand Co., Inc., New York, N. Y., 1952.
	
	\bibitem[Kuznetsov(1985)]{kuz85b}
	Kuznetsov, A. (1985)
	The proof-intuitionistic propositional calculus.
	{\em Soviet Mathematics-Doklady}, 32(1):27--30.
	
	\bibitem[Kuznetsov and Muravitsky(1986)]{km86}
	Kuznetsov, A. and Muravitsky, A. (1986)
	\newblock On superintuitionistic logics as fragments of proof logic extensions.
	{\em Studia Logica}, 45(1):77--99.
	
	\bibitem[Lambros(1979)]{lam79}
	Lambros, C. (1979)
	A shortened proof of {S}oboci\'nski's theorem concerning a restricted
	rule of substitution in the field of propositional calculi.
	{\em Notre Dame Journal of Formal Logic}, 20(1):112--114.
	
	\bibitem[Maksimova(1972)]{mak72}
	Maksimova, L. (1972)
	Pretabular superintuitionistic logics.
	{\em Algebra and Logika}, 11:308--314.
	
	\bibitem[ Mal'cev(1973)]{mal73}
     Mal'cev, A. (1973)
	{\em Algebraic {S}ystems}.
	Akademie-Verlag, Berlin.
	 Posthumous edition, edited by D. Smirnov and M. Ta{\u\i}clin,
	Translated from the Russian by B. D. Seckler and A. P. Doohovskoy.
	
	\bibitem[Muravitsky(1985)]{mur85}
	Muravitsky, A. (1985)
	Correspondence of proof-intuitionistic logic extensions to
	proof-logic extensions.
	{\em Soviet Mathematics-Doklady}, 31(2):345--348.
	
	\bibitem[Muravitsky(1988)]{mur86}
	Muravitsky, A. (1988)
	Algebraic proof of the separation property for the
	proof-intuitionistic calculus.
	{\em Mathematics of the USSR-Sbornik}, 59(2):397--406.
	
	\bibitem[Muravitsky(1990)]{mur90}
	Muravitsky, A. (1990)
	Magari and $\Delta$-pseudo-{B}oolean algebras.
	{\em Siberian Mathematical Journal}, 31(4):623--628.
		
	\bibitem[Muravitsky(2008)]{mur08}
	Muravitsky, A. (2008)
	The contribution of {A}. {V}. {K}uznetsov to the theory of modal
	systems and structures.
	{\em Logic and Logical Philosophy}, 17(1-2):41--58.
	
	\bibitem[Muravitsky(2014)]{mur14a}
	Muravitsky, A. (2014)
	Logic {K}{M}: a biography.
	In G.~{B}ezhanishvili, editor, {\em {L}eo {E}sakia on {D}uality in
		{M}odal and {I}ntuitionistic {L}ogics}, pages 155--185. Springer.
	
	\bibitem[Muravitsky(2015a)]{mur15a}
	Muravitsky, A. (2015a)
	Interconnection of the lattices of extensions of four logics.
	{\em Log. Univers.}, 11:253--281.
	
	
	\bibitem[Muravitsky(2015b)]{mur15b}
	Muravitsky, A. (2015b)
	On the equipollence of the calculi {I}nt and {K}{M}.
	{URL}~\url{http://arXiv.or/pdf/1702.00054}.
	
	\bibitem[Rasiowa and Sikorski(1970)]{rs70}
	Rasiowa, H. and Sikorski, R. (1970)
	{\em The {M}athematics of {M}etamathematics}.
	PWN---Polish Scientific Publishers, Warsaw, third edition, 
	Monografie Matematyczne, Tom 41.
	
	\bibitem[Rauszer(1971/1972)]{rauszer1971}
	Rauszer, C. (1971/1972) 
	Representation theorem for semi-Boolean algebras. I, II. {\em Bull. Acad. Polon. Sci. S\'{e}r. Sci. Math. Astronom. Phys.}, 
	19:881--887; ibid, 19:889--892.
	
	\bibitem[Soboci{\'n}ski(1974)]{sob74}
	Soboci{\'n}ski, B. (1974)
	A theorem concerning a restricted rule of substitution in the field
	of propositional calculi. {I}, {II}.
	{\em Notre Dame Journal of Formal Logic}, 15:465--476; ibid. 15
	(1974), 589--597.
	
\end{thebibliography}
\end{document}